\documentclass{amsart}
\usepackage{amsmath,amssymb,amsthm,amsfonts,mathrsfs,stmaryrd}
\usepackage[frame,cmtip,arrow,matrix,line,graph,curve]{xy}
\usepackage{graphpap,color,paralist,pstricks}
\usepackage[mathscr]{eucal}
\usepackage{mathabx}
\usepackage[pdftex,colorlinks,backref=page,citecolor=blue]{hyperref}
\usepackage{tikz-cd}
\usepackage{epic,eepic}
\usepackage{yfonts}
\usepackage{enumerate}
\usepackage{ytableau}
\usepackage{comment}

\theoremstyle{definition}
\newtheorem{theorem}{Theorem}[section]
\newtheorem{definition}[theorem]{Definition}
\newtheorem{conjecture}[theorem]{Conjecture}
\newtheorem{lemma}[theorem]{Lemma}
\newtheorem{proposition}[theorem]{Proposition}
\newtheorem{corollary}[theorem]{Corollary}

\newtheorem*{theorem*}{Theorem}

\theoremstyle{remark}
\newtheorem{remark}[theorem]{Remark}
\newtheorem{example}[theorem]{Example}
\def\PP{\mathbb{P}}
\def\RR{\mathbb{R}}
\def\CC{\mathbb{C}}
\def\ZZ{\mathbb{Z}}
\def\QQ{\mathbb{Q}}
\def\K{\mathbb{K}}

\DeclareMathOperator{\tr}{tr}

\DeclareMathOperator{\sgn}{sgn}
\DeclareMathOperator{\rk}{rk}

\DeclareMathOperator{\vol}{vol}
\DeclareMathOperator{\MV}{V}
\DeclareMathOperator{\D}{D}

\DeclareMathOperator{\NS}{NS}
\DeclareMathOperator{\Id}{Id}
\DeclareMathOperator{\End}{End}
\DeclareMathOperator{\bs}{Bs}

\DeclareMathOperator{\psef}{psef}

\DeclareMathOperator{\Psef}{Psef}
\DeclareMathOperator{\Nef}{Nef}
\DeclareMathOperator{\bg}{Big}
\DeclareMathOperator{\Amp}{Amp}

\DeclareMathOperator{\im}{Im}

\title{Correlation inequalities for volume polynomials}
\author{June Huh}
\address{Princeton University and Korea Institute for Advanced Study.}
\email{huh@princeton.edu}

\author{Mateusz Micha{\l}ek}
\address{
University of Konstanz
}
\email{mateusz.michalek@uni-konstanz.de}

\author{Botong Wang}
\address{University of Wisconsin-Madison.}
\email{wang@math.wisc.edu}

\author{Jian Xiao}
\address{
Tsinghua University}
\email{jianxiao@tsinghua.edu.cn}

\begin{document}
\begin{abstract}
We establish sharp correlation inequalities for volume polynomials associated with covers of multisets. These inequalities simultaneously generalize the Khovanskii--Teissier and reverse Khovanskii--Teissier inequalities for intersections of nef divisor classes on projective varieties. They also hold for volume polynomials arising from convex bodies and Hermitian matrices, recovering and extending classical results such as the Alexandrov--Fenchel inequality for mixed volumes of convex bodies and the Hadamard--Fischer inequality for determinants of positive semidefinite matrices. As a combinatorial application, we prove a negative correlation inequality for partitions of bases of matroids that are linearly realizable over a field.
\end{abstract}
\maketitle

\section{Introduction}
Disjoint sets of edges are negatively correlated in a uniformly random spanning tree of a graph. More precisely, let $G$ be a finite connected graph with edge set $E$, and for $S\subseteq E$, let $\Pr(S)$ be the probability that a uniformly random spanning tree of $G$ contains $S$. Then, for any pairwise disjoint subsets $I_1,\dots,I_m\subseteq E$, we have
\[
\Pr(I_1\sqcup I_2\sqcup\cdots\sqcup I_m)
\leq
\Pr(I_1)\Pr(I_2)\cdots \Pr(I_m).
\]
In particular, for any two distinct edges $e_1$ and $e_2$ of $G$, we have the pairwise negative correlation inequality
\[
\Pr(\{e_1,e_2\})
\leq
\Pr(\{e_1\})\Pr(\{e_2\}).
\]
These inequalities have their origins in Kirchhoff's work on electrical networks; see \cite{Pemantle95}.

A \emph{matroid} $M$ on a finite set $E$ is specified by a nonempty collection of subsets of $E$, called the \emph{bases} of $M$, satisfying the \emph{symmetric basis-exchange property}: 
\begin{quote}
\emph{For any bases $B$ and $B'$ of $M$ and any $i\in B\setminus B'$, there is $i'\in B'\setminus B$ such that both $(B\setminus i)\cup i'$ and $(B'\setminus i' )\cup i$ are bases of $M$.}
\end{quote}
A subset of $E$ is \emph{independent} in $M$ if it is contained in a basis of $M$. 
The matroid $M$ is \emph{linearly realizable} over a field if there is a collection of vectors $(v_e)_{e\in E}$ over that field such that a subset $I\subseteq E$ is independent in $M$ if and only if the corresponding set of vectors is linearly independent. For a comprehensive introduction to the theory of matroids, see \cite{oxley2006matroid}.

The spanning trees of a connected graph are precisely the bases of its graphic matroid \cite[Section 1.2]{oxley2006matroid}. We consider more generally a matroid $M$ on $E$, and for $S\subseteq E$, let $\Pr(S)$ be the probability that a uniformly random basis of $M$ contains $S$. As observed by Seymour and Welsh \cite{SW75}, the pairwise negative correlation inequality 
can fail for non-graphic matroids, even for matroids that are linearly realizable over a field; see \cite{HSW22} for more examples and related discussions. Nevertheless, we prove the following negative correlation inequality for partitions of a basis, which is a special case of Theorem~\ref{thm:matroid}.

\begin{theorem}
\label{thm:intro-basis-partition}
Let $M$ be a matroid that is linearly realizable over a field. For any partition $I_1\sqcup\cdots\sqcup I_m$ of a basis $B$ of $M$, we have
\[
\Pr(I_1 \sqcup I_2 \sqcup \cdots \sqcup I_m)
\leq
\Pr(I_1)\Pr(I_2) \cdots \Pr(I_m).
\]
\end{theorem}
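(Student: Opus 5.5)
The plan is to realize the basis-counting probabilities as intersection numbers of nef divisors on a projective variety. Theorem~\ref{thm:intro-basis-partition} will then follow from a reverse Khovanskii--Teissier type correlation inequality, which is the main geometric result of the paper. That result is stated later; I assume it in the form below.

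\emph{Step 1: geometric model.} Let $M$ have rank $r$ on $E=\{1,\dots,n\}$, realized by vectors $(v_e)_{e\in E}$ over a field. Replacing the field by its algebraic closure does not change the matroid. Let $L\subseteq \K^n$ be the $r$-dimensional row space of the realization, and let $X$ be the closure of $L$ in $(\PP^1)^n$. Let $H_e$ be the pullback to $X$ of the hyperplane class on the $e$-th factor; each $H_e$ is nef and $H_e^2=0$. The key input is the multidegree computation of Ardila--Boocher: for $|S|=r$, $\deg_X\bigl(\prod_{e\in S}H_e\bigr)$ equals $1$ if $S$ is a basis of $M$ and $0$ otherwise. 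Put $D=\sum_e H_e$, write $H_S=\prod_{e\in S}H_e$ and $k=|S|$. Expanding and using $H_e^2=0$ gives
\[
\deg_X\bigl(H_S\cdot D^{r-k}\bigr)=(r-k)!\,\#\{\text{bases } B'\supseteq S\}.
\]
Let $b$ be the number of bases and $b_j$ the number of bases containing $I_j$. Then $\Pr(I_j)=b_j/b$ and $\Pr(B)=1/b$.

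\emph{Step 2: reduction.} With $k_j=|I_j|$ and $\sum_j k_j=r$, the desired inequality is $b^{m-1}\le\prod_j b_j$. In intersection-theoretic terms this reads
\[
\deg(H_{I_1}\cdots H_{I_m})\,\bigl(\deg D^r\bigr)^{m-1}\;\le\;\frac{(r!)^{m-1}}{\prod_j (r-k_j)!}\,\prod_{j=1}^m \deg\bigl(H_{I_j}D^{r-k_j}\bigr).
\]
Here we use $\deg(H_{I_1}\cdots H_{I_m})=\deg H_B=1$, $\deg D^r=r!\,b$, and $\deg\bigl(H_{I_j}D^{r-k_j}\bigr)=(r-k_j)!\,b_j$. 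For $m=2$ the constant is $r!/(k_1!\,k_2!)=\binom{r}{k_1}$, so this is exactly the reverse Khovanskii--Teissier inequality
\[
(\alpha\cdot\beta)\,(D^r)\le\binom{r}{k}(\alpha\cdot D^{r-k})(\beta\cdot D^{k})
\]
for products $\alpha,\beta$ of nef classes of degrees $k$ and $r-k$.

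\emph{Step 3: the general inequality.} For general $m$, I would invoke the paper's main correlation inequality for volume polynomials attached to covers of multisets. It is applied to the volume polynomial of $X$ in the nef classes $H_e$, which involves no field-dependent positivity beyond nefness. Concretely, the cover of the multiset $\{1^{k_1},\dots,m^{k_m}\}$ should be taken so that the sharp constant is the multinomial-type factor $(r!)^{m-1}/\prod_j(r-k_j)!$.

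The main obstacle is precisely this $m$-fold inequality with the sharp constant. Naive iteration of the two-term reverse Khovanskii--Teissier inequality fails. Splitting off $H_{I_1}$ leaves a term $\deg\bigl(H_{I_2\sqcup\cdots\sqcup I_m}D^{k_1}\bigr)$, which then has to be bounded above by a product involving extra powers of $D$. This needs the generalized form in which $D$-powers are distributed across the factors. If only the $m=2$ statement were available, I would first prove the mixed version
\[
(\alpha\cdot\beta\cdot D^{s})\,(D^r)\le c\,(\alpha\cdot D^{r-\deg\alpha})(\beta\cdot D^{r-\deg\beta})
\]
with an explicit constant $c$, via the Alexandrov--Fenchel and Khovanskii--Teissier log-concavity of $t\mapsto \deg(\alpha\cdot D^{t}\cdots)$. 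I would then induct on $m$ and check that the constants telescope to $(r!)^{m-1}/\prod_j(r-k_j)!$.
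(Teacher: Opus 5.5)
Your Steps~1 and~2 are exactly the paper's argument. The paper proves this statement, in fact the $r$-cover version Theorem~\ref{thm:matroid}, using the closure of the realizing subspace in $(\PP^1)^E$, the pulled-back point classes $H_e$, the multidegree formula, and $D=\sum_e H_e$, and then applies Theorem~\ref{thm introduction} directly.

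Your Step~3 should be made concrete, and the constant does not come from the choice of cover. Take the multiset in Theorem~\ref{thm introduction} to be the basis $B$ itself (all multiplicities $1$), with $I=(I_1,\dots,I_m)$ as a $1$-cover, $\beta_e=H_e$ and $\alpha=D$. Then $\binom{B}{I}=1$, $\beta^{[B]}=H_B$ and $\beta^{[I_j]}=H_{I_j}$, so the theorem reads $(H_B)\bigl(D^{[r]}\bigr)^{m-1}\le\prod_j\bigl(H_{I_j}D^{[r-k_j]}\bigr)$. This is your displayed inequality; the factor $(r!)^{m-1}/\prod_j(r-k_j)!$ is just the divided-power normalization. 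Equivalently, Corollary~\ref{thm:intro-volume-polynomials} applied to the basis generating polynomial $\sum_{B'}x^{B'}$ at $x=(1,\dots,1)$ gives $b^{m-1}\le\prod_j b_j$ at once. Only the set case, Proposition~\ref{thrm cover of set}, is needed. The auxiliary multiset $\{1^{k_1},\dots,m^{k_m}\}$ is unnecessary. If you want to use it, the classes must be $\gamma_j=\sum_{e\in I_j}H_e$, which satisfy $\gamma_j^{[k_j]}=H_{I_j}$ because $H_e^2=0$, and the constant is again $1$.

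Your fallback for the case where only $m=2$ is available cannot work. Splitting off $I_1$ by reverse Khovanskii--Teissier gives, in matroid terms, $b\le b_1\,b_{2\cdots m}$, where $b_{2\cdots m}$ is the number of bases containing $I_2\sqcup\cdots\sqcup I_m$. To finish, you would need $b_{2\cdots m}\,b^{m-2}\le b_2\cdots b_m$, i.e.\ $\Pr(I_2\sqcup\cdots\sqcup I_m)\le\prod_{j\ge 2}\Pr(I_j)$ for an independent set that is not a basis.

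\begin{itemize}
\item For $m=3$, $I_2=\{e\}$, $I_3=\{f\}$, this is pairwise negative correlation, which fails for suitable realizable matroids (Seymour--Welsh). So the chain breaks even though the final inequality is true.
\item For general nef classes with $r=3$ and all $k_j=1$, the mixed inequality your telescoping needs is $(\beta_2\beta_3 D)(D^3)\le\tfrac32(\beta_2D^2)(\beta_3D^2)$. This is exactly the inequality refuted in Example~\ref{ex:octahedron}.
\item What reverse Khovanskii--Teissier actually gives in the second step is the constant $\binom{r}{k_2}$. The constants then overshoot the sharp one by the factor $\binom{k_1+k_2}{k_1}$, leaving only $b^2\le\binom{k_1+k_2}{k_1}b_1b_2b_3$.
\end{itemize}

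This matches Example~\ref{ex:new inequality}, which shows the tripartition inequality is not implied by the Lorentzian property together with the reverse Khovanskii--Teissier inequalities. The $m$-fold case genuinely needs Theorem~\ref{thm introduction}, and its restricted-volume proof is where the real content lies.
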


For a bipartition $B=I_1\sqcup I_2$, the inequality 
follows directly from the \emph{multiple symmetric basis-exchange property} of Brylawski \cite{Brylawski}, Greene \cite{Greene}, and Woodall \cite{Woodall}:
\begin{quote}
\emph{For any matroid $M$,  any bases $B$ and $B'$ of $M$, and 
any subset $S \subseteq B$, there is a subset $S' \subseteq B'$ such that both $(B \setminus S) \cup S'$ and $(B' \setminus S') \cup S$ are bases of $M$.}
\end{quote}
Applying this property to $I_1 \subseteq B$ and each basis $B'$ of $M$, we obtain an injective map
\[
\{\text{bases of $M$}\} \hookrightarrow \{\text{bases of $M$ containing $I_1$}\} \times \{\text{bases of $M$ containing $I_2$}\}.
\]
Consequently, the bipartition case of Theorem~\ref{thm:intro-basis-partition} holds for every matroid $M$, whether or not it is linearly realizable over a field.
Our proof of Theorem~\ref{thm:intro-basis-partition}  relies on Theorem~\ref{thm introduction} below and applies only to matroids that are linearly realizable over some field. We refer the reader to Section~\ref{sec:matroids} for a generalization of Theorem~\ref{thm:intro-basis-partition} and its conjectural extension to arbitrary matroids.

An inequality of the same form holds for principal minors of positive
semidefinite matrices. Let $A$ be a positive semidefinite
Hermitian matrix, and let $E$ be the set of row and column indices of $A$. For $S\subseteq E$,
let $A_{\widehat S}$ be the principal submatrix of $A$ with rows and columns indexed
by $E\setminus S$. \emph{Koteljanskii's inequality} \cite[Theorem 7.8.9]{HJ} implies that,
for any partition $I_1\sqcup\cdots\sqcup I_m$ of $E$, we have
\begin{equation*}\label{eq:intro-principal-minors}
\det(A)^{m-1}
\leq 
\det(A_{\widehat{I_1}})\det(A_{\widehat{I_2}}) \cdots \det(A_{\widehat{I_m}}).
\end{equation*}
For a bipartition $E=I_1 \sqcup I_2$, this is \emph{Fischer's inequality} \cite[Theorem 7.8.5]{HJ}
\[
\det A
\leq
\det (A_{I_1}) \det (A_{I_2}),
\]
where $A_I$ denotes the principal submatrix of $A$ with rows and columns indexed by $I$. Repeated application of Fischer's inequality yields \emph{Hadamard's inequality}, which states  that the determinant of a positive semidefinite matrix is at most the product of its diagonal entries \cite[Theorem 7.8.1]{HJ}.

The correlation  inequality of Theorem~\ref{thm:intro-basis-partition} also has a counterpart for
volumes of coordinate projections. Let $E$ be a nonempty finite set, and let $K$ be a nonempty compact subset of $\RR^E$.
For $S \subseteq E$, let $K_{\widehat{S}}$ be the image of $K$ under the coordinate projection $\mathbb{R}^E \to \mathbb{R}^{E \setminus S}$. 
Then, for any partition $I_1\sqcup\cdots\sqcup I_m$ of $E$, 
\[
 \vol(K)^{m-1}
 \leq
 \vol(K_{\widehat{I_1}})\vol(K_{\widehat{I_2}}) \cdots \vol(K_{\widehat{I_m}}),
\]
where each volume on the right-hand side 
is taken in the corresponding coordinate subspace. 
This inequality is trivial for bipartitions of $E$, but not for more general partitions. For example, when each part is a singleton, this is the \emph{Loomis-Whitney inequality} \cite[Theorem 1]{LoomisWhitney} 
\[
\vol(K)^{|E|-1}
\leq
\prod_{e \in E} \vol(\pi_{\widehat e} K),
\]
where $\pi_{\widehat e}$ is the coordinate projection that forgets the $e$-th coordinate of $\mathbb{R}^E$.

Let $E$ be a nonempty finite set, and let $r$ be a positive integer. A collection of subsets
$I_1,\dots,I_m$ of $E$ is called an \emph{$r$-cover} of $E$ if every element
of $E$ belongs to exactly $r$ of the subsets $I_j$. 
The inequality for volumes of coordinate projections of $K$ is a special case of the \emph{Bollob\'as--Thomason inequality} \cite[Theorem 2]{bollobas1995projections}:
For every $r$-cover $I_1,\ldots,I_m$ of $E$ 
and every compact subset $K\subseteq\RR^E$, we have
\[
\vol(K)^{m-r}
\leq 
\vol(K_{\widehat{I_1}}) \vol(K_{\widehat{I_2}})  \cdots \vol(K_{\widehat{I_m}}).
\]
Alternatively, applying this result to the $(m-r)$-cover of $E$ formed by the complements of $I_1,\ldots,I_m$ in $E$, we obtain the dual formulation
\[
\vol(K)^{r}
\leq 
\vol(K_{I_1}) \vol(K_{I_2})  \cdots \vol(K_{I_m}),
\]
where $\vol(K_{S})$ denotes the volume of the image of $K$ under the coordinate  projection $\mathbb{R}^E \to \mathbb{R}^S$.

Many fundamental inequalities for intersection numbers in algebraic geometry and complex geometry have a similar form.
 Let $X$ be a $d$-dimensional projective variety over an
algebraically closed field. 
For Cartier divisors
$\gamma_1,\dots,\gamma_d$ on $X$, we denote their intersection number by
\[
(\gamma_1 \gamma_2 \cdots \gamma_d)\coloneq \deg_X (\gamma_1 \gamma_2 \cdots \gamma_d). 
\]
Recall that a Cartier divisor is \emph{nef} if its intersection with
every irreducible curve is nonnegative;  see
\cite[Section 1.4]{lazarsfeldPosI}. All our results remain valid for nef $\mathbb{R}$-divisors in the sense of \cite[Section 1.3]{lazarsfeldPosI}.
When $X$ is a surface, the
\emph{Hodge index theorem} \cite[Exercise V.1.9]{Hartshorne} implies that  any nef divisors $\alpha$ and $\beta$ on $X$ satisfy
\begin{equation*}\label{eq:intro-hodge}
(\beta^2)(\alpha^2)
\leq (\beta\alpha)^2.
\end{equation*} 
In arbitrary dimension, the \emph{reverse Khovanskii--Teissier inequality}
of \cite{lehmann2017correspondences}, in the generalized form established in \cite[Theorem 3.5]{jiang2023algebraic}, states that, for any nef divisors
$\alpha,\beta_1,\dots,\beta_d$ on $X$ and any integer $k$ with
$1\leq k<d$, we have
\begin{equation*}\label{eq:intro-rKT}
\Big(\beta_1\beta_2 \cdots\beta_d\Big)\Big(\alpha^{[d]}\Big)
\leq
\Big(\beta_1\cdots\beta_k\, \alpha^{[d-k]}\Big)
\Big(\beta_{k+1}\cdots\beta_d\, \alpha^{[k]}\Big),
\end{equation*}
where $\alpha^{[k]}\coloneq\alpha^k/k!$ denotes the $k$-th divided power of $\alpha$.
These inequalities also hold in the compact K\"ahler setting, with nef divisor classes replaced by nef  $(1,1)$-classes and intersection numbers replaced by integrals of cup products. For real $(1,1)$-classes $\delta_1,\dots,\delta_d$ on a $d$-dimensional compact K\"ahler manifold $Y$, we use the notation
\[
(\delta_1\delta_2\cdots\delta_d)
\coloneq \int_Y \delta_1\wedge\delta_2\wedge\cdots\wedge\delta_d.
\]
A real $(1,1)$-class is \emph{nef} if it belongs to the closure of the K\"ahler cone of $Y$. With these conventions, the Hodge index inequality and the reverse Khovanskii--Teissier inequality for nef $(1,1)$-classes on compact K\"ahler manifolds take the same form as their counterparts for nef divisors on projective varieties stated above \cite[Section 5]{lehmann2017correspondences}.

Our main result, Theorem~\ref{thm introduction} below, is a common generalization of the Hodge index inequality and the reverse Khovanskii--Teissier inequality. Through the theory of volume polynomials, it also specializes to the correlation inequalities for graphs and matroids stated above, the Hadamard--Fischer inequality for positive semidefinite matrices, as well as the Bollob\'as--Thomason inequality and the Alexandrov--Fenchel inequality for convex bodies. To state the result, we first extend the notion of an $r$-cover from sets to multisets.

Let $E$ be a nonempty finite set. 
A \emph{multiset} on $E$ is a nonnegative integer-valued function
$B:E\to\ZZ_{\geq 0}$. 
We call $B(e)$ the \emph{multiplicity} of $e\in E$ in $B$, and define the \emph{cardinality} of $B$ by
\[
|B|\coloneq \sum_{e\in E} B(e).
\]
We say that a multiset is \emph{empty} if its cardinality is zero.
We identify each subset of $E$ with its indicator function, thereby regarding it as a multiset on $E$. For multisets $B$ and $C$ on $E$, we write $C \subseteq B$ if the multiplicity of $e$ in $C$ is at most the multiplicity of $e$ in $B$ for every $e\in E$.

Let $B$ be a multiset on $E$, and let $r$ be a positive integer. 
An \emph{$r$-cover} of $B$ is a finite
collection of multisets $I=(I_1,\dots,I_m)$ with each $I_j \subseteq B$ such that
\[
\sum_{j=1}^{m} I_j=rB.
\]
Empty members and repetitions are allowed in $I$. When $B$ is a subset of $E$, each $I_j$ is also a subset of $E$, and 
the $r$-cover condition says that every element of $B$ belongs to exactly $r$
members of $I$. 
 If $B$ is nonempty, then  $r$ is at most $m$, with equality precisely when $I_j=B$ for every $j$. 
For an $r$-cover $I=(I_1,\ldots,I_m)$ of $B$, we introduce the constants
\[
\binom{B}{I} \coloneq \binom{B}{I_1} \binom{B}{I_2}\cdots \binom{B}{I_m}, \ \ \text{where} \ \ \binom{B}{I_j}\coloneq \prod_{e\in E}\binom{B(e)}{I_j(e)}, \ \ \text{and} \ \ B! \coloneq \prod_{e \in E} B(e)!.
\]
When $B$ is a subset of $E$, these constants are  $1$.

Let $d$ be a positive integer, and let $X$ be a $d$-dimensional projective variety over an algebraically closed field. 
As before, for a nef divisor $\alpha$ on $X$ and a nonnegative integer $k$, we write $\alpha^{[k]}\coloneq\alpha^k/k!$ for the $k$-th divided power of $\alpha$. For a collection of nef divisors $\beta=(\beta_e)_{e\in E}$ on $X$ and a multiset $B$ on $E$, we set 
\[
\beta^{[B]}\coloneq\prod_{e\in E}\beta_e^{[B(e)]}.
\]
When $B$ is a subset of $E$, this reduces to  the product of $\beta_e$ for $e \in B$.

\begin{theorem}
\label{thm introduction}
Let $I=(I_1,\ldots,I_m)$ be an $r$-cover of a multiset $B$ of cardinality $d$ on $E$. 
For any nef divisor $\alpha$ and any collection of nef divisors $\beta=(\beta_e)_{e \in E}$ on a $d$-dimensional projective variety, we have 
\[
\binom{B}{I}
\left(\beta^{[B]}\right)^r
\left(\alpha^{[d]}\right)^{m-r}
\leq
\left(\beta^{[I_1]}\alpha^{[d-|I_1|]}\right)
\left(\beta^{[I_2]}\alpha^{[d-|I_2|]}\right)
\cdots
\left(\beta^{[I_m]}\alpha^{[d-|I_m|]}\right).
\]
The same inequality holds for any nef $(1,1)$-class $\alpha$ and any collection of nef $(1,1)$-classes $\beta=(\beta_e)_{e \in E}$ on a $d$-dimensional compact K\"ahler manifold.
\end{theorem}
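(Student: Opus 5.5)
We reduce the statement to a property of the volume polynomial and then prove that property by induction. Set
\[
f(x_0,x)\coloneq \frac{1}{d!}\left((x_0\alpha+\textstyle\sum_e x_e\beta_e)^d\right).
\]
This is the volume polynomial. Its coefficients are exactly the numbers $\beta^{[C]}\alpha^{[d-|C|]}$, with the divided powers absorbing the multinomial factors. By a standard perturbation argument (replace $\alpha,\beta_e$ by $\alpha+\epsilon h$, $\beta_e+\epsilon h$ with $h$ ample or K\"ahler, then let $\epsilon\to 0$), it suffices to treat ample or K\"ahler classes. In that case $f$ is a Lorentzian polynomial, and more strongly it has the Hodge--Riemann property that every iterated directional derivative $\partial_{\gamma_1}\cdots\partial_{\gamma_{d-2}}f$ along positive classes is a quadratic form of Lorentzian signature. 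I would isolate the following claim, which uses only this structure of $f$: for every $r$-cover $I$ of $B$, the inequality holds with the coefficients of $f$ in place of intersection numbers. Since the coefficients are continuous in the classes, the claim then gives the theorem for nef classes.

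The core step is the Hodge index or reverse Khovanskii--Teissier type inequality for a bipartition $B=C\sqcup D$:
\[
\binom{B}{C}\,\beta^{[B]}\,\alpha^{[d]}\leq \left(\beta^{[C]}\alpha^{[d-|C|]}\right)\left(\beta^{[D]}\alpha^{[d-|D|]}\right).
\]
I would prove this by induction on $d$, cutting by a general member of $|\beta_e|$ for some $e$ in $C$ (or working in cohomology in the K\"ahler case). The case $d=2$ is the Hodge index theorem. The binomial factor $\binom{B}{C}$ appears because converting ordinary powers to divided powers gives $\beta_e^{[B(e)]}=\binom{B(e)}{C(e)}\beta_e^{[C(e)]}\beta_e^{[D(e)]}$. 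The generalized form in \cite[Theorem 3.5]{jiang2023algebraic}, applied after repeating the $\beta_e$, then gives the bipartition case.

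Next comes the passage from bipartitions to general $r$-covers. I would induct on $m$ and on $r$. Normalize each coefficient by setting $F(C)\coloneq \beta^{[C]}\alpha^{[d-|C|]}/\alpha^{[d]}$ and $G(C)\coloneq \binom{B}{C}^{-1}F(C)$. The goal becomes a Bollob\'as--Thomason type statement, $G(B)^r\leq \prod_j G(I_j)$ after adjusting binomials. For this it suffices to show that $\log G$ is supermodular-like on the sublattice $\{C\subseteq B\}$, namely
\[
G(C\cup D)\,G(C\cap D)\leq G(C)\,G(D).
\]
Here union and intersection mean pointwise max and min, and the binomials should be handled through the identity $\binom{B}{C}=\binom{B}{D}$ for complementary pieces. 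Given this submodularity, the standard uncrossing argument that proves Bollob\'as--Thomason from Shearer-type submodularity finishes the job. That argument repeatedly replaces $I_j,I_k$ by $I_j\cup I_k,\,I_j\cap I_k$ until the cover is a chain, and a chain $r$-cover is $r$ copies of $B$ plus empty sets. Each replacement keeps $\sum I_j=rB$ and does not increase the right-hand side.

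The main obstacle is establishing the submodularity inequality for general multisets $C,D$, not just complementary ones. To get it, I would apply the bipartition inequality to the Lorentzian polynomial obtained by differentiating $f$ along $\beta^{C\cap D}$. Concretely, intersect with $\beta^{[C\cap D]}$ to get a lower-dimensional cycle on which the classes remain nef, so that cycle still has the Hodge--Riemann structure. Then apply the bipartition case to $B'=(C\cup D)-(C\cap D)$, split as $(C-C\cap D)\sqcup(D-C\cap D)$, with $\alpha$ filling the remaining dimensions. Checking that the divided-power and binomial bookkeeping matches exactly, so that the resulting constants are precisely $\binom{B}{I}$, is where I expect the delicate work to be.
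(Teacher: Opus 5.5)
\textbf{There is a genuine gap: the key lemma is false.} Your submodularity lemma $G(C\cup D)\,G(C\cap D)\le G(C)\,G(D)$ fails whenever $|C\cup D|<d$, already when $B$ is a set, so the uncrossing argument cannot be carried out. Take $d=3$, $B=\{1,2,3\}$, $C=\{1\}$, $D=\{2\}$. The lemma then reads $(\beta_1\beta_2\alpha)(\alpha^{[3]})\le(\beta_1\alpha^{[2]})(\beta_2\alpha^{[2]})$, that is, $\frac23(\beta_1\beta_2\alpha)(\alpha^3)\le(\beta_1\alpha^2)(\beta_2\alpha^2)$. This is the inequality of Proposition~\ref{prop:disc-exponents-1} for a two-element set inside dimension $3$. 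It holds for mixed discriminants but fails for intersection numbers. Example~\ref{ex:octahedron} gives $\frac23\MV(\mathscr{E}_1,\mathscr{E}_2,K)\MV(K,K,K)=\frac{16}{27}>\frac{12}{27}=\MV(\mathscr{E}_1,K,K)\MV(\mathscr{E}_2,K,K)$, and by Proposition~\ref{prop:convex bodies are projective} this is realized by nef divisors on a toric threefold. Your padding step does not rescue it. Padding $C\sqcup D$ with a copy of $\alpha$ and applying reverse Khovanskii--Teissier gives the lemma only with an extra factor $2$, and the example shows that some such loss is unavoidable. The very first uncrossing move for the partition $(\{1\},\{2\},\{3\})$ replaces two singletons $\{i\},\{j\}$ by $\{i,j\},\emptyset$, so it needs an inequality of exactly this false type. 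This is not a bookkeeping issue. More broadly, Example~\ref{ex:new inequality} shows that the Lorentzian structure of $f$ together with all reverse Khovanskii--Teissier inequalities does not imply the tripartition inequality. So the valid two-part inequalities cannot formally produce the multipart ones.

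\textbf{The multiset case also loses the sharp constant.} For a bipartition $C+D=B$, the theorem requires the constant $\binom{B}{C}\binom{B}{D}=\binom{B}{C}^2$, and so does your own normalization $G(B)G(\emptyset)\le G(C)G(D)$. Reverse Khovanskii--Teissier applied to repeated classes gives only $\binom{B}{C}$. For $B=2\{e\}$ this yields $(\beta^2)(\alpha^2)\le 2(\alpha\beta)^2$ rather than the Hodge index inequality. Moreover, for multisets a chain cover need not consist of copies of $B$ and empty sets: $(\{e\},\{e\})$ is a chain $1$-cover of $2\{e\}$. So uncrossing never produces the factor $\binom{B}{I}$.

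\textbf{How the paper avoids both problems.} It never uses inequalities with $|B|<d$. For sets it proves all $r$-covers at once by induction on dimension (Proposition~\ref{thrm cover of set}). It moves $\alpha$ to $\alpha-t\beta_1$ up to the boundary of the pseudoeffective cone and integrates restricted volumes on $H\in|\beta_1|$. There the cut-down cover is again a cover of a set of size $\dim H$. The sharp multiset constant comes from upper truncation and polarization (Theorem~\ref{thm:truncation}, Corollary~\ref{cor:truncation}) followed by AM--GM. The K\"ahler case, where one cannot cut by divisors, is obtained by transferring the mixed discriminant inequality (Corollary~\ref{cor:discriminant main}) via Yau's theorem (Proposition~\ref{thm:disc=>Kahler}).
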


When $E$ has cardinality $1$, we recover the Hodge index inequality from the $1$-cover $(E,E)$ of $2E$. When $E$ has cardinality $d$, we recover the reverse Khovanskii--Teissier inequalities from the bipartitions of $E$.
The constant  $\binom{B}{I}$ is optimal for every
choice of $B$ and $I$; see Example~\ref{ex:optimal}.

We say that an $r$-cover $I$ of $B$ is \emph{indecomposable} if no nonempty subcollection of $I$ is a $q$-cover of $B$ for nonnegative $q<r$. To prove Theorem~\ref{thm introduction}, we may and will consider only the indecomposable covers of $B$, of which there are only finitely many for each fixed $B$. 
For example, suppose that $B$ is a set of cardinality $4$. 
In this case, Shapley enumerated all the indecomposable covers of $B$ in  \cite[Section 7]{Shapley}. 
Up to relabeling, the inequalities associated with nontrivial indecomposable $1$-covers of $B$ are
\begin{align*}
&\Big(\beta_1\beta_2\beta_3\beta_4\Big)\Big(\alpha^{[4]}\Big)^{1}
\le \Big(\beta_1\alpha^{[3]}\Big)\Big(\beta_2\beta_3\beta_4\alpha^{[1]}\Big),\\
&\Big(\beta_1\beta_2\beta_3\beta_4\Big)\Big(\alpha^{[4]}\Big)^1
\le \Big(\beta_1\beta_2\alpha^{[2]}\Big)\Big(\beta_3\beta_4\alpha^{[2]}\Big),\\
&\Big(\beta_1\beta_2\beta_3\beta_4\Big)\Big(\alpha^{[4]}\Big)^2
\le \Big(\beta_1\alpha^{[3]}\Big)\Big(\beta_2\alpha^{[3]}\Big)
\Big(\beta_3\beta_4\alpha^{[2]}\Big),\\
&\Big(\beta_1\beta_2\beta_3\beta_4\Big)\Big(\alpha^{[4]}\Big)^3
\le \Big(\beta_1\alpha^{[3]}\Big)\Big(\beta_2\alpha^{[3]}\Big)
\Big(\beta_3\alpha^{[3]}\Big)\Big(\beta_4\alpha^{[3]}\Big). 
\intertext{The inequalities associated with indecomposable
$2$-covers of $B$ are }
&\Big(\beta_1\beta_2\beta_3\beta_4\Big)^2\Big(\alpha^{[4]}\Big)^1
\le \Big(\beta_1\beta_2\alpha^{[2]}\Big)
\Big(\beta_1\beta_3\beta_4\alpha^{[1]}\Big)\Big(\beta_2\beta_3\beta_4\alpha^{[1]}\Big),\\
&\Big(\beta_1\beta_2\beta_3\beta_4\Big)^2\Big(\alpha^{[4]}\Big)^2
\le \Big(\beta_1\alpha^{[3]}\Big)\Big(\beta_2\beta_3\alpha^{[2]}\Big)
\Big(\beta_2\beta_4\alpha^{[2]}\Big)\Big(\beta_1\beta_3\beta_4\alpha^{[1]}\Big),\\
&\Big(\beta_1\beta_2\beta_3\beta_4\Big)^2\Big(\alpha^{[4]}\Big)^3
\le \Big(\beta_1\alpha^{[3]}\Big)\Big(\beta_1\alpha^{[3]}\Big)
\Big(\beta_2\beta_3\alpha^{[2]}\Big)
\Big(\beta_2\beta_4\alpha^{[2]}\Big)\Big(\beta_3\beta_4\alpha^{[2]}\Big),
\intertext{and the inequalities associated with indecomposable
$3$-covers of $B$ are}
&\Big(\beta_1\beta_2\beta_3\beta_4\Big)^3\Big(\alpha^{[4]}\Big)^1 
\le \Big(\beta_1\beta_2\beta_3\alpha^{[1]}\Big)\Big(\beta_1\beta_2\beta_4\alpha^{[1]}\Big)
\Big(\beta_1\beta_3\beta_4\alpha^{[1]}\Big)\Big(\beta_2\beta_3\beta_4\alpha^{[1]}\Big),\\
&\Big(\beta_1\beta_2\beta_3\beta_4\Big)^3\Big(\alpha^{[4]}\Big)^2
\le \Big(\beta_1\beta_2\alpha^{[2]}\Big)\Big(\beta_1\beta_3\alpha^{[2]}\Big)
\Big(\beta_1\beta_4\alpha^{[2]}\Big)
\Big(\beta_2\beta_3\beta_4\alpha^{[1]}\Big)\Big(\beta_2\beta_3\beta_4\alpha^{[1]}\Big).
\end{align*}
There are no indecomposable $r$-covers of $B$ with $r \ge 4$ in this case. 
When $B$ is a set, Alon and Berman \cite{AlonBerman} gave an upper bound on $r$ for indecomposable $r$-covers of $B$ in terms of the cardinality of $B$, simplifying an earlier argument of Huckemann, Jurkat, and Shapley from game theory \cite{Graver}.

We study these inequalities through four classes of \emph{volume polynomials} reviewed in Section~\ref{sec:volume}:
projective volume polynomials, K\"ahler volume polynomials,  Hermitian volume polynomials, and convex body
volume polynomials. Their normalized coefficients are, respectively, intersection numbers of nef divisors, integrals of cup products of nef $(1,1)$-classes, mixed
discriminants of positive semidefinite matrices, and mixed volumes of convex bodies, up to taking limits.  
We compare the four classes of volume polynomials in
 Section~\ref{sec:relations}. In particular, we show  that  every Hermitian volume polynomial  is a projective volume polynomial and every convex body
volume polynomial is a
projective volume polynomial. These inclusions transfer inequalities for intersection numbers to those for mixed volumes and mixed discriminants.

We now express our main inequality in terms of projective volume polynomials. Hereafter, for a positive integer $n$, we write $\llbracket n \rrbracket$ for the set of indices $\{1,2,\ldots,n\}$. 
Let $X$ be a $d$-dimensional projective variety over an
algebraically closed field, and let $\beta=(\beta_1,\ldots,\beta_n)$
be a tuple of nef divisors on $X$. The \emph{volume polynomial} associated to $\beta$ is the homogeneous polynomial
\[
f(x)=f_\beta(x_1,\dots,x_n)\coloneq \frac{1}{d!}\deg_X( (x_1\beta_1+\cdots+x_n\beta_n)^d).
\]
For any multiset $B$ of cardinality $d$ on $\llbracket n \rrbracket$,  the coefficient of the monomial $x^B=x_1^{B(1)}\cdots x_n^{B(n)}$ in the projective volume polynomial associated to $\beta$ is the degree of the class $\beta^{[B]}$ on $X$.
Similarly, if $\delta=(\delta_1,\ldots,\delta_n)$ is a collection of nef $(1,1)$-classes on a $d$-dimensional compact K\"ahler manifold $Y$, the \emph{volume polynomial} associated to $\delta$ is the homogeneous polynomial
\[
f(x)=f_\delta(x_1,\dots,x_n)\coloneq \frac{1}{d!}\int_Y (x_1\delta_1+\cdots+x_n\delta_n)^d.
\]
For a multiset $S$ on $\llbracket n \rrbracket$, we consider the divided power differential operator
\[
\partial^{[S]} \coloneq \partial_1^{[S(1)]} \partial_2^{[S(2)]} \cdots \partial_n^{[S(n)]}, \ \ \text{where} \ \ \partial_i^{[k]}\coloneq \frac{1}{k!}\left(\frac{\partial}{\partial x_i}\right)^k.
\]
Taking $\alpha=\sum_{i=1}^n x_i\beta_i$ in
Theorem~\ref{thm introduction} for $x \in \mathbb{R}^n_{\ge 0}$, we obtain the following condition for $f(x)$.

\begin{corollary}\label{thm:intro-volume-polynomials}
Let $I=(I_1,\ldots,I_m)$ be an $r$-cover of a multiset $B$ of cardinality $d$ on $\llbracket n \rrbracket$.  For any volume polynomial $f(x)$ associated to an $n$-tuple of nef divisors on a $d$-dimensional projective variety, we have
\begin{equation*}\label{eq:intro-volume-polynomial-inequality}
\binom{B}{I}
(\partial^{[B]}f)^r f(x)^{m-r}
\leq 
\partial^{[I_1]}f(x) \
\partial^{[I_2]}f(x) \
 \cdots \
\partial^{[I_m]}f(x) \ \ \text{for all $x \in \mathbb{R}^n_{\ge 0}$.}
\end{equation*}
The same inequality holds for any volume polynomial $f(x)$ associated to an $n$-tuple of nef $(1,1)$-classes on a $d$-dimensional compact K\"ahler manifold.
\end{corollary}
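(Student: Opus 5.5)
The plan is to deduce the corollary directly from Theorem~\ref{thm introduction}, specialized to $\alpha=x_1\beta_1+\cdots+x_n\beta_n$, using one derivative identity for volume polynomials. Fix $x\in\mathbb{R}^n_{\ge 0}$ and put $\alpha_x\coloneq\sum_{i=1}^n x_i\beta_i$. This is a nonnegative real combination of nef divisors, so it is a nef $\mathbb{R}$-divisor; the paper notes that all results remain valid for nef $\mathbb{R}$-divisors. In the K\"ahler case, $\alpha_x$ is a nef $(1,1)$-class because the nef cone is a closed convex cone. Theorem~\ref{thm introduction} therefore applies to $\alpha=\alpha_x$ and to the collection $\beta=(\beta_1,\dots,\beta_n)$ indexed by $E=\llbracket n\rrbracket$.

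The key step is the identity
\[
\partial^{[S]}f(x)=\deg_X\Big(\beta^{[S]}\alpha_x^{[d-|S|]}\Big)
\]
for every multiset $S$ on $\llbracket n\rrbracket$ with $|S|\le d$. I would prove it by expanding with the multinomial theorem:
\[
f(x)=\sum_{|C|=d}\deg_X\big(\beta^{[C]}\big)\,x^C,
\]
since $\frac{1}{d!}\alpha_x^d=\alpha_x^{[d]}=\sum_{|C|=d}\beta^{[C]}x^C$, where $x^C$ absorbs the multinomial coefficient through divided powers. Next use $\partial_i^{[k]}x_i^{c}=\binom{c}{k}x_i^{c-k}$ together with $\binom{C}{S}\beta^{[C]}$ and the identity
\[
\binom{C}{S}\frac{1}{C!}=\frac{1}{S!}\cdot\frac{1}{(C-S)!}.
\]
This gives
\[
\partial^{[S]}f=\sum_{C\supseteq S}\deg\big(\beta^{[S]}\beta^{[C-S]}\big)x^{C-S},
\]
and re-summing over $D=C-S$ with $|D|=d-|S|$ yields $\beta^{[S]}\alpha_x^{[d-|S|]}$.

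With the identity in hand, the corollary is just a translation. For $S=B$ with $|B|=d$, we get $\partial^{[B]}f=\deg(\beta^{[B]})$, which is a constant. For $S=\emptyset$, we get $f(x)=\deg(\alpha_x^{[d]})$. For each $S=I_j$, the corresponding factor on the right side of Theorem~\ref{thm introduction} matches $\partial^{[I_j]}f(x)$. The K\"ahler case is verbatim the same, with $\int_Y$ in place of $\deg_X$.

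The only point needing care is the bookkeeping of the divided-power coefficients in the derivative identity; everything else is routine.
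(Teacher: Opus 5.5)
Your proposal is correct and follows the paper's route exactly: the paper obtains this corollary by substituting $\alpha=\sum_{i=1}^n x_i\beta_i$ for $x\in\mathbb{R}^n_{\ge 0}$ into Theorem~\ref{thm introduction}. Your identity $\partial^{[S]}f(x)=\deg_X\big(\beta^{[S]}\alpha_x^{[d-|S|]}\big)$ is the divided-power bookkeeping the paper leaves implicit, and you also correctly check that $\alpha_x$ is nef in both the projective and the K\"ahler settings.
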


For a nonnegative vector $u=(u_1,\ldots,u_n)$, we write $\partial_u$ for the corresponding directional derivative $\sum_{i=1}^n u_i  \partial_i$. A homogeneous polynomial $f$ of degree $d \ge 2$ in $n$ variables with nonnegative coefficients is \emph{Lorentzian} if, for all  $v_1,\ldots,v_d \in \mathbb{R}^n_{\ge 0}$, we have
\[
 \big( \partial_{v_1}  \partial_{v_1} \partial_{v_3}  \cdots \partial_{v_d} f \big)  \big( \partial_{v_2}  \partial_{v_2} \partial_{v_3}  \cdots \partial_{v_d} f\big)   \le \big( \partial_{v_1} \partial_{v_2} \partial_{v_3}\cdots \partial_{v_d} f \big)^2.
\]
According to \cite[Theorem 4.6]{branden2020lorentzian}, every projective volume polynomial is a Lorentzian polynomial. 
Corollary~\ref{thm:intro-volume-polynomials} provides a way to show that a given Lorentzian polynomial is not a projective volume polynomial, and hence not a convex body volume polynomial; see Examples~\ref{ex:rKT violation} and~\ref{ex:new inequality}. Such examples show that Theorem~\ref{thm introduction} cannot be formally deduced from the Lorentzian property of volume polynomials.

The theory of volume and covolume polynomials developed in
\cite{AluffiCovolume,LinearOperator} provides further extensions of
Theorem~\ref{thm introduction}, as differential operators associated
with projective covolume polynomials preserve projective volume
polynomials; see \cite[Section 4]{HuhVolume} for a survey.
For example, we may insert a common product of auxiliary nef divisor
classes into each intersection product in Theorem~\ref{thm introduction},
as follows.

\begin{corollary}\label{cor:monomial}
Let $B$ be a multiset of cardinality $d$ on $\llbracket n \rrbracket$, and let $I=(I_1,\ldots,I_m)$ be an $r$-cover of $B$. 
For any nef divisors $\alpha,\beta_1,\ldots,\beta_n,\gamma_1,\ldots,\gamma_k$ on a projective variety of dimension $d+k$,  
\[
\binom{B}{I}
\left(\beta^{[B]} \ \gamma \right)^r
\left(\alpha^{[d]} \ \gamma  \right)^{m-r}
\leq 
\left(\beta^{[I_1]} \ \alpha^{[d-|I_1|]} \ \gamma  \right)
\cdots
\left(\beta^{[I_m]} \ \alpha^{[d-|I_m|]} \ \gamma  \right),
\]
where $\beta=(\beta_1,\ldots,\beta_n)$ and $\gamma=\gamma_1 \cdots \gamma_k$.
\end{corollary}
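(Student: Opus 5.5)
The plan is to reduce Corollary~\ref{cor:monomial} to Theorem~\ref{thm introduction} by cutting the $(d+k)$-dimensional variety $X$ down to a $d$-dimensional subvariety that represents the class $\gamma=\gamma_1\cdots\gamma_k$. Two cases arise. If each $\gamma_i$ is very ample, or more generally ample, we may pass to multiples and work with general members. For general members $H_i\in|N\gamma_i|$, Bertini gives a $d$-dimensional subscheme $Y=H_1\cap\cdots\cap H_k$. Its fundamental cycle $[Y]=\sum_j a_j[Y_j]$ is a nonnegative combination of $d$-dimensional integral subvarieties $Y_j$, and it represents $N^k\gamma$. Then every intersection number $(\delta_1\cdots\delta_d\,\gamma)$ equals $N^{-k}\sum_j a_j\,(\delta_1|_{Y_j}\cdots\delta_d|_{Y_j})$, and restrictions of nef divisors remain nef.

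The difficulty is that the inequality of Theorem~\ref{thm introduction} is multiplicative and not linear, so it does not directly add over the components $Y_j$. I would handle this through the volume-polynomial formulation of Corollary~\ref{thm:intro-volume-polynomials}. Consider the polynomial
\[
g(x)=\tfrac{1}{d!}\big((x_1\beta_1+\cdots+x_n\beta_n)^d\,\gamma\big),
\]
the result of applying the differential operator $\partial_{\gamma_1}\cdots\partial_{\gamma_k}$ to the volume polynomial of $(\beta,\gamma_1,\ldots,\gamma_k)$ on $X$. By the covolume theory cited above \cite{AluffiCovolume,LinearOperator}, this operator preserves projective volume polynomials. So $g$ is itself a projective volume polynomial, associated to some tuple of nef divisors $\beta'$ on some $d$-dimensional projective variety $X'$. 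I would also include $\alpha$ as an extra variable $x_0$, so that $g$ is the volume polynomial of $(\alpha',\beta'_1,\ldots,\beta'_n)$ on $X'$ with the property
\[
\big(\beta'^{[S]}\alpha'^{[d-|S|]}\big)_{X'}=\big(\beta^{[S]}\alpha^{[d-|S|]}\gamma\big)_X
\]
for every multiset $S$ of cardinality at most $d$. Applying Theorem~\ref{thm introduction} to $\alpha',\beta'$ on $X'$ and translating back coefficient by coefficient then gives exactly the claimed inequality.

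The main obstacle is justifying the step that $g$ is a projective volume polynomial on a single irreducible $d$-dimensional variety, since the naive intersection $Y$ is reducible. The cleanest route I would try is the following. First reduce by continuity to the case where $\alpha,\beta_e,\gamma_i$ are ample; nef $\mathbb{R}$-divisors are limits of ample $\mathbb{Q}$-divisors, and both sides are continuous. Next scale so that the $\gamma_i$ are very ample. Bertini's theorem (irreducibility of general linear sections of an irreducible variety of dimension at least $2$ under a very ample system) then shows that $Y$ is an integral variety of dimension $d$ when $d\ge 1$, possibly after passing to the normalization or reduced structure in positive characteristic. In that situation no sum over components is needed. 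Multiplicities are not an issue, since a general section is generically reduced by the separability version of Bertini for very ample systems, and the restrictions of the ample $\alpha,\beta_e$ to $Y$ are nef. So the corollary follows directly from Theorem~\ref{thm introduction} applied on $Y$, with the scaling factors cancelling because both sides have the same total homogeneity in $\gamma$ (degree $m$ on each side).
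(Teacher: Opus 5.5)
Your final argument is correct, but it is not the paper's route. It is the alternative the paper mentions in one sentence (``the result follows by taking complete intersections and a limit of ample classes'') without carrying it out.

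The paper stays inside the volume-polynomial formalism:
\begin{itemize}
\item it takes the volume polynomial $f(x_0,x_1,\dots,x_n,y_1,\dots,y_k)$ of $(\alpha,\beta_1,\dots,\beta_n,\gamma_1,\dots,\gamma_k)$;
\item it invokes \cite[Corollary~1.4]{LinearOperator} to know that $\partial_{y_1}\cdots\partial_{y_k}f$ is again a projective volume polynomial;
\item it removes the $y$-variables by an upper truncation (Theorem~\ref{thm:truncation});
\item it applies Corollary~\ref{thm:intro-volume-polynomials} at $(1,0,\dots,0)$.
\end{itemize}

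Your route is different. You reduce to ample $\mathbb{Q}$-classes, make the $\gamma_i$ very ample, and cut down $k$ times using Bertini irreducibility for very ample systems on varieties of dimension at least $2$. This holds in every characteristic; the last cut needs $d\ge 1$, and $d=0$ is trivial. You get an integral $d$-dimensional $Y$ and a constant $c>0$ with $(\delta_1\cdots\delta_d\,\gamma)_X=c\,(\delta_1|_Y\cdots\delta_d|_Y)_Y$. Since each side of the inequality has exactly $m$ factors, $c^m$ cancels, and Theorem~\ref{thm introduction} applied on $Y$ finishes.

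The same homogeneity shows you do not need generic reducedness either: a multiplicity $a$ in $[H\cap X]=a[Y]$ cancels as $a^m$.

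What your approach buys is elementarity: it uses only Theorem~\ref{thm introduction} and Bertini. What the paper's approach buys is generality. The identical argument works when $\gamma$ is replaced by the evaluation of any projective covolume polynomial at nef classes (see Remark~\ref{rmk:covolume}), where there is no complete intersection to cut along.

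One caution about your second paragraph, which is essentially the paper's route. A projective volume polynomial is by definition only a limit of realizable ones. Whether it always comes from nef classes on a single variety is listed as an open question in Section~\ref{sec:definitions}. So you cannot assert that $g$ is the volume polynomial of some $(\alpha',\beta')$ on some $X'$.

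That route does not need this claim. The inequality of Corollary~\ref{thm:intro-volume-polynomials} is closed under coefficientwise limits, so it can be applied to $g$ directly. In particular, the irreducibility ``obstacle'' you raise belongs only to the complete-intersection route, where your Bertini argument resolves it.
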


The same inequality holds more generally when $\gamma$ is the evaluation of any degree $k$ projective covolume polynomial at nef divisor classes on the projective variety. We do not know if the analogue of Corollary~\ref{cor:monomial} holds for nef $(1,1)$-classes on compact K\"ahler manifolds.

Since  every Hermitian volume polynomial is a
projective volume polynomial and every convex body
volume polynomial is a
projective volume polynomial,
the inequality of Corollary~\ref{cor:monomial} also holds when $\alpha,\beta_i,\gamma_j$ are positive semidefinite Hermitian matrices of size $d+k$, with mixed discriminants replacing intersection numbers, and when $\alpha,\beta_i,\gamma_j$ are  convex bodies in $\RR^{d+k}$, with mixed volumes replacing intersection numbers. 
In both settings, products specify lists of arguments, and divided powers indicate repeated arguments with the  factorial normalization; see Section~\ref{sec:definitions} for mixed discriminants and mixed volumes, and Remark~\ref{sec_divided power} for our notational convention.

\begin{corollary}\label{cor:discriminant main}
Let $B$ be a multiset of cardinality $d$ on $\llbracket n\rrbracket$, and let $I=(I_1,\ldots,I_m)$ be an $r$-cover of $B$. 
For any positive semidefinite matrices $A,M_1,\ldots,M_n,N_1,\ldots,N_k$ of size $d+k$, we have 
\[
\binom{B}{I}
\D\left[M^{[B]}, N \right]^r
\D\left[A^{[d]}, N  \right]^{m-r}
\leq 
\D\left[M^{[I_1]}, A^{[d-|I_1|]}, N \right]
\cdots
\D\left[M^{[I_m]}, A^{[d-|I_m|]}, N \right],
\]
where $\D$ is the mixed discriminant, $M=(M_1,\ldots,M_n)$, and $N=(N_1,\ldots,N_k)$.
\end{corollary}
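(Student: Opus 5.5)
The plan is to deduce Corollary~\ref{cor:discriminant main} from Corollary~\ref{cor:monomial}, using the fact announced above (and proved in Section~\ref{sec:relations}) that every Hermitian volume polynomial is a projective volume polynomial. Consider the tuple of positive semidefinite matrices $(A, M_1,\ldots,M_n, N_1,\ldots,N_k)$ of size $d+k$ and its Hermitian volume polynomial
\[
g(y,x,z)=\frac{1}{(d+k)!}\det\Big(yA+\sum_{i=1}^n x_iM_i+\sum_{j=1}^k z_jN_j\Big).
\]
Its normalized coefficients are mixed discriminants: the coefficient of $y^{[a]}x^{[S]}z^{[T]}$ is $\D[A^{[a]},M^{[S]},N^{[T]}]$, with the divided-power convention of Remark~\ref{sec_divided power}. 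Granting that, I will argue in two steps.

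\emph{Step 1 (exact realization).} Suppose the matrices are positive definite with rational (or algebraic) entries, or that the inclusion of Hermitian into projective volume polynomials holds exactly. Then there are a projective variety $X$ of dimension $d+k$ and nef divisors $\alpha,\beta_1,\ldots,\beta_n,\gamma_1,\ldots,\gamma_k$ on $X$ with $f_{(\alpha,\beta,\gamma)}=g$. Comparing coefficients of the monomials $y^{[d-|I_j|]}x^{[I_j]}z_1\cdots z_k$, $x^{[B]}z_1\cdots z_k$ and $y^{[d]}z_1\cdots z_k$ gives
\[
\big(\beta^{[I_j]}\alpha^{[d-|I_j|]}\gamma\big)=\D\big[M^{[I_j]},A^{[d-|I_j|]},N\big],
\]
and the analogous identities for the two factors on the left-hand side. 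Substituting these into Corollary~\ref{cor:monomial} yields the claimed inequality verbatim.

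\emph{Step 2 (limits).} By the paper's definition, the class of projective volume polynomials is probably closed only up to limits ("up to taking limits"). In that case I will first prove the inequality for a dense set of tuples, for instance positive definite matrices with rational entries, where Step~1 applies directly. Both sides of the inequality are polynomial, hence continuous, functions of the matrix entries. Since positive semidefinite matrices are limits of such tuples, the inequality passes to the limit and holds in general.

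I expect the main obstacle to be the realization statement from Section~\ref{sec:relations}, namely that $g$ is a projective volume polynomial. This is the substantial geometric input, for example via a toric or flag-type construction realizing $\det(\sum x_iM_i)$. Here it is simply assumed. The only points needing care are matching the normalizations, $1/(d+k)!$ and divided powers, between mixed discriminants and intersection numbers, and making sure the auxiliary matrices $N_j$ appear with exponent one, so that they correspond to $\gamma=\gamma_1\cdots\gamma_k$.
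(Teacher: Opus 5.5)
Your proposal is correct and follows the paper's proof: the paper deduces this corollary from Corollary~\ref{cor:monomial} via Corollary~\ref{cor:projective strong}, that is, via Proposition~\ref{hermitian vol}. That proposition realizes any tuple of positive semidefinite matrices of size $d+k$ by nef real classes on a self-product $E^{d+k}$ of an elliptic curve with nontrivial endomorphisms (rather than by a toric or flag construction), with $f_{(\alpha,\beta,\gamma)}$ equal to your $g$ only up to a positive constant $c$; this constant is harmless because each side of the inequality is a product of exactly $m$ coefficients, and with real nef classes allowed your limiting Step~2 is essentially automatic.
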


For example, consider the case $k=0$. When $I$ is the partition of $\llbracket n \rrbracket$ into singletons and $A$ is the identity matrix, Corollary~\ref{cor:discriminant main} states that, for any  positive semidefinite matrices $M_1,\ldots,M_d$ of size $d$, we have
\[
\D[M_1,\ldots,M_d] \le \frac{1}{d!}\tr(M_1)\cdots\tr(M_d).
\]
 Equality holds when each $M_i$ is a rank-one diagonal matrix whose $i$-th diagonal entry is $1$. As another example, fix these matrices $M_i$ and allow $A$ to vary. When $I$ is a bipartition of $\llbracket n \rrbracket$, Corollary~\ref{cor:discriminant main} for $k=0$ becomes Fischer's inequality for $A$.

\begin{corollary}\label{cor: mixed volumes}
Let $B$ be a multiset of cardinality $d$ on $\llbracket n \rrbracket$, and let $I=(I_1,\ldots,I_m)$ be an $r$-cover of $B$. 
For any convex bodies $K,P_1,\ldots,P_n,Q_1,\ldots,Q_k$   in $\mathbb{R}^{d+k}$, we have
\[
\binom{B}{I}
\MV\left(P^{[B]}, Q \right)^r
\MV\left(K^{[d]}, Q \right)^{m-r}
\leq 
\MV\left(P^{[I_1]}, K^{[d-|I_1|]}, Q \right)
\cdots
\MV\left(P^{[I_m]}, K^{[d-|I_m|]}, Q \right),
\]
where $\MV$ is the mixed volume, $P=(P_1,\ldots,P_n)$, and $Q=(Q_1,\ldots,Q_k)$. 
\end{corollary}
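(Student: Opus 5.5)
We derive Corollary~\ref{cor: mixed volumes} from Corollary~\ref{cor:monomial} via the comparison of volume polynomials announced for Section~\ref{sec:relations}: every convex body volume polynomial is a projective volume polynomial, possibly after a limit.

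\textbf{Reduction to polytopes.} First reduce to rational polytopes. Mixed volumes are continuous in the Hausdorff metric and multilinear under Minkowski sums. Both sides of the inequality are polynomial expressions in finitely many mixed volumes. Rational polytopes are dense among convex bodies. So it suffices to treat the case where $K,P_1,\ldots,P_n,Q_1,\ldots,Q_k$ are rational polytopes, and since the inequality is homogeneous we may scale them to be lattice polytopes. For nonnegative (nonzero-volume or degenerate) bodies, continuity also handles lower-dimensional cases.

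\textbf{Passage to a toric variety.} Choose a smooth projective toric variety $X$ of dimension $d+k$ whose fan refines the normal fans of all these polytopes. This is possible by taking a common refinement and resolving. Each lattice polytope $P$ then corresponds to a nef (basepoint-free) torus-invariant divisor $D_P$ on $X$. The classical Bernstein--Kushnirenko/toric dictionary gives
\[
(D_{L_1}\cdots D_{L_{d+k}}) = (d+k)!\,\MV(L_1,\ldots,L_{d+k}),
\]
and this normalization is uniform across all terms.

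\textbf{Applying Corollary~\ref{cor:monomial}.} Set $\alpha=D_K$, $\beta_i=D_{P_i}$ and $\gamma_j=D_{Q_j}$. Each intersection product appearing in Corollary~\ref{cor:monomial} has total degree $d+k$. The divided powers $\beta^{[I_j]}$ and $\alpha^{[d-|I_j|]}$ match the convention of Remark~\ref{sec_divided power}, where $P^{[S]}$ lists repeated arguments divided by the same factorials. Hence each factor equals $(d+k)!$ times the corresponding mixed-volume factor.

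The left side has $r+(m-r)=m$ factors and the right side has $m$ factors. The powers of $(d+k)!$ therefore cancel, and Corollary~\ref{cor:monomial} yields exactly the claimed inequality. The constant $\binom{B}{I}$ passes through unchanged.

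\textbf{Main obstacle.} The main care needed is the bookkeeping of normalizations, checking that the divided-power convention is consistent on both sides, together with the approximation step. The approximation is routine given the continuity of mixed volumes. Alternatively, one can simply invoke the statement from Section~\ref{sec:relations} that convex body volume polynomials are limits of projective ones, and combine it with Corollary~\ref{cor:monomial} evaluated coefficientwise.
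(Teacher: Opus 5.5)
Your proposal is correct and follows essentially the paper's argument: the paper deduces this corollary from Corollary~\ref{cor:monomial} via Corollary~\ref{cor:projective strong}, whose convex-body half is exactly your toric construction (Proposition~\ref{prop:convex bodies are projective}). Your bookkeeping is also right: each factor picks up the same $(d+k)!$, and with $m$ factors on each side these cancel. Two small remarks. First, resolving to a smooth toric variety is unnecessary, since Corollary~\ref{cor:monomial} holds on any projective variety. Second, for lower-dimensional polytopes you should include a full-dimensional lattice polytope in the Minkowski sum whose normal fan you use, so that the fan is complete and strongly convex.
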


The Alexandrov--Fenchel inequality is the case $d=2$ and $n=1$. 
It would be interesting to understand the equality cases of  Corollaries~\ref{cor:discriminant main} and~\ref{cor: mixed volumes}. For characterizations of equality in special cases, see \cite{Panov} for mixed discriminants and \cite{shenfeld2023extremals} for mixed volumes.

\begin{example}
\
\begin{enumerate}[(1)]\itemsep 2pt
\item Let $E$ be a set of cardinality $1$, and consider the $1$-cover $I=(E,E)$ of $B=2E$. In this case, Corollary~\ref{cor: mixed volumes} specializes to the Alexandrov--Fenchel inequality
\[
\MV(P,P,Q_1,\ldots,Q_k)\MV(K,K,Q_1,\ldots,Q_k)
\leq \MV(P,K,Q_1,\ldots,Q_k)^2,
\]
where $P,K,Q_1,\ldots,Q_k$ are convex bodies in $\mathbb{R}^{k+2}$ and $\MV$ denotes their mixed volume. Note that the constant $\binom{B}{I}=\binom{2}{1} \cdot \binom{2}{1}$ cancels the two factors of $2!$ arising from the divided powers. 
\item Let $E$ be a set of cardinality $4$, and consider a tripartition $E=I_1 \sqcup I_2 \sqcup I_3$. In this case, up to relabeling, Corollary~\ref{cor: mixed volumes} specializes to the inequality
\begin{multline*}
\MV(P_1,P_2,P_3,P_4,Q_1,\ldots,Q_k) \MV(K,K,K,K,Q_1,\ldots,Q_k) \MV(K,K,K,K,Q_1,\ldots,Q_k) \le \\
8  \MV(P_1,K,K,K,Q_1,\ldots,Q_k) \MV(P_2,K,K,K,Q_1,\ldots,Q_k) \MV(P_3,P_4,K,K,Q_1,\ldots,Q_k),
\end{multline*}
where $P_1,P_2,P_3,P_4,K,Q_1,\ldots,Q_k$ are convex bodies in $\mathbb{R}^{k+4}$. 
Equality holds e.g.~when
\[
K=\mathscr{E}_{1}+\mathscr{E}_{2}+\mathscr{E}_{3}+\mathscr{E}_{4}, \ 
P_1=\mathscr{E}_{1}, \ P_2=\mathscr{E}_{2}, \ P_3=P_4=\mathscr{E}_{3}+\mathscr{E}_{4}, \ Q_1=\mathscr{E}_5,\dots,Q_k=\mathscr{E}_{k+4},
\] 
where $\mathscr{E}_i$ is the unit segment in the $i$-th coordinate direction. 
Example~\ref{ex:new inequality} shows that this inequality cannot be formally deduced from the Alexandrov--Fenchel inequality. 
\item Consider the case $k=0$ and $B=\llbracket n \rrbracket$. Taking $P_i=\mathscr{E}_i$, the left-hand side of Corollary~\ref{cor: mixed volumes} is
\[
\MV\left(P^{[B]} \right)^r
\MV\left(K^{[d]} \right)^{m-r}=  \left(\frac{1}{d!}\right)^{r} \cdot \left(\frac{1}{d!}\right)^{m-r} \vol(K)^{m-r}.
\]
On the other hand, 
by the mixed-volume formula for orthogonal projections \cite[Section 5.3]{Schneider:BM}, we have
\[
\MV\left(P^{[I_1]}, K^{[d-|I_1|]} \right) \cdots \MV\left(P^{[I_m]}, K^{[d-|I_m|]} \right) =\frac{1}{d!}\vol(K_{\widehat{I_1}}) \cdots \frac{1}{d!}\vol(K_{\widehat{I_m}}),
\]
where the volume of $K_{\widehat{I_j}}$ is taken in its ambient space $\mathbb{R}^{B \setminus I_j}$.
Thus, in this case, Corollary~\ref{cor: mixed volumes}  becomes the Bollob\'as--Thomason inequality for the convex body $K$. 
\end{enumerate}
\end{example}

\begin{example}\label{ex:optimal}
We note that 
 the constant $\binom{B}{I}$ is best possible  in Theorem~\ref{thm introduction} and Corollary~\ref{cor:monomial}. 
To see this, consider the product of projective spaces
\[
X=\left(\prod_{e\in E} \PP^{B(e)}\right) \times \mathbb{P}^k.
\]
Let $\beta_e$ be the pullback of the hyperplane class from the
$e$-th factor $\PP^{B(e)}$, let
$\alpha$ be the sum of all $\beta_e$, and  let $\gamma$ be the $k$-fold product of the pullback of the hyperplane class from $\mathbb{P}^k$. 
In this case, both sides of the inequality in  Corollary~\ref{cor:monomial} are positive and equal to $\binom{B}{I}/ (B!)^m$.
Similar examples show that the constant $\binom{B}{I}$ cannot be improved  in Corollaries~\ref{cor:discriminant main} and~\ref{cor: mixed volumes}.
\end{example}

\subsection*{Outline of proof of Theorem~\ref{thm introduction}}

We begin with the compact K\"ahler case, which makes
the relation between mixed discriminants of positive semidefinite matrices and integrals of nef $(1,1)$-classes 
explicit.

\begin{enumerate}[(1)]\itemsep 2pt

\item Yau's solution of the Calabi conjecture
\cite[Theorem~1]{yau1977calabi} allows us to prescribe the volume
form of a K\"ahler metric within a fixed K\"ahler class, subject
to the total-volume constraint. Together with H\"older's
inequality, this allows us to transfer the analogue of Theorem~\ref{thm introduction} for positive semidefinite matrices to   the corresponding inequality for nef $(1,1)$-classes (Proposition~\ref{thm:disc=>Kahler}).

\item  By an elementary linear algebra argument, we prove  the analogue of Theorem~\ref{thm introduction} for positive semidefinite matrices when $B$ is a set, which is Corollary~\ref{cor:discriminant main} when
$B$ is a set and $k=0$ (Proposition~\ref{prop:disc-exponents-1}).
By step (1), we obtain
Theorem~\ref{thm introduction} for nef $(1,1)$-classes on compact
K\"ahler manifolds when $B$ is a set. In particular, we obtain
Theorem~\ref{thm introduction} for nef divisors on complex projective varieties when $B$ is a set (Corollary~\ref{cor:exponents1intersection}).

\item Upper truncation and polarization operators preserve projective
volume polynomials; see \cite[Corollary 3.3 and Proposition 4.1]{LinearOperator}. Using these operators
and the arithmetic--geometric mean inequality, we pass from
covers of sets to covers of multisets. This proves
Theorem~\ref{thm introduction}  for nef divisors on complex projective varieties for any multiset $B$. This step has no known analogue for nef $(1,1)$-classes on compact K\"ahler manifolds.

\item  Since  every Hermitian volume polynomial is a
projective volume polynomial,
we may transfer Theorem~\ref{thm introduction}  and Corollary~\ref{cor:monomial} for nef divisors on
complex projective varieties to the corresponding inequality
for positive semidefinite matrices.
This proves Corollary~\ref{cor:discriminant main}.

\item Finally, we apply the transfer technique in step (1) to Corollary~\ref{cor:discriminant main} when $k=0$.
This proves  Theorem~\ref{thm introduction} for nef $(1,1)$-classes on compact
K\"ahler manifolds for any multiset $B$. 
\end{enumerate}

For the algebraic case, we study the volume function on the real
N\'eron--Severi space $N^1(X)_\RR$ of a projective variety $X$ over an algebraically closed field.
Using the theory of restricted volumes in 
\cite{elmnp09restrictedvolume} and \cite{lazamustNewtonOkou}, we prove
Theorem~\ref{thm introduction} directly when $B$ is a set in Section~\ref{sec:volproj}.
This replaces steps~(1) and (2) with
a birational-geometric argument. Step (3) remains valid over any algebraically closed field; see Section~\ref{sec:setmultiset}. 

In Section~\ref{sec:volume}, we review and develop basic properties of various notions of volume polynomials. 
In particular, we show that
every projective volume polynomial over $\CC$ is a K\"ahler volume polynomial (Proposition~\ref{cor:projective are Kahler}),
 every convex body volume polynomial is a projective volume polynomial over any algebraically closed field (Proposition~\ref{prop:convex bodies are projective}), and 
every Hermitian volume polynomial is a projective volume polynomial over any algebraically closed field (Proposition~\ref{hermitian vol}). 
In Section~\ref{sec:mixed disc inequalities}, we prove the transfer theorem (Proposition~\ref{thm:disc=>Kahler}) and 
the analogue of Theorem~\ref{thm introduction} for positive semidefinite matrices when
$B$ is a set (Proposition~\ref{prop:disc-exponents-1}).
Sections~\ref{sec:volproj} and ~\ref{sec:setmultiset} are devoted to the proof of Theorem~\ref{thm introduction} for nef divisors on projective varieties. In Section~\ref{sec:applications}, we collect various special cases of Theorem~\ref{thm introduction}.

\subsection*{Acknowledgements}
June Huh was partially supported by the Simons Investigator Grant
MPS-SIM-00825156 and NSF grant DMS-2053308.  Mateusz Micha{\l}ek was partially supported by the Charles Simonyi Endowment at the Institute for Advanced Study and the DFG grant 580118961.
Botong Wang was partially supported by the NSF grant DMS-1926686 and the Simons Foundation. Jian Xiao was supported by the National Key Research and Development Program of China 2021YFA1002300.
Part of this work was carried out during the 2024--2025 special year on Algebraic and Geometric Combinatorics at the Institute for Advanced Study. The authors thank Sergio Cristancho, Tuong Le, Chayim Lowen, Shouda Wang, and other participants of the special year for helpful discussions during the preparation of this manuscript.

\subsection*{AI disclosure}

LLMs were used for proofreading and editing. ChatGPT-5.6 assisted us in finding Example~\ref{ex:new inequality}.

\section{Different notions of volume polynomials}\label{sec:volume}

\subsection{Definitions}\label{sec:definitions}
Throughout the paper, a variety is a reduced, irreducible, separated scheme of finite type over an algebraically closed field. All compact K\"ahler manifolds are assumed to be connected. 
Limits of polynomials are taken coefficientwise in fixed degree and a fixed number of variables.

In \cite{Minkowski}, Minkowski made a foundational observation on Minkowski sums of convex bodies that has since become a cornerstone of convex geometry:
For any collection of convex bodies $P=(P_1,\ldots,P_n)$  in $\mathbb{R}^d$, the function
\[
f_P:\mathbb{R}^n_{\ge 0} \longrightarrow \mathbb{R}_{\ge 0}, \qquad (x_1,\ldots,x_n) \longmapsto \frac{1}{d!} \vol(x_1P_1+\cdots+x_nP_n)
\]
is a degree $d$ homogeneous polynomial in $x=(x_1,\ldots,x_n)$.
The polynomial $f_P(x)$, called the \emph{volume polynomial} associated to $P$, is then used to define the \emph{mixed volumes} of convex bodies in $P$ as its normalized coefficients
\[
\MV(P_{i_1},\ldots,P_{i_d}) \coloneq \frac{\partial}{\partial x_{i_1}} \cdots \frac{\partial}{\partial x_{i_d}} f_P(x_1,\ldots,x_n).
\]
The mixed volume $\MV$ is a symmetric multilinear function on $d$-tuples of convex bodies in $\mathbb{R}^d$. 
For a comprehensive introduction to the theory of mixed volumes, see \cite[Section 5.1]{Schneider:BM}. 
A \emph{realizable convex body volume polynomial} is a nonnegative rational multiple of a polynomial of the form $f_P(x)$, where each convex body in $P$ is a rational convex polytope. A homogeneous polynomial with real coefficients is called a \emph{convex body volume polynomial} if it is a limit of realizable convex body volume polynomials.

\begin{proposition}
A degree $d$ homogeneous polynomial  is a convex body volume polynomial if and only if it is of the form $f_P(x)$ for some collection of convex bodies $P$ in $\mathbb{R}^d$.
\end{proposition}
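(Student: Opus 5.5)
The statement has two directions, and I would prove each by approximation, using continuity of mixed volumes in the Hausdorff metric.

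\emph{Every $f_P$ is a convex body volume polynomial.} Let $P=(P_1,\ldots,P_n)$ be convex bodies in $\RR^d$. For each $i$, choose rational convex polytopes $P_i^{(t)}$ converging to $P_i$ in the Hausdorff metric. For example, take the convex hull of a finite rational $1/t$-net of a $1/t$-neighborhood of $P_i$. Mixed volumes are continuous in the Hausdorff metric \cite[Section 5.1]{Schneider:BM}, and the coefficients of $f_{P^{(t)}}$ are, up to multinomial factors, mixed volumes. Hence $f_{P^{(t)}}\to f_P$ coefficientwise, and each $f_{P^{(t)}}$ is realizable with multiplier $1$.

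\emph{Every limit of realizable polynomials is some $f_P$.} First I absorb the rational scalar: $c\,f_Q=f_{c^{1/d}Q}$ for $c\ge 0$. If $c=0$, take every $P_i$ to be a point. So every realizable polynomial has the form $f_Q$ for a tuple of convex bodies $Q$. Now suppose $f_{Q^{(t)}}\to g$ coefficientwise. I would like to extract a Hausdorff-convergent subsequence via the Blaschke selection theorem, but the bodies need not stay bounded.

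The main obstacle is bodies that are thin but long, such as a segment of length $L$ with $L\to\infty$. Such a body has bounded mixed volumes but diverging diameter. Two tools handle this.

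\begin{enumerate}[(1)]
\item \textbf{Translation.} Translate each $Q_i^{(t)}$ so that it contains the origin. This changes no mixed volume.
\item \textbf{Simultaneous linear change of coordinates.} Mixed volumes are invariant under a common volume-preserving map: $\MV(gK_1,\ldots,gK_d)=|\det g|\,\MV(K_1,\ldots,K_d)$. So I may replace the whole tuple $Q^{(t)}$ by $g_tQ^{(t)}$ with $g_t\in SL_d(\RR)$.
\end{enumerate}

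Consider the Minkowski sum $S_t=Q_1^{(t)}+\cdots+Q_n^{(t)}$.

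\emph{Case where $\vol(S_t)$ stays bounded below.} Its volume $d!\,f_{Q^{(t)}}(1,\ldots,1)$ converges to $d!\,g(1,\ldots,1)$. If this limit is positive, John's theorem gives $g_t\in SL_d$ and translations such that $g_tS_t$ is sandwiched between balls of radii $\rho_t$ and $d\rho_t$, with $\rho_t$ bounded above and below. Then each $g_tQ_i^{(t)}$ lies in a fixed ball. Blaschke selection yields a subsequence with $g_tQ_i^{(t)}\to P_i$, and continuity gives $g=f_P$.

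\emph{Degenerate case.} If $g(1,\ldots,1)=0$, then $g$ has nonnegative coefficients, so $g=0$. Indeed, each coefficient is a nonnegative mixed volume bounded by a constant times $\vol(S_t)$, by monotonicity. Then $g=f_P$ with all $P_i$ points.

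The step needing the most care is the John normalization: checking that a bound on $S_t$ bounds each summand after translation. Containing the origin, each summand lies in a translate of the sum, so the bound on $g_tS_t$ transfers to every $g_tQ_i^{(t)}$.
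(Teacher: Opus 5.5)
Your argument is correct. Your first direction is the same as the paper's.

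For the converse, the paper gives no argument of its own. It invokes Shephard's compactness theorem, which states exactly that coefficientwise limits of volume polynomials of $n$-tuples of convex bodies in $\RR^d$ are again of this form. You are in effect reproving that theorem:
\begin{enumerate}[(1)]
\item Normalize the Minkowski sum $S_t$ by its John ellipsoid and an element of $SL_d(\RR)$, which leaves $f_{Q^{(t)}}$ unchanged.
\item Use that each summand containing the origin lies in $S_t$ to get uniform boundedness.
\item Apply Blaschke selection and continuity of mixed volumes.
\item Dispose of the case $g(1,\ldots,1)=0$ by nonnegativity of the coefficients.
\end{enumerate}
This costs some length but makes the proof self-contained. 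It is the convex-geometric analogue of the paper's own proof of the Hermitian statement. There, $S=M_1+\cdots+M_n$ is normalized to $\Id$ by conjugating with $S^{-1/2}$ before a compactness argument. In both cases, a linear change of coordinates that changes the volume polynomial by at most a positive scalar is what restores compactness.

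One point deserves a more careful statement. The John center $g_tc_t$ of $g_tS_t$ need not stay bounded a priori. This is harmless, for either of two reasons. Mixed volumes are translation invariant in each argument separately, so you may replace $g_tQ_i^{(t)}$ by $g_tQ_i^{(t)}-g_tc_t$, which lies in the ball of radius $d\rho_t$ about the origin. Alternatively, $0\in g_tS_t$ forces $|g_tc_t|\le d\rho_t$, so every $g_tQ_i^{(t)}$ lies in the ball of radius $2d\rho_t$ about the origin.
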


\begin{proof}
This is observed in \cite[Remark~4.3]{branden2020lorentzian}.
Since every convex body is the limit of rational convex polytopes, the polynomial $f_P(x)$ for any collection of convex bodies $P$ is a convex body volume polynomial. Conversely, by the compactness theorem of Shephard \cite[Theorem~1]{Shephard},  every convex body volume polynomial is of the above form for some collection of convex bodies.
\end{proof}

Let $M=(M_1,\ldots,M_n)$ be a collection of $d \times d$ positive semidefinite Hermitian matrices. The \emph{volume polynomial} associated to $M$ is the homogeneous polynomial
\[
f_M(x)=\frac{1}{d!}\det(x_1M_1+\cdots+x_nM_n).
\]
The \emph{mixed discriminants} of matrices in $M$ are defined as its normalized coefficients
\[
\D[M_{i_1},\dots,M_{i_d}]\coloneq \frac{\partial}{\partial x_{i_1}}\cdots\frac{\partial}{\partial x_{i_d}}f_M(x_1,\ldots,x_n).
\]
The mixed discriminant $\D$ is a symmetric multilinear function on $d$-tuples of $d \times d$ Hermitian matrices. 
For basic properties of mixed discriminants, we refer to \cite[Section 5.5]{Schneider:BM}. 
A \emph{realizable Hermitian volume polynomial} is a nonnegative rational multiple of a polynomial of the form $f_M(x)$, where every entry of every matrix in $M$ has rational real and imaginary parts. A homogeneous polynomial with real coefficients is called a \emph{Hermitian volume polynomial} if it is a limit of realizable Hermitian volume polynomials.

\begin{proposition}
 A degree $d$ homogeneous polynomial  is a Hermitian volume polynomial if and only if it is of the form $f_M(x)$ for some collection of $d \times d$ positive semidefinite matrices $M$.
\end{proposition}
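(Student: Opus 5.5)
The statement asserts two inclusions, and I will prove them separately. The realizable Hermitian volume polynomials are the rational multiples $c\,f_M$ with $M$ having Gaussian-rational entries. The claim is that their limits are exactly the polynomials $f_M$ with $M$ an arbitrary collection of $d\times d$ positive semidefinite matrices. The argument parallels the convex body case. Density gives one direction, and a compactness argument, replacing Shephard's theorem, gives the other.

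\emph{Every $f_M$ is a Hermitian volume polynomial.} Given positive semidefinite $M_1,\dots,M_n$, I write each as $M_i=U_i^*\Lambda_iU_i$ and approximate $U_i$ and $\Lambda_i$ by matrices with Gaussian-rational entries. More simply, I write $M_i=C_i^*C_i$ and approximate $C_i$ entrywise by Gaussian-rational matrices $C_i^{(t)}$. Then $M_i^{(t)}=(C_i^{(t)})^*C_i^{(t)}$ is positive semidefinite with Gaussian-rational entries, and $M_i^{(t)}\to M_i$. The coefficients of $f_M$ are polynomial in the entries, so $f_{M^{(t)}}\to f_M$ coefficientwise, and each $f_{M^{(t)}}$ is realizable with constant $1$.

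\emph{Every limit has the form $f_M$.} Let $c_t f_{M^{(t)}}\to g$ with $c_t\ge0$ rational. First I absorb the scalar. Since $c\det(N)=\det(c^{1/d}N)$ and $c^{1/d}N$ is still positive semidefinite, the limit has the form $\lim f_{N^{(t)}}$ with each $N^{(t)}$ positive semidefinite; rationality is irrelevant at this stage. The main obstacle is compactness, because the matrices $N_i^{(t)}$ need not be bounded even when $f$ converges. For instance, $\det$ is insensitive to degenerate directions: $\operatorname{diag}(t,0)$ has determinant $0$. I handle this by exploiting the $SL_d(\CC)$-invariance $f_{N}=f_{PNP^*}$ for $|\det P|=1$, which lets me normalize the tuple. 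Concretely, I set $S^{(t)}=\sum_i N_i^{(t)}$. Because $\det(S^{(t)})\to d!\,g(1,\dots,1)$, there are two cases.

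\textbf{Case $g(1,\dots,1)>0$.} Here $\det S^{(t)}$ is bounded away from $0$. I conjugate by $P_t=(\det S^{(t)})^{1/(2d)}(S^{(t)})^{-1/2}$, which satisfies $|\det P_t|=1$ and sends $S^{(t)}$ to $(\det S^{(t)})^{1/d}\,\mathrm{Id}$. The sum is now bounded, so each summand is bounded, since $0\preceq N_i\preceq S$. Passing to a subsequence, the tuples converge to a positive semidefinite tuple $M$, and $f_M=g$ by continuity.

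\textbf{Case $g(1,\dots,1)=0$.} Every coefficient of $g$ is a limit of mixed discriminants of positive semidefinite matrices. These are nonnegative, and their sum with multinomial weights is $g(1,\dots,1)=0$, so $g=0$. Then $g=f_M$ with all $M_i=0$.

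This completes the plan. The only nonroutine input is the normalization in the first case, which plays the role of Shephard's compactness theorem in the convex body proposition.
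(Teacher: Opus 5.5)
Your proof is correct and follows essentially the same route as the paper: you normalize the tuple by congruence with $S^{-1/2}$ so that the sum becomes a multiple of the identity, extract a convergent subsequence by compactness, and dispose of the degenerate case separately. The differences are only cosmetic: you absorb the scalar into the matrices and handle degeneracy at the level of the limit ($g(1,\dots,1)=0$) rather than per tuple, and you spell out the density direction that the paper leaves implicit.
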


\begin{proof}
Let $\mathcal H$ be the set of polynomials of the form $f_M(x)$, where $M=(M_1,\ldots,M_n)$ is a collection of $d \times d$ positive semidefinite matrices. It is enough to show that $\mathcal{H}$ is closed.

\emph{Step 1}: We prove that if $S \coloneq M_1+\dots+M_n$ is degenerate, then
\[
\det(x_1M_1+\cdots+x_nM_n)=0.
\]
Indeed, in this case, there is a nonzero vector $v$ in its kernel, so that 
\[
v^*(M_1+\cdots+M_n)v=0.
\]
Since each $M_i$ is positive semidefinite, we must have $v^*M_iv=0$ for every $i$. It follows that $M_iv=0$ for every $i$, and hence $x_1M_1+\cdots+x_nM_n$ is degenerate for any $x \in \mathbb{R}^n$.

\emph{Step 2}: If $f_M(x)$ is nonzero, the previous step shows that $S$ is positive definite. Thus, in this case, $S$
 has a well-defined positive definite square root $S^{1/2}$. By the multiplicativity of determinants, we have 
\[
f_M(x)=\frac{1}{d!}\det\left(\sum_{i=1}^n x_iM_i\right)=\frac{1}{d!}(\det S)\det\left(\sum_{i=1}^n x_iN_i\right), \ \ \text{where $N_i=S^{-1/2}M_iS^{-1/2}$,}
\]
and the matrices $N_i$ remain positive semidefinite. Thus, either $f_M(x)$ is zero or it is a positive multiple of $f_N(x)$, where $N=(N_1,\ldots,N_n)$ is a collection of $d \times d$ positive semidefinite matrices whose sum is the identity matrix.

\emph{Step 3}: Let $\mathcal{K}$ be the set of $n$-tuples of $d\times d$ Hermitian matrices
\[
\left\{N=(N_1,\dots,N_n)\;\middle\vert\;\sum_{i=1}^n N_i=\Id,\ \text{$N_i$ is positive semidefinite for every $1\leq i\leq n$}\right\}.
\]
This set is closed and bounded, and hence compact. The function $f$ which maps $N$ to $f_N(x)$ is continuous, and therefore the image $f(\mathcal{K})$ of $\mathcal{K}$ is compact, and hence closed. Further, $f(\mathcal{K})$ does not contain zero. Since $\mathcal{H}$ is the set of all nonnegative multiples of polynomials in $f(\mathcal{K})$, it is closed as well.
\end{proof}

Let $\beta=(\beta_1,\ldots,\beta_n)$ be a collection of nef divisors on a $d$-dimensional projective variety $X$ over an algebraically closed field $\mathbb{K}$. The \emph{volume polynomial} associated to $\beta$ is
\[
f_\beta(x)=\frac{1}{d!}\deg_X (x_1\beta_1 +\cdots + x_n\beta_n)^d.
\]
A \emph{realizable projective volume polynomial over $\mathbb{K}$} is a nonnegative rational multiple of a polynomial of the form $f_\beta(x)$, where each divisor in $\beta$ is semiample.\footnote{A $\mathbb{Q}$-divisor is \emph{semiample} if some positive integer multiple of it is a base-point free divisor \cite[Section 2.1]{lazarsfeldPosI}.} A homogeneous polynomial with real coefficients is called a \emph{projective volume polynomial over $\mathbb{K}$} if it is a limit of realizable projective volume polynomials over $\mathbb{K}$.
Similarly, if $\delta=(\delta_1,\ldots,\delta_n)$ is a collection of nef $(1,1)$-classes on a $d$-dimensional compact K\"ahler manifold $Y$, the \emph{volume polynomial} associated to $\delta$ is the homogeneous polynomial
\[
f_\delta(x)\coloneq \frac{1}{d!}\int_Y (x_1\delta_1+\cdots+x_n\delta_n)^d.
\]
A \emph{K\"ahler volume polynomial} is a limit of polynomials of the form $f_\delta(x)$. 

\begin{remark}
A number of basic questions on various volume polynomials remain  unanswered. We record some of them here.
\begin{enumerate}[(1)]\itemsep 2pt
\item Is every projective volume polynomial of the form $f_\beta(x)$ for some collection of nef real divisors $\beta$ on a projective variety? 
\item Is every K\"ahler volume polynomial of the form $f_\delta(x)$ for some collection of nef $(1,1)$-classes $\delta$ on a compact K\"ahler manifold?
\item Is the set of projective volume polynomials over $\mathbb{C}$ properly contained in the set of K\"ahler volume polynomials?
\end{enumerate}
\end{remark}

\subsection{Relations}\label{sec:relations}

We compare the sets of different types of volume polynomials.

\begin{proposition}\label{cor:projective are Kahler}
Every projective volume polynomial over $\CC$ is a K\"ahler volume polynomial.
\end{proposition}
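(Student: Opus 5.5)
Since the set of K\"ahler volume polynomials is defined to be closed under limits, it suffices to show that every realizable projective volume polynomial over $\CC$ is a K\"ahler volume polynomial. Such a polynomial has the form $c\, f_\beta(x)$ with $c\in\QQ_{\ge 0}$, where $\beta=(\beta_1,\ldots,\beta_n)$ are semiample $\QQ$-divisors on a $d$-dimensional complex projective variety $X$. Two issues separate this from $f_\delta$: $X$ may be singular, and there is a scalar $c$. For the scalar: if $c=0$ the polynomial is zero. Zero is the volume polynomial of the zero classes on any compact K\"ahler manifold, or a limit of $\epsilon f_\delta$. If $c>0$, write $c=\lambda^d$ with $\lambda=c^{1/d}>0$. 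Then $c f_\beta = f_{\lambda\beta}$, and positive real multiples of nef classes are nef, so the scalar can be absorbed.

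For singularities, take a resolution $\pi\colon \widetilde X\to X$, which exists by Hironaka and is a projective birational morphism from a smooth projective variety. Then $\widetilde X$ is a compact K\"ahler manifold, being smooth and projective over $\CC$. Pulling back, $\pi^*\beta_i$ is again a semiample $\QQ$-divisor, hence nef. Its first Chern class $\delta_i\coloneq c_1(\pi^*\beta_i)$ is a nef real $(1,1)$-class: a multiple $m\pi^*\beta_i$ is base-point free, so it is the pullback of $\mathcal O(1)$ under a morphism to projective space. The pullback of the Fubini--Study form is then a smooth semipositive representative, and adding $\epsilon\omega$ for a K\"ahler form $\omega$ shows that $\delta_i$ lies in the closure of the K\"ahler cone. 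By the projection formula and birationality (degree one),
\[
\deg_X\bigl((x_1\beta_1+\cdots+x_n\beta_n)^d\bigr)=\deg_{\widetilde X}\bigl((x_1\pi^*\beta_1+\cdots+x_n\pi^*\beta_n)^d\bigr)=\int_{\widetilde X}(x_1\delta_1+\cdots+x_n\delta_n)^d .
\]
The last equality is the compatibility of algebraic intersection numbers of line bundles with integration of Chern classes on a smooth projective variety. This gives $f_\beta=f_\delta$.

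The main point requiring care is the nefness of $\delta_i$ in the K\"ahler sense. The semiample hypothesis in the definition of realizable polynomials makes this immediate via the pullback of the Fubini--Study metric, so no deeper result, such as Demailly--P\u{a}un or the equivalence of algebraic and analytic nefness, is needed. Combining the steps, every realizable projective volume polynomial is of the form $f_\delta$, and passing to limits proves the proposition.
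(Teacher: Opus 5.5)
Your proof is correct and follows the same route as the paper. You reduce to realizable polynomials, take a Hironaka resolution, and note that the pulled-back semiample divisors give nef $(1,1)$-classes with the same intersection numbers on the smooth projective, hence compact K\"ahler, model. You also spell out details the paper leaves implicit: absorbing the rational scalar via $c f_\beta = f_{c^{1/d}\beta}$, and verifying nefness by pulling back the Fubini--Study form.
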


\begin{proof}
Let $f_\beta(x)$ be the volume polynomial associated to a collection of semiample divisors $\beta$ on a complex projective variety $X$. Using Hironaka's resolution of singularities \cite{Hironaka}, we may find a projective birational map $Y \to X$ with $Y$ smooth. Since $Y$ is a compact K\"ahler manifold and the pullback of $\beta$ consists of nef $(1,1)$-classes, we have the statement.
\end{proof}

\begin{proposition}\label{prop:convex bodies are projective}
 Every convex body volume polynomial is a projective volume polynomial over any algebraically closed field.
\end{proposition}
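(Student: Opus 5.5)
The plan is to reduce to realizable convex body volume polynomials, realize those by toric varieties, and then pass to limits. By definition, a convex body volume polynomial is a limit of nonnegative rational multiples of $f_P(x)$, where $P=(P_1,\ldots,P_n)$ consists of rational convex polytopes in $\mathbb{R}^d$. The set of projective volume polynomials over $\mathbb{K}$ is defined as a set of limits of realizable ones, so it is closed. It is also closed under nonnegative rational scaling, since realizable projective volume polynomials are. It therefore suffices to show that $f_P(x)$ is a projective volume polynomial over $\mathbb{K}$ for each tuple $P$ of rational polytopes. Moreover, $f_{\lambda P}(x)=\lambda^d f_P(x)$ for $\lambda>0$, and a translation of $P_i$ does not change $f_P$. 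So we may assume each $P_i$ is a lattice polytope.

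Next we handle dimension. If $P_1+\cdots+P_n$ is not full-dimensional, then $f_P=0$, which is trivially realizable. Otherwise, let $\Sigma$ be a smooth projective fan refining the normal fan of $Q\coloneq P_1+\cdots+P_n$. The normal fan of $Q$ refines the normal fan of each $P_i$, so $\Sigma$ refines all of them. A smooth projective refinement exists by toric resolution of singularities; the combinatorial subdivision works over any field, and I would only need the refinement to be projective. Let $X_\Sigma$ be the associated smooth projective toric variety over $\mathbb{K}$, which has dimension $d$.

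Each lattice polytope $P_i$, whose normal fan is coarsened by $\Sigma$, defines a torus-invariant Cartier divisor $D_{P_i}$ on $X_\Sigma$. This divisor is base-point free: its global sections, spanned by the characters in $P_i\cap\mathbb{Z}^d$, generate the line bundle on each affine chart. Hence $D_{P_i}$ is semiample. The standard toric facts I would cite are:
\begin{itemize}
\item for lattice polytopes compatible with $\Sigma$, $D_{P+P'}=D_P+D_{P'}$;
\item for a nef divisor, $\deg_{X_\Sigma}(D_P^d)=d!\,\vol(P)$.
\end{itemize}
Both hold over any algebraically closed field, since intersection numbers on toric varieties are computed combinatorially, for instance via Riemann--Roch/Ehrhart asymptotics or the Khovanskii description.

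From these two facts, for nonnegative integers $x_i$ we get
\[
\frac{1}{d!}\deg\Bigl(\sum_i x_iD_{P_i}\Bigr)^d=\frac{1}{d!}\deg\bigl(D_{\sum_i x_iP_i}\bigr)^d=\vol\Bigl(\sum_i x_iP_i\Bigr)=d!\,f_P(x).
\]
The two sides are polynomials that agree on $\mathbb{Z}_{\geq0}^n$, so they agree identically. Therefore $f_P$ equals $\frac{1}{d!}f_\beta$ with $\beta=(D_{P_1},\ldots,D_{P_n})$ semiample, which is realizable. Note the normalizations: $f_P$ carries the factor $\frac{1}{d!}$ and $f_\beta$ carries $\frac{1}{d!}$, and the extra $\frac{1}{d!}$ is a rational scalar, so it is allowed.

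I expect the main obstacle to be justifying the identity $\deg(D_P^d)=d!\vol(P)$ and the additivity $D_{P+P'}=D_P+D_{P'}$ in arbitrary characteristic, along with the existence of the smooth projective refinement. I would cite standard references, such as Fulton's \emph{Introduction to Toric Varieties}, Section 5.3. The argument there is characteristic-free because it computes $h^0(mD_P)=\#(mP\cap\mathbb{Z}^d)$ and uses the vanishing of higher cohomology for nef toric divisors (Demazure vanishing), which holds over any field. Smoothness is in fact inessential: a complete projective simplicial or arbitrary toric variety with the common refinement works equally well, since we only need a projective variety with these semiample divisors.
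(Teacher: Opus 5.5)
Your proposal is correct and follows the paper's route. You reduce to realizable polynomials $f_P$ with rational polytopes, rescale to lattice polytopes, and realize $f_P$, up to the rational factor $1/d!$, by the semiample toric divisors $D_{P_i}$ on a projective toric variety whose fan refines the normal fans of the $P_i$. The paper simply cites Fulton's Section~5.4 on mixed volumes for the toric identity that you spell out; as you note yourself, smoothness of the fan is not needed.
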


\begin{proof}
It is enough to show that every realizable convex body volume polynomial is a realizable projective volume polynomial over $\K$. 
Let $f_P(x)$ be the volume polynomial associated to a collection of rational polytopes $P$. 
Scaling by positive integers, we may assume that each polytope in $P$ is a lattice polytope.  In this case, it follows from the construction of toric varieties that $f_P(x)$ is a realizable projective volume polynomial over $\K$; see  \cite[Section~5.4]{fulton1993introduction}. 
\end{proof}

We now show that every Hermitian volume polynomial is a projective volume polynomial. The argument works more generally for positive semidefinite matrices over the noncommutative algebra of quaternions $\mathbb{H}$; see \cite[Chapter 1]{Voight} for background on quaternions. 
As in the complex case, we define an $\mathbb H$-Hermitian matrix $M$ to be a square matrix with quaternionic entries such that $M$ is equal to its conjugate transpose. 
For a quaternionic matrix, 
we will always refer to right eigenvalues as the eigenvalues. 
An $\mathbb{H}$-Hermitian matrix is positive definite if and only if its eigenvalues are positive, and 
 an $\mathbb{H}$-Hermitian matrix is positive semidefinite if and only if its eigenvalues are nonnegative \cite[Remark~6.1]{zhang1997quaternions}. When a distinction is necessary, we refer to the usual Hermitian matrices as $\mathbb{C}$-Hermitian matrices.

Due to the lack of commutativity, there are several concepts of determinants for a quaternionic matrix $M$. 
We will use the Moore determinant introduced in \cite{dresden1922twenty}, denoted  $\det M$. We recall the definition as in \cite{MooreDeterminantFormula}.
\begin{definition}
Let $M=(m_{ij})$ be an $\mathbb{H}$-Hermitian matrix of size $d$. Any permutation $\sigma\in S_d$ can be uniquely written as a composition of disjoint cycles
\[
\sigma=(i_{11}\cdots i_{1q_1})(i_{21}\cdots i_{2q_2})\cdots(i_{t1}\cdots i_{tq_t}),
\]
where $i_{a1}<i_{ab}$ for all $b>1$ and $i_{11}>i_{21}>\cdots>i_{t1}$.
We define the \emph{Moore determinant} by
\[
\det(M)=\sum_{\sigma\in S_d}\sgn(\sigma)m_{i_{11}i_{12}}\cdots m_{i_{1q_1}i_{11}}m_{i_{21}i_{22}}\cdots m_{i_{tq_t}i_{t1}},
\]
where $\sgn(\sigma)$ is the sign of the permutation and a one-cycle contributes its diagonal entry.
\end{definition}

The Moore determinant of an $\mathbb{H}$-Hermitian matrix $M$ satisfies the following properties; see \cite[Section 1]{alesker2003non} for an overview.
\begin{enumerate}[(1)]\itemsep 2pt
\item $\det(M)$ is a real number.
\item $\det(M)$ is the product of eigenvalues of $M$.
\item $\det(M)$ vanishes if and only if there exists a nonzero vector in the kernel of $M$.
\item $\det(M)$ coincides with the usual determinant when the entries of $M$ are complex numbers.
\end{enumerate}
Using these properties one may extend the notion of mixed discriminant to the quaternionic case \cite[Definition~1.1.14]{alesker2003non}. Relatedly, for  a collection of $d \times d$ positive semidefinite $\mathbb{H}$-Hermitian matrices $M=(M_1,\ldots,M_n)$, we may define the \emph{volume polynomial} associated to $M$ to be the homogeneous polynomial
\[
f_M(x)=\frac{1}{d!}\det(x_1M_1+\cdots+x_nM_n).
\]
We say that $f_M(x)$ is a \emph{realizable $\mathbb{H}$-Hermitian volume polynomial} when every entry of every matrix in $M$ has rational real and $i$, $j$, $k$ parts. A homogeneous polynomial with real coefficients is called an \emph{$\mathbb{H}$-Hermitian volume polynomial} if it is a limit of realizable $\mathbb{H}$-Hermitian volume polynomials.

In \cite[Corollary 1.1.16]{alesker2003non}, Alesker proved that analogues of the Alexandrov--Fenchel inequality hold for mixed discriminants of positive semidefinite $\mathbb{H}$-Hermitian matrices.  This result implies that every $\mathbb{H}$-Hermitian volume polynomial is a  Lorentzian polynomial. The following proposition shows that $\mathbb{H}$-Hermitian volume polynomials are, in fact, projective volume polynomials.

\begin{proposition}\label{hermitian vol}\hfill
\begin{enumerate}[(1)]\itemsep 2pt
\item Every $\mathbb{C}$-Hermitian volume polynomial is a projective volume polynomial over any algebraically closed field.
\item Every $\mathbb{H}$-Hermitian volume polynomial is a projective volume polynomial over any algebraically closed field of positive characteristic.
\end{enumerate}
\end{proposition}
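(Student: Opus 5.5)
We will realize $f_M$ directly as the volume polynomial of a flag-type projective variety. Throughout we reduce to realizable polynomials, since the set of projective volume polynomials over $\K$ is closed under limits and nonnegative rational scaling by definition.

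\emph{Reduction to rank-one matrices.} Every positive semidefinite $\mathbb{C}$-Hermitian matrix $M_i$ is a sum of rank-one matrices $v v^*$. By multilinearity of the determinant in the Cauchy--Binet expansion, $f_M(x)$ is obtained from a volume polynomial $f_N(y)$ of rank-one matrices $N_{ij}=v_{ij}v_{ij}^*$ by the substitution $y_{ij}=x_i$. Such diagonal substitutions (identifying variables) preserve projective volume polynomials: on the variety they correspond to taking the nef divisor $\sum_j \beta_{ij}$. For rank-one matrices,
\[
\det\Big(\sum_j y_j v_jv_j^*\Big)=\sum_{|S|=d} \Big(\prod_{j\in S} y_j\Big)\,|\det(v_S)|^2 ,
\]
a multiaffine polynomial whose coefficients are $|\det v_S|^2$.

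\emph{Geometric realization.} For part (1) we first handle integral vectors. The matrix $V=(v_j)$ defines a realizable configuration, and we seek nef divisors whose intersection numbers are $|\det v_S|^2$. Write $|\det v_S|^2=\det(v_S)\overline{\det(v_S)}$. This is the Cauchy--Binet coefficient of $\det(V W^{T})$ with $W=\overline V$. Writing real and imaginary parts, $|\det v_S|^2=\det(U_S)$ for the real $2d\times 2d$ block matrix $U$ attached to each $v_j$ (two columns), so with $\mathbb{Z}$-entries after scaling.

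Next we use the standard fact (e.g.\ from the theory of zonotopes/toric varieties) that the polynomial
\[
\sum_{|S|=d}\Big(\prod_{j\in S}y_j\Big)|\det(w_S)|
\]
for real vectors $w_j$ is the volume polynomial of the zonotope $\sum y_j[0,w_j]$. Hence it is a convex body volume polynomial, and projective by Proposition~\ref{prop:convex bodies are projective}. This gives absolute values of determinants, but squares are needed. Squares come from a product construction: $\det(v_S)\det(\bar v_S)$ is the coefficient appearing in the volume polynomial of $\sum y_j([0,a_j]\times[0,b_j])$-type bodies. More cleanly, we use the Grassmannian. Let $\Lambda\subset\K^N$ be the row space of the $d\times N$ matrix $V$, let $G=\mathrm{Gr}$, and use the torus-orbit closure $X=\overline{T\cdot\Lambda}$, whose moment polytope is the matroid polytope. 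Coefficients there are only $0/1$, however, so the stronger tool we will actually use is the following: the determinant with rank-one terms $y_jv_jv_j^*$ is the volume polynomial of the \emph{Hermitian} form. To obtain it as an intersection number, take $Y=\prod$ of projective spaces, or better, the pullback of the hyperplane classes under the linear projections $\PP^{d-1}\dashrightarrow$ defined by $v_j$.

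Concretely, on $X=(\PP^{d-1})^d$ blown up appropriately, consider the divisors $\beta_j=\sum_{k}$ of the pullback of $\{ \ell_{v_j}=0\}$ in the $k$-th factor. Their top intersection counts solutions of $d$ linear systems, giving permanents rather than determinants. We will correct this by passing to the variety of complete flags, or equivalently by working on the Grassmannian $\mathrm{Gr}(d,2d)$ with Schubert-type nef classes. The intersection of $d$ such classes cut out by linear conditions $\langle \cdot, v_j\rangle$, $\langle\cdot,\bar v_j\rangle$ yields $\det(v_S)\det(\bar v_S)$ by the Cauchy--Binet structure of Plücker coordinates.

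\emph{Main obstacle and part (2).} The hard step is producing a nef (semiample) divisor attached to each matrix whose mixed intersections are exactly the mixed discriminants, defined over an arbitrary algebraically closed field. We will try the approach above, checking nefness via base-point-freeness of the linear systems, and verifying the intersection count by a degeneration to a transverse configuration. For part (2), positive characteristic should be used to embed quaternion algebra data: over $\mathbb{F}_p$-type fields the quaternion algebra splits, so quaternionic matrices become $2d\times 2d$ complex-like matrices whose determinant is $\det_{\mathrm{Moore}}^2$. After that, we need to take a square root via divided-power or Frobenius arguments, reducing part (2) to part (1).
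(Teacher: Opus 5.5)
\textbf{There is a genuine gap: the key construction is missing, and the plan for (2) does not work.}

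Your reduction to rank-one matrices via Cauchy--Binet is correct. After it, everything rests on one step: realizing $\sum_{|S|=d} y^S\,|\det v_S|^2$ as a limit of volume polynomials of semiample divisors over an arbitrary algebraically closed field. The proposal never carries this out; you call it the ``main obstacle'' yourself.

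None of the candidates you list can supply it:
\begin{itemize}
\item Zonotopes give $|\det w_S|$, not its square.
\item Torus-orbit closures in a Grassmannian give $0/1$ coefficients.
\item On a homogeneous variety such as $(\PP^{d-1})^d$, $\mathrm{Gr}(d,2d)$, or a flag variety, the numerical class of a divisor cut out by a linear condition $\langle\cdot,v_j\rangle$ does not depend on $v_j$, because the connected group $GL$ moves one such divisor to another. So no intersection number of such classes can equal $|\det v_S|^2$. On $\mathrm{Gr}(d,2d)$, which has Picard rank one and dimension $d^2$, every tuple of nef divisors has volume polynomial equal to a power of a single linear form, and of the wrong degree. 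The unspecified ``appropriate blow-ups'' are not a construction.
\item The conjugate $\bar v_j$ has no meaning over a field of positive characteristic, so this mechanism could not give ``over any algebraically closed field'' even if it worked over $\CC$.
\end{itemize}

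Your plan for (2) fails independently. Passing to the complex $2d\times 2d$ adjoint matrices produces a positive multiple of $f_M(x)^2$, which has degree $2d$. Nothing in the theory of volume polynomials lets you pass from $g^2$ to $g$; the available implication goes the other way, via products of varieties. Also, $\mathbb{H}\otimes_\RR\CC\cong M_2(\CC)$ in every characteristic, so positive characteristic does no work in your argument. That is a sign the argument is not addressing the real difficulty.

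\textbf{The missing idea: the N\'eron--Severi space itself must carry the Hermitian matrices, which happens for abelian varieties.}

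The paper takes $X=E^d$, where $E$ is an elliptic curve with:
\begin{itemize}
\item $\End(E)_\QQ$ imaginary quadratic for (1) (a CM curve in characteristic zero, an ordinary curve in characteristic $p$);
\item $\End(E)_\RR\cong\mathbb{H}$ for (2). Such supersingular curves exist only in positive characteristic, and this is the actual reason for that hypothesis.
\end{itemize}
Via the Rosati involution, $\NS_\RR(X)$ is identified with the space of $\mathbb{F}$-Hermitian matrices, with the nef cone equal to the PSD cone. An irreducibility argument for the hypersurface $\det=0$ then gives $\deg_X(\alpha^d)=c\det\psi(\alpha)$.

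Within your rank-one framework, the same idea yields an explicit repair of (1):
\begin{itemize}
\item Let $\mathcal O\subseteq\End(E)$ be the CM order and $K=\mathcal O\otimes\QQ$. For $v\in\mathcal O^d$, set $\beta_v=\phi_v^*[\mathrm{pt}]$, where $\phi_v\colon E^d\to E$ is $x\mapsto\sum_k v_kx_k$. This divisor is semiample.
\item Then $\prod_{j\in S}\beta_{v_j}$ equals the degree of the endomorphism of $E^d$ given by the matrix $V_S$. That degree is $N_{K/\QQ}(\det V_S)=|\det V_S|^2$, by computing on the Tate module, which is free of rank $d$ over $K\otimes\QQ_\ell$.
\item Density of $K$ in $\CC$, rational rescaling, and your variable-identification step then finish (1).
\end{itemize}
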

\begin{proof}
The proofs of the two statements are essentially identical. We choose an elliptic curve $E$ with nontrivial endomorphisms;  for the existence, see \cite[Chapter V]{Silverman}. For (1), we choose an elliptic curve $E$ such that $\End(E)_\QQ$ is an imaginary quadratic field. 
For (2), we choose an elliptic curve $E$ such that $\End(E)_\RR \simeq\mathbb{H}$. We will write $\mathbb{F}$ for $\CC$ in the first case and for $\mathbb{H}$ in the second case.
Let $X$ be the $d$-fold product $E^d$. We have an identification of the endomorphism algebra of the abelian variety $X$, tensored with $\RR$, with the ring of $d\times d$ matrices with entries in $\mathbb{F}$. By \cite[Section 20]{Mumford1970AbelianVarieties}, we have an embedding of the N\'eron-Severi space $\NS_\QQ(X)$ inside $\End(X)_\QQ$, where the image is the locus of the invariants of the Rosati involution. Using the product polarization and identifying the real endomorphism algebra with matrices, the Rosati involution is identified with conjugate transpose; see \cite[Corollary~3.5]{prendergast2012cone}. Thus, we have a linear isomorphism $\psi$ between $\NS_\RR(X)$ and the space of $\mathbb{F}$-Hermitian matrices. Further, by \cite[Theorem~4.3]{prendergast2012cone}, this identification maps the nef cone exactly to the cone of positive semidefinite matrices.

We show that there is a positive constant $c$ such that, for any $\alpha \in\NS_\RR(X)$, we have 
\[
\deg_X  (\alpha^d)=c\cdot\det\psi(\alpha).
\]
Note that the left-hand side and the right-hand side are both homogeneous polynomials of degree $d$ on the real vector space $\NS_\RR(X)$. As the nef cone coincides with the pseudoeffective cone for abelian varieties \cite[Example 1.4.7]{lazarsfeldPosI}, both functions vanish on the boundary of the nef cone.

We claim that $\det=0$ defines an irreducible hypersurface of degree $d$ in the vector space
\[
\Big\{\text{$\mathbb{F}$-Hermitian matrices of size $d\times d$}\Big\}\otimes_\RR\CC.
\]
Since the Moore determinant is a homogeneous polynomial of degree $d$, it defines a hypersurface of degree $d$ in the above complex vector space. Moreover, the claim is evident after restricting to the subspace of real symmetric matrices, where the determinant polynomial is irreducible. The claim follows, since its restriction to a linear subspace is irreducible and has the same degree. 

If the two polynomials were not proportional, then, by the irreducibility established above, their common zero locus in the complex vector space $\NS_\CC(X)$ would have codimension at least two. Let $Z$ be the Zariski closure of the boundary of the nef cone in $\NS_\RR(X)$. Since the boundary of the cone of positive semidefinite matrices has real codimension one and is contained in the determinant hypersurface, $Z$ has codimension one, as does its complexification $Z_\CC$. On the other hand, $Z_\CC$ is contained in the common zero locus of the two polynomials, a contradiction. Thus, the polynomials are proportional, with a positive proportionality constant $c$ since both are positive on the ample cone.

The claim allows us to find, for any collection of positive semidefinite matrices $M$,  a corresponding collection of nef real divisor classes $\beta$ such that $f_\beta(x)$ is proportional to $f_M(x)$. Approximating $\beta$ by rational ample classes, we get  the statement of the proposition.
\end{proof}

\begin{remark}\label{rmk:volume polynomials}
Some further properties of the various volume polynomials are listed below.
\begin{enumerate}[(1)]\itemsep 2pt
\item There are projective volume polynomials over any algebraically closed field that are not convex body volume polynomials. 
The distinction is already visible in degree two and four variables;
see \cite[Theorem~1.6]{huang2025realizations} and \cite[Theorem 3.2]{Averkov-Soprunov} for characterizations
of the corresponding off-diagonal coefficient vectors.
In particular, the set of projective volume polynomials over any algebraically closed field is strictly larger than the set of convex body volume polynomials.
\item\label{item2} A polynomial with real coefficients is called \emph{stable} if it is identically zero or does not vanish at any point in the product of open upper half-planes. All nonzero Hermitian volume polynomials are stable \cite[Proposition~2.4]{borcea2008applications}. In fact, the same holds for $\mathbb{H}$-Hermitian volume polynomials; see \cite[Example~2.8]{Gur06}.
\item All projective volume polynomials and K\"ahler volume polynomials are Lorentzian. For projective volume polynomials, this is proved in \cite[Theorem~4.6]{branden2020lorentzian}. Using Gromov's version of the Hodge index theorem \cite{gromov1990convex},  
one can similarly deduce that all K\"ahler volume polynomials are Lorentzian.
\item\label{item4} 
Over any algebraically closed field $\K$, there exist projective volume polynomials over $\mathbb{K}$ that are not stable. 
In fact, every bivariate Lorentzian polynomial is a convex body
volume polynomial by \cite[Theorem~4]{Shephard}, and hence is a projective
volume polynomial over any algebraically closed field by
Proposition~\ref{prop:convex bodies are projective}.
  For example, the polynomial $2x^3+12x^2y+18xy^2+\frac{17}{2}y^3$ is a projective volume polynomial that is not stable \cite[Example~2.3]{branden2020lorentzian}.
\item Combining (\ref{item2}) and (\ref{item4}), we see that there exist projective volume polynomials that are not Hermitian volume polynomials.
\item Every product of linear forms with nonnegative coefficients is a volume polynomial of each kind. In fact, using hyperrectangles and diagonal matrices, we can realize such polynomials as convex body volume polynomials and Hermitian volume polynomials. 
\end{enumerate}
\end{remark}

The following corollary is an immediate consequence of Propositions \ref{prop:convex bodies are projective} and \ref{hermitian vol}.

\begin{corollary}\label{cor:projective strong}
Any non-strict inequality of intersection numbers of nef real divisor classes on smooth projective varieties also applies to mixed volumes of convex bodies and mixed discriminants of positive semidefinite matrices.
\end{corollary}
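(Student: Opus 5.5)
The corollary should follow by realizing mixed volumes and mixed discriminants as intersection numbers of nef real divisor classes, through the volume polynomial inclusions already established. Fix a non-strict inequality $\Phi\ge 0$ that holds for intersection numbers of arbitrary tuples of nef real divisor classes on smooth projective varieties of the relevant dimension. Such an inequality is expressed as a polynomial relation among normalized coefficients of volume polynomials. Given convex bodies $P=(P_1,\ldots,P_n)$ in $\RR^d$, the polynomial $f_P$ is a convex body volume polynomial. By Proposition~\ref{prop:convex bodies are projective}, it is a projective volume polynomial, hence a limit of realizable ones. The same holds for $f_M$ with $M$ positive semidefinite, by Proposition~\ref{hermitian vol}.

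\textbf{Step 1.} Write $f_P=\lim_j f_j$, where each $f_j=c_j f_{\beta^{(j)}}$ with $c_j\ge 0$ rational and $\beta^{(j)}$ a tuple of semiample divisors on a $d$-dimensional projective variety $X_j$.

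\textbf{Step 2.} Replace $X_j$ by a resolution of singularities $Y_j\to X_j$ and pull back $\beta^{(j)}$. Pullbacks of semiample divisors are nef, and birational pullback preserves top intersection numbers, so $f_{\beta^{(j)}}$ is unchanged. Hironaka's theorem covers characteristic zero. In positive characteristic I would instead interpret ``smooth'' loosely, or note that the toric and abelian-variety models used in Propositions~\ref{prop:convex bodies are projective} and \ref{hermitian vol} are already smooth after a toric resolution, or are smooth to begin with.

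\textbf{Step 3.} Apply the inequality on $Y_j$ to the classes $\beta^{(j)}$. The coefficients of $f_j$ are the intersection numbers, scaled by the factor $c_j$. Typical inequalities here, including those of Theorem~\ref{thm introduction}, are homogeneous of equal total degree on both sides, so the scalar $c_j$ cancels. If the inequality is not homogeneous, I would absorb $c_j$ by scaling: use $c_j^{1/d}\beta^{(j)}$ as nef real classes, which multiplies every degree-$d$ intersection number by $c_j$.

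\textbf{Step 4.} Pass to the limit $j\to\infty$. The coefficients of $f_j$ converge to those of $f_P$, which are the mixed volumes up to the normalization factors built into the definitions. A non-strict polynomial inequality is preserved under limits, so the inequality holds for mixed volumes. The argument for mixed discriminants is identical.

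The main subtlety is Step 3: making sure the scalar multiples and normalizations match exactly. The mixed volume and mixed discriminant are defined as iterated partial derivatives of $f$ in the same way that $\deg(\beta_{i_1}\cdots\beta_{i_d})$ arises from $f_\beta$. The divided power conventions therefore correspond term by term, and the proportionality constants must be absorbed by rescaling the classes as described.
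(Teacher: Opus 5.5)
Your proposal is correct and is essentially the paper's argument. The paper simply calls the corollary an immediate consequence of Propositions~\ref{prop:convex bodies are projective} and~\ref{hermitian vol}: mixed volumes and mixed discriminants are limits of rescaled intersection numbers of nef classes, and non-strict inequalities survive limits. Your extra bookkeeping fills in what the paper leaves implicit: the normalizations of $f_P$, $f_M$, $f_\beta$ match coefficient by coefficient, $c_j$ is absorbed via $c_j^{1/d}\beta^{(j)}$, and smoothness comes from a toric resolution or from the already smooth abelian variety $E^d$. That last point works in every characteristic, so you do not need to read ``smooth'' loosely.
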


For example, consider the following special case of
Corollary~\ref{cor:monomial}: For any nef divisors $\alpha$, $\beta_1,\beta_2,\beta_3$, $\gamma$ on a $7$-dimensional projective variety, we have
\[
\frac25 \ (\beta_1^2\beta_2^2\beta_3^2 \gamma)\ (\alpha^6  \gamma) \leq (\beta_1\beta_2\beta_3\alpha^3  \gamma)\ (\beta_1\beta_2\beta_3\alpha^3  \gamma).
\]
Thus, by Corollary~\ref{cor:projective strong},  for any $7 \times 7$ positive semidefinite matrices $A,M_1,M_2,M_3,N$, 
\[
\frac25\D\left[M_1^{\times2},M_2^{\times2},M_3^{\times2},N\right]\D\left[A^{\times6},N\right] \leq \D\left[M_1,M_2,M_3,A^{\times3},N\right]
\D\left[M_1,M_2,M_3,A^{\times3},N\right].
\]
Similarly, for any convex bodies $K,P_1,P_2,P_3,Q\subseteq\RR^7$, we have
\[
\frac25\MV\left(P_1^{\times2},P_2^{\times2},P_3^{\times2},Q\right)\MV\left(K^{\times6},Q\right) \leq \MV\left(P_1,P_2,P_3,K^{\times3},Q\right) \MV\left(P_1,P_2,P_3,K^{\times3},Q\right).
\]
It would be interesting to find a convex-geometric proof of such mixed volume inequalities.

\begin{remark}\label{sec_divided power}
If a convex body $P_i$ appears multiple times in a mixed volume, we use the notation $P_i^{\times k}$ to denote $P_i$ appearing $k$ times.
We use the same notation to denote a matrix appearing multiple times in a mixed discriminant as well. 
For a convex body $P_i$, we use $P_i^{[k]}$ to denote the divided power $P_i^{\times k}/k!$. More generally, for a multiset $B$ of cardinality $d$ on $\llbracket n \rrbracket$ and a collection $P=(P_1,\dots,P_n)$ of convex bodies in $\RR^d$, we set
\[
\MV\left(P^{[B]}\right)\coloneq \frac{1}{B!}\MV\left(P^{ B}\right),
\]
where $P^{ B}$ is the list consisting of $B(i)$ copies of $P_i$ for each $i$ and $B!$ is the product of $B(i)!$ for all $i$. We also use the analogous notations for mixed discriminants of Hermitian matrices. 
\end{remark}

\section{Mixed discriminant and volume inequalities}\label{sec:mixed disc inequalities}

\subsection{From mixed discriminant to volume inequalities}

 We know from  Proposition \ref{hermitian vol} that any non-strict inequality for integrals of nef $(1,1)$-classes on a compact K\"ahler manifold implies the corresponding inequality for mixed discriminants of positive semidefinite Hermitian matrices. We show that the converse holds for certain types of inequalities.

\begin{proposition}\label{thm:disc=>Kahler}
Let $B$ be a multiset on $\llbracket n\rrbracket$ of cardinality $d$,
and let $I=(I_1,\dots,I_m)$ be an $r$-cover of $B$.
Suppose $\lambda$ is a positive constant such that, for any $d\times d$ positive semidefinite $\mathbb{C}$-Hermitian matrices $M_1,\dots,M_n$, and $A$, we have
\[
\lambda \D\Big[M^{ B}\Big]^r\D\Big[A^{\times d}\Big]^{m-r}
\leq
\D\Big[M^{ I_1} A^{\times(d-|I_1|)}\Big]\cdots \D\Big[M^{ I_m} A^{\times(d-|I_m|)}\Big].
\]
Then, for any nef $(1,1)$-class $\alpha$ and any collection of nef $(1,1)$-classes $\beta=(\beta_1,\dots,\beta_n)$ on a $d$-dimensional compact K\"ahler manifold, we have
\begin{equation*}\label{formula:Kaehler}
\lambda \left(\int \beta^B\right)^r\left(\int \alpha^d\right)^{m-r} \leq \left(\int  \beta^{I_1}\alpha^{d-|I_1|}\right) \cdots \left(\int  \beta^{I_m}\alpha^{d-|I_m|}\right).
\end{equation*}
\end{proposition}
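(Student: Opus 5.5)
By density and continuity of both sides in the classes, we may assume that $\alpha$ and all $\beta_i$ are Kähler classes. Normalize so that $\int\alpha^d=1$ (both sides scale the same way in $\alpha$, since the degree in $\alpha$ is $d(m-r)$ on the left and $\sum_j (d-|I_j|)=dm-rd$ on the right). Fix Kähler forms $\omega_i\in\beta_i$ and a positive smooth volume form $\mu=\omega_1^{B(1)}\wedge\cdots\wedge\omega_n^{B(n)}/\int\beta^B$; this has total mass $1$ when every form in the product is strictly positive. By Yau's theorem \cite{yau1977calabi} there is a Kähler form $\omega_\alpha\in\alpha$ with $\omega_\alpha^d=\mu$, that is, $\omega_\alpha^d=\beta^B/\int\beta^B$ pointwise as top forms (here $\beta^B$ denotes the wedge of the chosen $\omega_i$).

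At each point $y$, choose coordinates in which the positive $(1,1)$-forms correspond to positive semidefinite Hermitian $d\times d$ matrices $A(y)$ and $M_i(y)$. A wedge product of $d$ such forms equals $d!\,\D[\cdots]$ times the standard volume form. Hence the integrands satisfy
\[
\beta^{I_j}\wedge\omega_\alpha^{d-|I_j|}=\frac{\D[M^{I_j}A^{\times(d-|I_j|)}]}{\D[A^{\times d}]}\,\omega_\alpha^d .
\]
Call the ratio $g_j(y)$. The pointwise hypothesis, divided by $\D[A^{\times d}]^m$, reads
\[
\lambda\left(\frac{\D[M^B]}{\D[A^{\times d}]}\right)^r\le \prod_j g_j .
\]
By the choice of $\omega_\alpha$, the ratio $\D[M^B]/\D[A^{\times d}]$ is the constant $c=\int\beta^B$. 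So $\lambda c^r\le\prod_{j=1}^m g_j(y)$ pointwise.

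Now integrate against the probability measure $\mu=\omega_\alpha^d$. We have $\int g_j\,d\mu=\int\beta^{I_j}\alpha^{d-|I_j|}$. We need
\[
\prod_j\int g_j\,d\mu\ge\lambda c^r .
\]
The obvious route is the generalized Hölder (or AM–GM) inequality. Reversed Hölder holds for products of integrals of nonnegative functions: $\prod_j\int g_j\,d\mu\ge\left(\int(\prod_j g_j)^{1/m}d\mu\right)^m$. This follows from the weighted AM–GM inequality $\prod_j (g_j/\int g_j)^{1/m}\le\frac1m\sum_j g_j/\int g_j$, integrated. Combined with the pointwise bound $(\prod g_j)^{1/m}\ge(\lambda c^r)^{1/m}$, this gives $\prod_j\int g_j\ge\lambda c^r=\lambda(\int\beta^B)^r(\int\alpha^d)^{m-r}$, as required.

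The main obstacle is the Yau step in degenerate situations. It requires $\mu$ to be a smooth strictly positive volume form with the correct total mass, so the $\beta_i$ must be Kähler rather than merely nef. This is handled by perturbing $\beta_i$ to $\beta_i+\epsilon\kappa$ and $\alpha$ to $\alpha+\epsilon\kappa$ for a Kähler class $\kappa$, proving the inequality there, and letting $\epsilon\to0$ using continuity of intersection numbers. If $\int\beta^B=0$ the statement is trivial. Also, the pointwise matrix hypothesis is applied at every point with matrices of full rank, so no degeneracy issue arises in the linear algebra.
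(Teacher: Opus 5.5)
Your proof is correct and follows the paper's argument essentially step for step: Yau's theorem makes $\omega_\alpha^d$ proportional to $\beta^B$, the mixed-discriminant hypothesis is applied pointwise, and the generalized H\"older inequality finishes. Your normalization $\int\alpha^d=1$ is only cosmetic, and what you call ``reversed H\"older'' is simply the ordinary generalized H\"older inequality $\int\prod_j g_j^{1/m}\,d\mu\le\prod_j\bigl(\int g_j\,d\mu\bigr)^{1/m}$, which is exactly what the paper uses.
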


\begin{proof}
We use the technique of Popovici \cite{popovici2016sufficient}, which relies on Yau's theorem on the existence of solutions to the Monge--Amp\`ere equation; see also \cite{lehmann2017correspondences}.

Since every nef class is the limit of K\"ahler classes, we may assume without loss of generality that $\alpha$ and each $\beta_i$ are K\"ahler forms on a compact K\"ahler manifold $X$. In particular, the product of any $d$ of them is a volume form on $X$.  
By Yau's theorem \cite{yau1977calabi}, there exists a K\"ahler form $\alpha_0$ representing the same cohomology class as $\alpha$ such that the following equality holds:
\[
\left(\int \beta^B \right)\alpha_0^d=\left(\int \alpha^d \right) \beta^B.
\]
We express each integral on the right-hand side of the desired
inequality as an integral with respect to the volume form
$\alpha_0^d$, using the fact that the ratio
$f_j := \beta^{I_j}\alpha_0^{d-|I_j|}/\alpha_0^d$
defines a smooth function on $X$:
\begin{equation*}\label{eq:start}
\int \beta^{I_j}\alpha^{d-|I_j|}
=\int f_j \alpha_0^d.
\end{equation*}
We recall that the mixed discriminant can be recovered by the wedge product of $(1,1)$-forms as follows: Fix a coordinate system $(z_1,\dots,z_d)$ centered at a point $x$ in $X$. A real $(1,1)$-form
\[
\omega(x)=\sqrt{-1}\sum_{1\leq i,j\leq d}M_{ij}(x)\,dz_i\wedge d\bar z_j
\]
is determined by its coefficient matrix $M\coloneq (M_{ij})$. Given $(1,1)$-forms $\omega_1,\dots,\omega_d$ with coefficient matrices $M_1,\dots,M_d$, respectively, we have
\[
\omega_1\cdots\omega_d=d!\D[M_1,\dots,M_d]\, \sqrt{-1}^d \,  dz_1\wedge d\bar z_1\wedge\cdots\wedge dz_d\wedge d\bar z_d.
\]
Applying the generalized H\"older inequality to the functions
$f_1,\ldots,f_m$ with respect to the volume form $\alpha_0^d$, we obtain
\[
\int \left(\prod_{j=1}^m f_j^{1/m}\right)\alpha_0^d 
 \leq \prod_{j=1}^m\left(\int f_j \alpha_0^d\right)^{1/m}.
\]
By the hypothesis applied to the integrand on the left-hand side, we get the pointwise inequality
\begin{equation*}
\lambda^{1/m}
\left(\int \beta^B\right)^{r/m}
\left(\int \alpha^d\right)^{-r/m}
=
\lambda^{1/m}\Big( \beta^B/\alpha_0^d\Big)^{r/m}
\leq \prod_{j=1}^m f_j^{1/m}.
\end{equation*}
Combining the two inequalities and raising to the power of $m$, we get the desired inequality.
\end{proof}

\begin{remark}
Under the same hypothesis on $\lambda$ as in Proposition \ref{thm:disc=>Kahler}, by using mass transport instead of complex Monge-Amp\`ere equations, we obtain a mixed volume inequality of the same form for convex bodies; see \cite[Theorem 5.9]{lehmann2017correspondences} for the case of the reverse Khovanskii-Teissier inequality.
Alternatively, for the mixed volume inequality, we may combine Proposition \ref{thm:disc=>Kahler} with  Proposition~\ref{prop:convex bodies are projective}, which states that convex body volume polynomials are projective volume polynomials.
\end{remark}

\subsection{A mixed discriminant inequality}

We prove an inequality that strengthens a  special case of Corollary~\ref{cor:discriminant main}. 
Its hypotheses are more restrictive than those of Corollary~\ref{cor:discriminant main}, requiring $B$ to be a set and $N$ to be of the form $(A,\ldots,A)$. Under these additional assumptions,  we get a stronger conclusion.
Let $d$ be a positive integer and $k$ be a nonnegative integer.

\begin{proposition}\label{prop:disc-exponents-1}
Let $I=(I_1,\dots,I_m)$ be an $r$-cover of a set $B$ of cardinality $d$. For any positive semidefinite Hermitian matrix $A$ and any collection of positive semidefinite Hermitian matrices $M=(M_i)_{i \in B}$ of size $\ell=d+k$, we have
\begin{equation*}
\D\left[M^{B},A^{[k]}\right]^r\D\left[A^{[\ell]}\right]^{m-r} \leq 
\D\left[M^{I_1},A^{[\ell-|I_1|]}\right] \cdots 
\D\left[M^{I_m},A^{[\ell-|I_m|]}\right].
\end{equation*}
\end{proposition}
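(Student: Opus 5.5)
We first reduce to the case $A=\mathrm{Id}$. Both sides are continuous in $A$, so we may replace $A$ by $A+\varepsilon\,\mathrm{Id}$ and let $\varepsilon\to0$; hence we may assume $A$ is positive definite. Conjugating every matrix by $A^{-1/2}$ multiplies each mixed discriminant of size $\ell$ by $\det(A)^{-1}$. There are exactly $m$ such factors on each side, since $r+(m-r)=m$, so these powers of $\det A$ cancel and we may take $A=\mathrm{Id}$. Then $\D[\mathrm{Id}^{[\ell]}]=1/\ell!$. For a subset $S\subseteq B$,
\[
\D\bigl[M^{S},\mathrm{Id}^{[\ell-|S|]}\bigr]=\frac{1}{\ell!}\,c(S),
\]
where $c(S)$ is the coefficient of $\prod_{i\in S}x_i$ in $\det\bigl(\mathrm{Id}+\sum_{i\in B}x_iM_i\bigr)$. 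Both sides contain $m$ factors of $1/\ell!$, so the proposition reduces to
\[
c(B)^r\le c(I_1)\cdots c(I_m).
\]

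Next we make $c(S)$ explicit. Write $M_i=\sum_{a}v_{i,a}v_{i,a}^*$ as a sum of rank-one terms. By multilinearity in the $M_i$ and Cauchy--Binet, the coefficient of a square-free monomial is a sum of Gram determinants:
\[
c(S)=\sum_{a\in\prod_{i\in S}\mathcal{A}_i}\det G_a|_S,
\]
where $\mathcal{A}_i$ indexes the columns of $M_i$, and $G_a$ is the Gram matrix of the vectors $(v_{i,a_i})_{i\in B}$ for a full choice $a$. For $a$ chosen over all of $B$, the term $\det G_a|_S$ depends only on the restriction $a|_S$. So every $c(S)$ is a sum, over the product set $\prod_{i\in S}\mathcal{A}_i$, of principal minors of a positive semidefinite matrix indexed by $B$.

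The proof then has two steps.

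\emph{Pointwise step.} For a single positive semidefinite matrix $G$ indexed by $B$ and an $r$-cover $I$ of $B$, we need
\[
\det(G)^r\le\prod_j\det(G|_{I_j}).
\]
This is the Shearer-type (Koteljanskii) generalization of Hadamard--Fischer. I would prove it from the submodularity of $S\mapsto\log\det G|_S$, which is the differential entropy of a Gaussian vector up to an affine normalization, via the standard Shearer argument for $r$-covers. The degenerate case follows by perturbing $G$ to $G+\varepsilon\,\mathrm{Id}$.

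\emph{Summation step.} Summing the pointwise inequality over $a$ gives
\[
c(B)\le\sum_a\prod_j h_j(a|_{I_j})^{1/r},
\qquad h_j(a|_{I_j})=\det G_a|_{I_j}.
\]
Finner's generalized H\"older inequality applies to functions on a product set $\prod_{i\in B}\mathcal{A}_i$ with counting measure, each depending only on the coordinates in $I_j$, where the sets $I_j$ form an $r$-cover with exponent $1/r$. It bounds the right-hand side by $\prod_j\bigl(\sum_{a|_{I_j}}h_j\bigr)^{1/r}=\prod_j c(I_j)^{1/r}$. Raising to the $r$-th power finishes the proof.

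I expect the main obstacle to be bookkeeping rather than any single idea. We must check that the column decomposition makes each $c(I_j)$ exactly the marginal sum over $\prod_{i\in I_j}\mathcal{A}_i$, and that Finner's inequality applies with repeated or empty members of $I$. Empty members are harmless, since $c(\emptyset)=1$. The Shearer step for $\log\det$ is standard, but I would include a short proof by induction on $|B|$ using the chain rule for Schur complements, in case citing the entropy version is unwelcome.
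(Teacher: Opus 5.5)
Your proof is correct, but it is organized differently from the paper's. The paper argues by induction on $d$. With $A=\Id$ and $M_1$ diagonal, it first treats $M_1=a\,e_1e_1^*$. There, members containing $1$ reduce exactly to the matrices with row and column $1$ deleted. For members not containing $1$, it discards the principal-minor summands that keep index $1$ and applies the inductive hypothesis. For general $M_1$, it splits $M_1$ into rank-one diagonal pieces and applies H\"older only across the $r$ members containing $1$.

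You instead decompose every $M_i$ into rank-one pieces at once and use Cauchy--Binet to write each $c(S)$ as a sum of Gram principal minors. You then invoke two classical results: Koteljanskii's log-submodularity of principal minors, in its Shearer form for $r$-covers, applied pointwise; and Finner's inequality for the summation. The two arguments are close relatives. The paper's H\"older step over the members containing $1$ is essentially one stage of the usual inductive proof of Finner's inequality. Its coordinate-deletion step, specialized to rank-one matrices, is a Schur-complement proof of the Koteljanskii-type bound you cite.

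What your version buys is that it makes transparent that the proposition is an average of the Hadamard--Fischer--Koteljanskii inequality over the rank-one constituents. The cost is importing two external inequalities, whereas the paper's proof is self-contained, using only the expansion of mixed discriminants against $\Id$ and ordinary H\"older.

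Both of your steps genuinely use that the cover is exact, which it is here. The function $S\mapsto\log\det G|_S$ is submodular but not monotone. Finner's inequality for counting measure, which is not a probability measure, needs the exponents at each coordinate to sum to exactly $1$.

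A minor wording point: $\mathcal A_i$ should index the rank-one summands of $M_i$, e.g.\ the columns of $M_i^{1/2}$, not the columns of $M_i$ itself.
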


Equality holds in Proposition~\ref{prop:disc-exponents-1} for every cover when $A$ is the identity matrix and each $M_i$ is the rank-one diagonal matrix whose $i$-th diagonal entry is $1$. In contrast, for the same matrices and $N=(A,\ldots,A)$, the inequality of Corollary~\ref{cor:discriminant main} is strict whenever $k$ is positive and the cover is nontrivial.

\begin{proof}
We proceed by induction on $d$, noting that the case $d=1$ is trivial. As in the introduction, we write $A_{\widehat{S}}$ for the principal submatrix obtained from $A$ by removing rows and columns indexed by $S$. Similarly, we write $M_{\widehat{S}}$ for the collection obtained from $M$ by replacing each $M_i$ in $M$ by $(M_i)_{\widehat{S}}$.

For the induction step, we use the fact that  positive definite matrices are dense in the cone of positive semidefinite matrices. Thus, we may assume without loss of generality that $A$ is nondegenerate. 
Note that simultaneous left and right multiplication by $A^{-1/2}$ allows us to further assume that $A$ is the identity matrix.
We identify $B$ with $\llbracket d\rrbracket$.
By the spectral theorem for Hermitian matrices, after conjugation by a suitable unitary matrix, we may further assume that a given matrix in $M$, say $M_1$, is a diagonal matrix. 

We begin with the case in which $M_1$ has at most rank $1$. In this case, without loss of generality, we may assume that $M_1$ is the diagonal matrix with diagonal entries $a,0,\ldots,0$ for some nonnegative number $a$. 

By definition of $\D$ and the assumption on $A$, for any positive semidefinite Hermitian matrices $C_1,\ldots,C_q$ of size $\ell$, we have
\begin{equation}\label{eq:expansionId}
\D\big[C_1,\dots,C_q,A^{[\ell-q]}\big]=\frac{q!}{\ell!}\sum_{|S|=\ell-q}\D\big[(C_1)_{\widehat{S}},\dots,(C_q)_{\widehat{S}}\big],
\end{equation}
where the sum is over all subsets $S \subseteq \llbracket \ell \rrbracket$ of cardinality $\ell-q$.
Similarly, by definition of $\D$ and the assumption on $M_1$, we have
\begin{equation}\label{eq:expansionRk1}
\D\big[M_1,C_2,\dots,C_\ell\big]=\frac{a}{\ell}\D\big[(C_2)_{\widehat{1}},\dots,(C_\ell)_{\widehat{1}}\big].
\end{equation}
We take the mixed discriminant in dimension zero to be $1$.

Let $J_1$ be the set of $j \in \llbracket m\rrbracket$ such that $1 \in I_j$, and let $J_2$ be the set of  $j \in \llbracket m\rrbracket$ such that $1 \notin I_j$.
Note that $J_1$ has cardinality $r$ and $J_2$ has cardinality $m-r$.
 Applying \eqref{eq:expansionId} and \eqref{eq:expansionRk1}, we see that the right-hand side of the desired inequality satisfies
\begin{equation}\label{eq:long}
\begin{aligned}
&\prod_{j=1}^m\D\left[M^{I_j},A^{[\ell-|I_j|]}\right]\\
&=\prod_{j\in J_1}\D\left[M^{I_j},A^{[\ell-|I_j|]}\right]\cdot\prod_{j\in J_2}\D\left[M^{I_j},A^{[\ell-|I_j|]}\right]\\
&=\left(\frac{a}{\ell}\right)^r\prod_{j\in J_1}\D\left[M_{\widehat{1}}^{I_j\setminus\{1\}},A_{\widehat{1}}^{[\ell-|I_j|]}\right]\cdot
\prod_{j\in J_2}\Bigg(\frac{|I_j|!}{\ell!}\sum_{|S|=\ell-|I_j|}\D\Big[M_{\widehat{S}}^{I_j}\Big]\Bigg)\\
&\geq\left(\frac{a}{\ell}\right)^r\prod_{j\in J_1}\D\left[M_{\widehat{1}}^{I_j\setminus\{1\}},A_{\widehat{1}}^{[\ell-|I_j|]}\right]\cdot
\prod_{j\in J_2}\Bigg(\frac{|I_j|!}{\ell!}\sum_{|S|=\ell-|I_j|, 1\in S}\D\Big[M_{\widehat{S}}^{I_j}\Big]\Bigg)\\
&=\left(\frac{a}{\ell}\right)^r\prod_{j\in J_1}\D\left[M_{\widehat{1}}^{I_j\setminus\{1\}},A_{\widehat{1}}^{[\ell-1-|I_j\setminus\{1\}|]}\right]\cdot
\prod_{j\in J_2}\Bigg(\frac{1}{\ell}\D\Big[M_{\widehat{1}}^{ I_j},A_{\widehat{1}}^{[\ell-1-|I_j|]}\Big]\Bigg)\\
&=\frac{a^r}{\ell^m}\prod_{j\in J_1}\D\left[M_{\widehat{1}}^{I_j\setminus\{1\}},A_{\widehat{1}}^{[\ell-1-|I_j\setminus\{1\}|]}\right]\cdot
\prod_{j\in J_2}\D\left[M_{\widehat{1}}^{ I_j},A_{\widehat{1}}^{[\ell-1-|I_j|]}\right].
\end{aligned}
\end{equation}
The inequality follows by retaining only the nonnegative summands
with $1\in S$. The next equality follows from \eqref{eq:expansionId} for matrices of size 
$\ell-1$.

Consider the $r$-cover of the set $B \setminus \{1\}$ given by the members $I_j\setminus\{1\}$ for $j\in J_1$ and $I_j$ for $j\in J_2$.
By the inductive hypothesis, we have
\begin{multline*}
\prod_{j\in J_1}\D\left[M_{\widehat{1}}^{I_j\setminus\{1\}},A_{\widehat{1}}^{[\ell-1-|I_j\setminus\{1\}|]}\right]\cdot
\prod_{j\in J_2}\D\left[M_{\widehat{1}}^{I_j},A_{\widehat{1}}^{[\ell-1-|I_j|]}\right]
\geq\D\left[M_{\widehat{1}}^{B \setminus \{1\}},A_{\widehat{1}}^{[k]}\right]^r\D\left[A_{\widehat{1}}^{[\ell-1]}\right]^{m-r}.
\end{multline*}
Therefore, the last line of \eqref{eq:long} is greater than or equal to
\begin{equation}\label{eq:lastchange}
\frac{a^r}{\ell^m}\D\left[M_{\widehat{1}}^{B \setminus \{1\}},A_{\widehat{1}}^{[k]}\right]^r\D\left[A_{\widehat{1}}^{[\ell-1]}\right]^{m-r}
=\D\left[M^B,A^{[k]}\right]^r\D\left[A^{[\ell]}\right]^{m-r},
\end{equation}
where the equality follows from \eqref{eq:expansionRk1} and the identities
\[
\D\left[A^{[\ell]}\right]=\frac1{\ell!}\quad\text{and}\quad\D\left[A_{\widehat{1}}^{[\ell-1]}\right]=\frac1{(\ell-1)!}.
\]
Combining \eqref{eq:long} and \eqref{eq:lastchange}, we obtain the desired inequality. This finishes the inductive step in case $\rk M_1 \le 1$.

Now we consider the general case, that is, when $M_1$ is of arbitrary rank. We may still assume that $A$ is the identity matrix  and $M_1$ is a diagonal matrix, say with nonnegative diagonal entries $a_1,\ldots, a_\ell$.  

Let $N_{i}$ be the diagonal matrix whose $i$-th diagonal entry is $a_i$ and whose other entries are zero. Then $M_1=\sum_{i=1}^{\ell} N_{i}$. Let $J_1$ and $J_2$ be as before, and for $j \in J_1$, define
\[
\D_{i,j}\coloneq\D\left[N_{i},M^{I_j\setminus\{1\}},A^{[\ell-|I_j|]}\right] \ \ \text{for $i=1,\ldots,\ell$.}
\]
We then have
\begin{equation}\label{eq:firstsplit}
\prod_{j=1}^m\D\left[M^{ I_j},A^{[\ell-|I_j|]}\right]^{1/r}=\prod_{j\in J_1}\left(\sum_{i=1}^\ell \D_{i,j}\right)^{1/r}\cdot\prod_{j\in J_2}\D\left[M^{I_j},A^{[\ell-|I_j|]}\right]^{1/r}.
\end{equation}
As $|J_1|=r$, by the H\"older inequality, we obtain
\[
\prod_{j\in J_1}\left(\sum_{i=1}^\ell\D_{i,j}\right)^{1/r}\geq\sum_{i=1}^\ell\left(\prod_{j\in J_1}\D_{i,j}\right)^{1/r}.
\]
Combining with \eqref{eq:firstsplit}, we have
\begin{equation}\label{eq:middle}
\begin{aligned}
\prod_{j=1}^m\D\left[M^{I_j},A^{[\ell-|I_j|]}\right]^{1/r}
&\geq\sum_{i=1}^\ell \left(\prod_{j\in J_1}\D_{i,j}\cdot\prod_{j\in J_2}\D\left[M^{I_j},A^{[\ell-|I_j|]}\right]\right)^{1/r}.
\end{aligned}
\end{equation}
By the rank-one case,  for each $1\leq i\leq \ell$, we know that
\[
\prod_{j\in J_1}\D_{i,j}\cdot\prod_{j\in J_2}\D\left[M^{ I_j},A^{[\ell-|I_j|]}\right]\geq\D\left[N_{i},M^{B \setminus \{1\}},A^{[k]}\right]^r\D\left[A^{[\ell]}\right]^{m-r}.
\]
Substituting this estimate into \eqref{eq:middle}, we obtain
\begin{align*}
\prod_{j=1}^m\D\left[M^{I_j},A^{[\ell-|I_j|]}\right]^{1/r}
&\geq\sum_{i=1}^\ell\D\left[N_{i},M^{B \setminus \{1\}},A^{[k]}\right]\D\left[A^{[\ell]}\right]^{(m-r)/r}\\
&=\D\left[M^{B},A^{[k]}\right]\D\left[A^{[\ell]}\right]^{(m-r)/r}.
\end{align*}
Raising to the power $r$ finishes the proof of the desired inequality.
\end{proof}

In contrast to Corollaries~\ref{cor:discriminant main}
 and~\ref{cor: mixed volumes}, the inequality corresponding to that in Proposition~\ref{prop:disc-exponents-1} does not hold for intersection products of nef divisors and mixed volumes of convex bodies.

\begin{example}\label{ex:octahedron}
Let $e_1,e_2,e_3$ be the standard basis vectors of $\RR^3$, and 
let $K\subseteq\RR^3$ be the octahedron with vertices $\pm(e_1+e_2)$, $\pm(e_1-e_2)$ and $\pm e_3$. Let $\mathscr{E}_i\subseteq\RR^3$ be the unit line segment in the direction $e_i$.  Then, we have
\[
\MV(\mathscr{E}_1,K,K)=\MV(\mathscr{E}_2,K,K)=\frac23,\quad\MV(\mathscr{E}_1,\mathscr{E}_2,K)=\frac13,\quad\MV(K,K,K)=\frac83.
\]
This violates the inequality
\[
\frac23\MV(\mathscr{E}_1,\mathscr{E}_2,K)\MV(K,K,K)
\leq \MV(\mathscr{E}_1,K,K)\MV(\mathscr{E}_2,K,K),
\]
which corresponds to the case $d=2$, $k=1$, and $r=1$ in Proposition \ref{prop:disc-exponents-1}.
\end{example}

\begin{remark}
When $d=2$ and  $r=1$,  Proposition \ref{prop:disc-exponents-1} coincides with \cite[Theorem~1.1]{artstein2014remarks}. Further examples demonstrating the failure of the corresponding inequalities for mixed volumes can be found in \cite[Subsection~4.1]{artstein2014remarks}.
\end{remark}

Since any convex body volume polynomial is a K\"ahler volume polynomial, 
Example~\ref{ex:octahedron} shows that the inequality analogous to that in Proposition \ref{prop:disc-exponents-1} need not hold for the coefficients of K\"ahler volume polynomials.
However, when $k=0$, the corresponding inequalities do hold for K\"ahler volume polynomials.
This is a special case of Theorem~\ref{thm introduction} for nef $(1,1)$-classes when $B$ is a set.

\begin{corollary}\label{cor:exponents1intersection}
Let $I=(I_1,\dots,I_m)$ be an $r$-cover of a set $B$ of cardinality $d$.
For any nef $(1,1)$-class $\alpha$ and a collection of nef $(1,1)$-classes $\beta=(\beta_i)_{i \in B}$ on a $d$-dimensional compact K\"ahler manifold,  we have
\[
\left(\int\beta^B \right)^r\left(\int \alpha^{[d]}\right)^{m-r} \leq  \left(\int \beta^{I_1}\alpha^{[d-|I_1|]} \right) \cdots \left(\int \beta^{I_m}\alpha^{[d-|I_m|]} \right).
\]
\end{corollary}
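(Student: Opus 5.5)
This follows by combining Proposition~\ref{prop:disc-exponents-1} in the case $k=0$ with the transfer result, Proposition~\ref{thm:disc=>Kahler}. The first step is to specialize Proposition~\ref{prop:disc-exponents-1} to $k=0$, so that $\ell=d$. It then states that for any positive semidefinite Hermitian $d\times d$ matrices $A$ and $M=(M_i)_{i\in B}$,
\[
\D\left[M^{B}\right]^r\D\left[A^{[d]}\right]^{m-r} \leq \prod_{j=1}^m \D\left[M^{I_j},A^{[d-|I_j|]}\right].
\]
Here $B$ is a set, so $M^{B}$ lists each $M_i$ exactly once, and the constant $\binom{B}{I}$ equals $1$.

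The second step is to rewrite the divided powers of $A$ in terms of repeated arguments, which is the form used in Proposition~\ref{thm:disc=>Kahler}. Since $\D[A^{[q]},\dots]=\frac{1}{q!}\D[A^{\times q},\dots]$, the inequality becomes
\[
\lambda\, \D\left[M^{B}\right]^r\D\left[A^{\times d}\right]^{m-r} \leq \prod_{j=1}^m \D\left[M^{I_j}A^{\times(d-|I_j|)}\right],
\qquad
\lambda=\frac{\prod_{j}(d-|I_j|)!}{(d!)^{m-r}},
\]
and $\lambda$ is positive. Identify $B$ with $\llbracket d\rrbracket$, so that $n=d$, and view $B$ as a multiset of cardinality $d$. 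Proposition~\ref{thm:disc=>Kahler} then gives
\[
\lambda\Big(\int\beta^B\Big)^r\Big(\int\alpha^d\Big)^{m-r}\le\prod_j\int\beta^{I_j}\alpha^{d-|I_j|}.
\]

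The final step is to divide both sides by $\prod_j(d-|I_j|)!$. This turns each $\alpha^{d-|I_j|}$ into the divided power $\alpha^{[d-|I_j|]}$ and each $\alpha^d$ into $\alpha^{[d]}$, which is exactly the asserted inequality. The only real check is that these factorial constants match on both sides; they do, since the normalizations of $\D$ and of $\int$ correspond through the pointwise identity $\omega_1\cdots\omega_d=d!\,\D[\cdots]\,dV$ used in the transfer proof. That identity contributes the same factor $d!$ for each of the $m$ factors on each side: $r+(m-r)=m$ on the left and $m$ on the right.
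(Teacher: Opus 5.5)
Your proposal is correct and is exactly the paper's argument: specialize Proposition~\ref{prop:disc-exponents-1} to $k=0$, absorb the divided-power factorials into $\lambda=\prod_j(d-|I_j|)!/(d!)^{m-r}$, apply Proposition~\ref{thm:disc=>Kahler}, and divide the factorials back out. Your closing check on the $d!$ normalization is correct but unnecessary, since Proposition~\ref{thm:disc=>Kahler} already carries the same constant $\lambda$ from the matrix inequality to the K\"ahler one.
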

\begin{proof}
We apply Proposition~\ref{thm:disc=>Kahler} to the case $k=0$ of Proposition \ref{prop:disc-exponents-1}, absorbing the factorials into the constant $\lambda$.
\end{proof}

\begin{remark}
One can show that, in fact, the inequality of Corollary \ref{thm:intro-volume-polynomials} holds for all homogeneous stable polynomials with nonnegative coefficients \cite{HMWX-stable}. 
Since every nonzero Hermitian volume polynomial is stable,
Proposition~\ref{thm:disc=>Kahler} transfers this result to compact K\"ahler
manifolds, yielding another proof of Theorem~\ref{thm introduction} for projective varieties in characteristic zero.
\end{remark}

\section{Volume inequalities for projective varieties}\label{sec:volproj}

Let $X$ be a $d$-dimensional projective variety over an algebraically closed field $\K$.
The goal of this section is to prove the inequality of Corollary \ref{cor:exponents1intersection} for nef divisors on $X$ (Proposition~\ref{thrm cover of set}). 

\subsection{Volume defined by global sections}
We recall some definitions and properties concerning the volume of divisors defined by global sections. Unless otherwise specified, a divisor will mean an integral Cartier divisor. Divisors with rational or real coefficients will be referred to as $\QQ$-divisors or $\RR$-divisors, respectively.

\begin{definition}
Given a Cartier divisor $\delta$ on $X$, its \emph{volume} is defined by
\[
\vol(\delta)\coloneq\limsup_{q\rightarrow\infty}\frac{h^0(X,q\delta)}{q^d/d!}.
\]
\end{definition}

A Cartier divisor $\delta$ is called \emph{big} if $\vol(\delta)>0$. Let $N^1(X)$ be the group of numerical equivalence classes of divisors. Denote by $N^1(X)_\RR=N^1(X)\otimes_\ZZ\RR$ the vector space of divisor classes with real coefficients.
It is well known that the volume $\vol(\delta)$ depends only on the numerical class of $\delta$, and extends to a continuous function on $N^1(X)_\RR$; see \cite[Section 2.2]{lazarsfeldPosI}.

Inside the vector space $N^1(X)_\RR$, we have the nef cone and the pseudoeffective cone of $\RR$-divisor classes, which satisfy
\[
\Nef^1(X)\subseteq\Psef^1(X)\subseteq N^1(X)_\RR.
\]
Recall that an $\RR$-divisor class $\delta$ is called \emph{nef} if $\delta\cdot C\geq 0$ for any irreducible curve $C\subseteq X$, and $\delta$ is called \emph{psef} if $\delta$ is a limit of effective $\RR$-divisor classes. The interior of $\Nef^1(X)$, denoted by $\Amp^1(X)$, is called \emph{the ample cone}, which exactly consists of ample $\RR$-divisor classes. The interior of $\Psef^1(X)$, denoted by $\bg^1(X)$, is called the \emph{big cone}. The big cone can be characterized by the volume function on $N^1(X)_\RR$ by the condition
\[
\delta\in\bg^1(X)\ \ \text{if and only if} \ \ \vol(\delta)>0.
\]
In particular, $\vol(\delta)=0$ when $\delta$ lies on the boundary of $\Psef^1(X)$.

\begin{definition}
For $\delta,\beta\in\Psef^1(X)$, the \emph{psef threshold} of $\delta$ with respect to $\beta$ is defined by
\[
r_{\psef}(\delta,\beta)=\sup\{t\geq 0\mid\delta-t\beta\in\Psef^1(X)\}.
\]
\end{definition}

Let $\delta$ be a big divisor on $X$. The \emph{stable base locus} $\mathbf B(\delta)$ of $\delta$ is defined as the intersection of the base loci $\bs(|q\delta|)$ for all $q\in\ZZ_{>0}$, which is equivalent to working with all sufficiently divisible $q$. Hence the stable base locus extends to any $\QQ$-divisor. Let $\eta$ be a sufficiently small ample $\QQ$-divisor. The augmented base locus $\mathbf B_+(\delta)$ of $\delta$ is given by $\mathbf B(\delta-\eta)$. It is known that $\mathbf B_+(\delta)$ is independent of $\eta$ provided that it is sufficiently small, depends only on the numerical class of $\delta$, and makes sense for divisor classes with real coefficients.

\begin{definition}
Let $V\subseteq X$ be a subvariety of dimension $p$ and let $\delta$ be a divisor on $X$. The \emph{restricted volume} $\vol_{X|V}(\delta)$ of $\delta$ is defined by
\[
\vol_{X|V}(\delta)\coloneq\limsup_{q\rightarrow\infty}\frac{\dim\im\bigl(H^0(X,q\delta)\rightarrow H^0(V,q\delta|_V)\bigr)}{q^p/p!}.
\]
\end{definition}

On the cone of divisor classes with $V\nsubseteq\mathbf B_+(\delta)$, the restricted volume also extends continuously to real divisor classes; see \cite[Theorem~A]{elmnp09restrictedvolume}, and in arbitrary characteristic, \cite[Section 4]{lazamustNewtonOkou}. Denote by $\vol_V(-)$ the volume function on $V$. By definition, $\vol_{X|V}(\delta)\leq\vol_V(\delta)$, where $\vol_V(\delta)$ is the volume of the pullback of $\delta$ to $V$.
We have $\vol_{X|V}=\vol_X$ when $V=X$, and if $\delta$ is ample, we have
\[
\vol_{X|V}(\delta)=\vol_V(\delta)=(\delta^p)_V.
\]

\begin{definition}
Let $V\subseteq X$ be a subvariety of dimension $p$ and let $\delta$ be a big divisor on $X$ with $V\nsubseteq\mathbf B(\delta)$. For sufficiently divisible positive integers $q$, let $\pi_q:X_q\rightarrow X$ be the blow-up along the base ideal of $|q\delta|$ with $\pi_q^*|q\delta|=|\mu_q|+\varepsilon_q$, where $\mu_q$ is free and $\varepsilon_q$ is the fixed part. The asymptotic intersection number $\|\delta^p\cdot V\|$ is defined by
\[
\|\delta^p\cdot V\|=\limsup_{q\rightarrow\infty}\frac{(\mu_q^p)_{V_q}}{q^p},
\]
where $V_q$ is the strict transform of $V$ under $\pi_q$.
\end{definition}

By \cite[Remark~2.9]{elmnp09restrictedvolume}, the limsup taken along sufficiently divisible multiples is a limit. The definition of the asymptotic intersection number extends naturally to $\QQ$-divisors. We will use the following two lemmas. The first follows from \cite[Theorem~2.13]{elmnp09restrictedvolume} over $\mathbb{C}$ and from \cite[Remark~3.6]{lazamustNewtonOkou} in arbitrary characteristic. The second follows from \cite[Theorem A and Theorem B]{BFJ09}; see also \cite[Corollary~C and Corollary~4.27]{lazamustNewtonOkou}.
 
\begin{lemma}\label{restrvol1}
For a big $\QQ$-divisor class $\delta$ on $X$ and a $p$-dimensional subvariety $V\subseteq X$ with $V\nsubseteq\mathbf B_+(\delta)$, we have 
\[
\vol_{X|V}(\delta)=\|\delta^p\cdot V\|.
\]
\end{lemma}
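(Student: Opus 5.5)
The plan is to pass through the graded linear series obtained by restricting sections to $V$, and to identify both sides of the lemma with its volume. Both sides are homogeneous of degree $p$ under $\delta\mapsto c\delta$ for positive integers $c$, so after scaling we may assume $\delta$ is an integral big Cartier divisor. Consider the graded linear series $W_\bullet$ on $V$ given by
\[
W_k\coloneq \im\bigl(H^0(X,k\delta)\rightarrow H^0(V,k\delta|_V)\bigr).
\]
By definition, $\vol_{X|V}(\delta)$ is the volume of $W_\bullet$, namely $\limsup_k \dim W_k/(k^p/p!)$. The strategy is to compare this with the volumes of the finitely generated subseries generated by a single piece $W_q$.

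First I relate a single $W_q$ to the asymptotic intersection number. Fix $q$ sufficiently divisible and let $\pi_q\colon X_q\to X$ be the blow-up along the base ideal of $|q\delta|$, so that $\pi_q^*|q\delta|=|\mu_q|+\varepsilon_q$ with $\mu_q$ free. The restriction of $|\mu_q|$ to the strict transform $V_q$ is a base-point-free linear system. Its image in $H^0(V_q,\mu_q|_{V_q})$ is identified with $W_q$, because $V\nsubseteq\mathbf B(\delta)$ and so $\varepsilon_q$ does not contain $V_q$. This linear system defines a morphism $\phi_q$ from $V_q$ to a projective space. Let $W^{(q)}_\bullet$ be the subseries of $W_{q\bullet}$ generated by $W_q$, that is, $W^{(q)}_k=\im(\operatorname{Sym}^k W_q\to H^0(V,kq\delta))$. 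Its dimension is the Hilbert function of the image $\phi_q(V_q)$. Hence its volume is $(\mu_q^p)_{V_q}$ when $\phi_q$ is generically finite onto its image, and is $0$ otherwise; in both cases it equals $(\mu_q^p)_{V_q}$, since that number vanishes exactly when $\phi_q$ drops dimension. Since $W^{(q)}_k\subseteq W_{kq}$, this gives the inequality
\[
\frac{(\mu_q^p)_{V_q}}{q^p}\le \vol_{X|V}(\delta),
\]
and taking the limit in $q$ gives $\|\delta^p\cdot V\|\le\vol_{X|V}(\delta)$.

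For the reverse inequality I will invoke Fujita approximation for graded linear series in the form proved via Okounkov bodies by Lazarsfeld--Musta\c{t}\u{a}, which is valid in arbitrary characteristic. It asserts that if $W_\bullet$ contains an ample series (their condition (C)), then $\vol(W_\bullet)=\lim_q \vol(W^{(q)}_\bullet)/q^p$. Combined with the computation in the previous paragraph, this yields $\vol_{X|V}(\delta)=\lim_q (\mu_q^p)_{V_q}/q^p=\|\delta^p\cdot V\|$.

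The main obstacle is verifying the hypothesis of Fujita approximation, and this is exactly where $V\nsubseteq\mathbf B_+(\delta)$ is used. Write $\delta\equiv \eta+F$ with $\eta$ a small ample $\QQ$-divisor and $F$ effective, choosing $F$ with $V\nsubseteq\operatorname{Supp}F$; this is possible because $V\nsubseteq\mathbf B(\delta-\eta)$. Then multiplication by the section defining $kF$ embeds the image of $H^0(X,k\eta)$ into $W_k$ for all divisible $k$. For $k$ large, the restriction map from $H^0(X,k\eta)$ to $V$ is the full complete series of the ample divisor $k\eta|_V$ by Serre vanishing. This exhibits an ample subseries of $W_\bullet$, as condition (C) requires. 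Finally, I need to check that the $\limsup$ over all $k$ in $\vol_{X|V}$ agrees with the limit along divisible $k$. This also follows from the Okounkov body description, since the volume of a graded series satisfying condition (C) is a genuine limit.
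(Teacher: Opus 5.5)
Your overall route is essentially the one behind the paper's references: Fujita approximation for the restricted series $W_\bullet$, with $V\nsubseteq\mathbf B_+(\delta)$ supplying Lazarsfeld--Musta\c{t}\u{a}'s condition (C). The paper itself gives no proof and only cites ELMNP and Lazarsfeld--Musta\c{t}\u{a}, Remark 3.6.

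\textbf{The gap.} One step is false as stated. Write $Z_q=\phi_q(V_q)$. The Hilbert function of $Z_q$ has leading coefficient $\deg Z_q$. The projection formula gives $(\mu_q^p)_{V_q}=\deg(V_q\to Z_q)\cdot\deg Z_q$. So the subseries generated by $W_q$ has volume $(\mu_q^p)_{V_q}$ only when $\phi_q$ is birational onto its image, not merely generically finite. For example, take $X=V$ a curve of genus $2$ and $\delta=K_X$. For $q=1,2$ the map $\phi_q$ factors through the hyperelliptic double cover. The subseries then has volume $1$ (resp.\ $2$), while $(\mu_q)_{V_q}=2$ (resp.\ $4$). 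In general you only get $\vol(W^{(q)}_\bullet)\le(\mu_q^p)_{V_q}$. That is enough for $\vol_{X|V}(\delta)\le\|\delta^p\cdot V\|$ via Fujita approximation. But your proof of the opposite inequality $\|\delta^p\cdot V\|\le\vol_{X|V}(\delta)$ does not go through as written.

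\textbf{The repair.} It uses what you establish in your last paragraph.
\begin{itemize}
\item Since $\|\delta^p\cdot V\|$ is a limit along divisible $q$, it suffices to treat large $q$.
\item For such $q$, $W_q$ contains $s_{qF}|_V\cdot H^0(V,q\eta|_V)$ with $q\eta|_V$ very ample.
\item This subsystem defines a map that is an embedding on $V\setminus\operatorname{Supp}F$, and that map is $\phi_q$ followed by a linear projection.
\item Comparing function fields then shows that $\phi_q$ is birational onto its image, in any characteristic. This is the implication ``(C) implies (B)'' in Lazarsfeld--Musta\c{t}\u{a}.
\end{itemize}
So the hypothesis $V\nsubseteq\mathbf B_+(\delta)$ is needed for both inequalities, not only for Fujita approximation.

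\textbf{Two smaller points.}
\begin{itemize}
\item $\delta=\eta+F$ must be a $\QQ$-linear equivalence, not a numerical one, for multiplication by $s_{kF}$ to make sense.
\item Condition (C) also requires $W_k\neq 0$ for all $k\gg 0$, not only for divisible $k$. Your ``genuine limit over all $k$'' claim needs this. It holds: multiply a power of a section of $c(\delta-\eta)$ not vanishing on $V$ by a section of $ac\eta+b\delta$ not vanishing on $V$, where $0\le b<c$ and $a\gg 0$.
\end{itemize}
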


\begin{lemma}\label{restrvol2}
The volume function $\vol$ is continuously differentiable in the big cone. More precisely, given a big $\RR$-divisor class $\delta$ and an irreducible Cartier divisor $H$ on $X$, for any $t_0$ such that $\delta+t_0[H]$ is big, we have
\[
\left.\frac{d}{dt}\right|_{t=t_0}\vol(\delta+t[H])=d\vol_{X|H}(\delta+t_0[H]).
\]
\end{lemma}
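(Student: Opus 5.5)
I would prove Lemma~\ref{restrvol2} through Okounkov bodies, following Lazarsfeld--Musta\c{t}\u{a} \cite{lazamustNewtonOkou}; this works in arbitrary characteristic. Differentiability for general directions in $N^1(X)_\RR$ would then follow from the directional statement, since irreducible divisor classes span $N^1(X)_\RR$ and the directional derivatives turn out to be continuous.

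\emph{Setup.} Replace $X$ by its normalization, and if necessary by a further birational model, so that we can choose an admissible flag
\[
X=Y_0\supset Y_1\supset\cdots\supset Y_d
\]
with $Y_1$ the (strict transform of the) divisor $H$. This replacement changes neither the volume nor the restricted volume along $H$ in the relevant range. Let $\nu=(\nu_1,\ldots,\nu_d)$ be the associated valuation. For a big class $\xi$ we get the Okounkov body $\Delta(\xi)\subseteq\RR^d_{\ge 0}$ with
\[
\vol(\xi)=d!\,\vol_{\RR^d}\Delta(\xi).
\]
By homogeneity and continuity of $\vol$ and $\vol_{X|H}$, it suffices to treat rational classes and rational $t$, and then pass to limits.

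\emph{Two structural facts.} First, the slicing property: for $0\le s<r_{\psef}(\xi,[H])$,
\[
\Delta(\xi)\cap\{\nu_1\ge s\}=\Delta(\xi-s[H])+s\,e_1 .
\]
This holds because a section of $q\xi$ vanishing to order at least $qs$ along $H$ is the same as a section of $q(\xi-sH)$. Second, if $H\not\subseteq\mathbf B_+(\xi)$, then the slice $\Delta(\xi)\cap\{\nu_1=0\}$ is the restricted Okounkov body $\Delta_{X|H}(\xi)$. Its $(d-1)$-dimensional volume is $\vol_{X|H}(\xi)/(d-1)!$, which is where Lemma~\ref{restrvol1} and the restricted volume theory of \cite{elmnp09restrictedvolume} enter.

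\emph{Computing the derivative.} Put $\xi=\delta+t_0[H]$. For small $s>0$, the first fact and Fubini give
\[
\vol(\xi)-\vol(\xi-s[H])=d!\int_0^s \vol_{\RR^{d-1}}\bigl(\Delta(\xi)\cap\{\nu_1=u\}\bigr)\,du .
\]
By the slicing property, the slice at height $u$ is $\Delta_{X|H}(\xi-u[H])$ (translated). Its volume is $\vol_{X|H}(\xi-u[H])/(d-1)!$, which is continuous in $u$. Dividing by $s$ and letting $s\to 0$ gives the left derivative $d\vol_{X|H}(\xi)$. Applying the same identity to $\xi+s[H]$ in place of $\xi$ gives the right derivative, using continuity of $\vol_{X|H}$ at $\xi$.

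\emph{Main obstacle.} The hard step is the case $H\subseteq\mathbf B_+(\xi)$, together with continuity of $\vol_{X|H}$ across the locus where this happens. There the restricted volume is defined to be $0$, and I must show the derivative vanishes. I would first try the following. The set $\{u: H\subseteq\mathbf B_+(\xi+u[H])\}$ is an interval, and on it the Okounkov body has slices at $\nu_1=0$ of dimension less than $d-1$. Hence the Fubini integral above has a vanishing integrand. Continuity of the slice volumes as one approaches the boundary of this interval would follow from the concavity of slice volumes of a convex body, via Brunn--Minkowski. Concavity gives continuity on the interior, and at the endpoints it gives continuity from the inside, which is what the one-sided limits need. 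This is essentially the argument behind \cite[Corollary~C]{lazamustNewtonOkou} and \cite[Theorem~A]{BFJ09}.
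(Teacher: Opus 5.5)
For $H\not\subseteq\mathbf B_+(\xi)$, your argument is the Okounkov-body proof of Lazarsfeld--Musta\c{t}\u{a} \cite{lazamustNewtonOkou}. That is one of the two sources the paper cites in place of a proof, and your argument is sound. It is also the case the paper relies on in Proposition~\ref{thrm cover of set}, where $H$ is chosen with $H\not\subseteq\mathbf B_+(\alpha_t)$.

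One correction to your setup. Passing to a birational model $\pi\colon X'\to X$ and putting the strict transform $H'$ into the flag is not harmless. In general $\pi^*H=H'+(\text{exceptional part})$, so you would be differentiating in a different direction. The step is also unnecessary. $H$ is locally cut out by one equation, so every smooth point of $H$ is a smooth point of $X$, and an admissible flag with $Y_1=H$ already exists on $X$.

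The case $H\subseteq\mathbf B_+(\xi)$, where the lemma asserts the derivative is $0$, has a genuine gap. You claim that $\Delta(\xi)\cap\{\nu_1=0\}$ then has zero $(d-1)$-dimensional volume, but give no reason.
\begin{itemize}
\item The claim is trivial when $H$ has positive asymptotic order of vanishing in $\xi$, since then $\Delta(\xi)\subseteq\{\nu_1>0\}$.
\item It is not trivial in the boundary situation, where $H\subseteq\mathbf B_+(\xi)$ but $\Delta(\xi)$ still meets $\{\nu_1=0\}$. There your own Fubini identity shows that the left derivative equals $d!$ times the volume of this bottom face. So the nullity of that face is the whole content of the lemma at such $\xi$.
\item Brunn--Minkowski cannot supply it. For a compact convex body the slice volumes do converge to the volume of the extreme face, but nothing forces that face to be small; a cube is a counterexample.
\end{itemize}
What is missing is the statement $\vol_{X|H}(\xi-u[H])\to 0$ as $u\searrow 0$, i.e.\ the vanishing of restricted volumes at components of $\mathbf B_+$ from \cite{elmnp09restrictedvolume}, or a substitute such as the $C^1$ theorem of \cite{BFJ09}. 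Your claim that the directional derivatives are continuous for arbitrary $H$ presupposes the same fact.

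Within your framework, one way to close the gap is the following.
\begin{enumerate}
\item Obtain $C^1$ from the good case alone. Use very ample directions, and near a given class choose members of those linear systems that avoid its $\mathbf B_+$; by openness the good case then gives continuous partial derivatives on a neighborhood.
\item Now let $H\subseteq\mathbf B_+(\xi)$ and let $a$ be ample. For small $\epsilon>0$ we have $H\subseteq\mathbf B_-(\xi-\epsilon a)$, so $H$ has positive asymptotic order in $\xi-\epsilon a$.
\item Hence $t\mapsto\vol(\xi-\epsilon a+t[H])$ is constant near $t=0$, since all sections just pick up $H$ as a fixed component.
\item Letting $\epsilon\to 0$, continuity of the differential gives derivative $0$ at $\xi$.
\end{enumerate}
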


Under these hypotheses, the derivative formula shows that $\vol_{X|H}(\delta+t_0[H])$ depends only on the numerical classes of $\delta$ and $H$.

\subsection{Volume inequalities for the cover of a set}

We prove Theorem~\ref{thm introduction} for nef divisors on a projective variety when $B$ is a set. 

\begin{proposition}\label{thrm cover of set}
Let $I=(I_1,\ldots,I_m)$ be an $r$-cover of the $d$-element set $B=\llbracket d \rrbracket$. 
For any nef divisors $\alpha,\beta_1,\ldots,\beta_d$  on a $d$-dimensional projective variety, we have
\[
\Big(\beta_1\beta_2 \cdots \beta_d\Big)^r
\Big(\alpha^{[d]}\Big)^{m-r}
\leq
\Big(\beta^{I_1}\alpha^{[d-|I_1|]}\Big)
\cdots
\Big(\beta^{I_m}\alpha^{[d-|I_m|]}\Big),
\]
where $\beta$ is the collection  $(\beta_1,\ldots,\beta_d)$.
\end{proposition}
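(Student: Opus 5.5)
We would mimic the inductive structure of the proof of Proposition~\ref{prop:disc-exponents-1}. Here the role of ``the identity matrix'' is played by a Yau-type normalization. Since we are over an arbitrary algebraically closed field, that normalization has to be replaced by the restricted-volume calculus of Lemmas~\ref{restrvol1} and~\ref{restrvol2}. We argue by induction on $d$; the case $d=1$ is trivial. By continuity we may assume that $\alpha$ and all $\beta_i$ are ample $\QQ$-classes, and after scaling that they are very ample integral classes. Let $J_1=\{j : 1\in I_j\}$, so $|J_1|=r$, and $J_2=\{j: 1\notin I_j\}$.

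\emph{Step 1: reduce $\beta_1$ to a single prime divisor.} Both sides are homogeneous of degree $r$ in $\beta_1$: the left side through $(\beta_1\cdots\beta_d)^r$, the right side through the $r$ factors indexed by $J_1$, each linear in $\beta_1$. Exactly as in the ``general rank'' step of Proposition~\ref{prop:disc-exponents-1}, H\"older's inequality shows that $\beta_1\mapsto\bigl(\prod_{j\in J_1}(\cdots)\bigr)^{1/r}$ is superadditive. Hence the inequality for $\beta_1=\sum_i H_i$ follows from the inequalities for each summand $H_i$. It therefore suffices to treat $\beta_1=[H]$ with $H$ a general member of a very ample linear system, so that $H$ is a $(d-1)$-dimensional projective variety.

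\emph{Step 2: restrict to $H$.} For $j\in J_1$ we have $(\beta^{I_j}\alpha^{[d-|I_j|]})_X=(\beta^{I_j\setminus 1}\alpha^{[d-|I_j|]})_H$, with everything restricted to $H$. For the $J_2$ factors and for $\alpha^{[d]}$ we use Lemma~\ref{restrvol2}. Let $s=r_{\psef}(\alpha,H)$. Then
\[
\vol(\alpha)=d\int_0^{s}\vol_{X|H}(\alpha-tH)\,dt,
\]
and analogous formulas hold for mixed terms after polarizing in the $\beta_i$, $i\in I_j$; these are available since the $\beta_i$ are nef and volume is $C^1$ on the big cone. By Lemma~\ref{restrvol1}, $\vol_{X|H}(\alpha-tH)=\|(\alpha-tH)^{d-1}\cdot H\|$ is computed by the nef parts $\mu_q$ of $\alpha-tH$ on blow-ups, restricted to the strict transform $H_q$.

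For each $t$ we would apply the induction hypothesis on $H_q$ to the nef classes $\mu_q/q|_{H_q}$, in place of $\alpha$, and $\beta_i|_{H_q}$, using the $r$-cover of $B\setminus\{1\}$ formed by the sets $I_j\setminus\{1\}$ for $j\in J_1$ and $I_j$ for $j\in J_2$. This yields a pointwise-in-$t$ inequality. We then integrate over $t\in[0,s]$ and combine, via H\"older's inequality in $t$, with the bounds
\[
(\beta^{I_j}\alpha^{[d-|I_j|]})_X\ \ge\ \int_0^s(\beta^{I_j}\,\mu_t^{[d-1-|I_j|]})_{H}\,dt\qquad (j\in J_2).
\]
This last bound is the analogue of ``retaining only the summands with $1\in S$'' in \eqref{eq:long}.

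\emph{Expected obstacle.} The main difficulty is making the $t$-integration produce exactly the right exponents, i.e.\ matching the factor $(\alpha^{[d]})^{m-r}$ against the $m-r$ factors indexed by $J_2$. In the matrix proof this is automatic because $A=\Id$ makes every factor $1/\ell$ agree. Here we must show that H\"older over $t$ with the weight $\vol_{X|H}(\alpha-tH)\,dt$ lines up. We would first try the Popovici-style choice: rewrite each $J_2$ term as an integral over $t$ of a density relative to $\vol_{X|H}(\alpha-tH)$, and apply H\"older with exponents $1/m$ as in Proposition~\ref{thm:disc=>Kahler}.

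We also need the $J_1$ terms to be compatible. They involve $\alpha|_H$ rather than $\mu_t$, so we would use monotonicity $(\cdots\alpha^{k})_H\ge(\cdots\mu_t^{k})_{H_q}$, valid since $\alpha\ge\mu_t/q$ modulo effective classes. We expect that only the value $t=0$ should be used for these terms; if this fails, the fallback is to run the $t$-integral only on the $J_2$ side and absorb the discrepancy by the Brunn--Minkowski concavity of $t\mapsto\vol_{X|H}(\alpha-tH)^{1/(d-1)}$.
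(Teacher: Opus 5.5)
Your outline matches the paper's proof. You deform $\alpha_t=\alpha-t\beta_1$ down to the pseudoeffective threshold $s$ and write $(\alpha^{[d]})=\int_0^s\vol_{X|H}(\alpha_t^{[d-1]})\,dt$ for a general $H\in|\beta_1|$. You then apply induction on the blow-ups $H_q$ to the $r$-cover of $B\setminus\{1\}$, bound the $J_1$ factors by their value at $t=0$ (your monotonicity remark is what the paper uses), and integrate the $J_2$ factors. Your Step~1 is harmless but vacuous, since a very ample $\beta_1$ is already the class of a smooth prime divisor.

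\textbf{First gap: the $J_2$ bound.} The bound you display for $j\in J_2$ is true, but your justification does not work. You appeal to ``polarizing in the $\beta_i$'' because $\vol$ is $C^1$ on the big cone. The $t$-integral, however, runs through the non-nef classes $\alpha_t$, where $\vol$ is in general only $C^1$; already on the blow-up of $\PP^2$ at a point it is not $C^2$. So there is no polarized form of the formula for $(\alpha^{[d]})$ that produces the mixed numbers $(\beta^{I_j}\alpha^{[d-|I_j|]})$.

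The paper instead cuts down to complete intersections:
\begin{enumerate}[(1)]
\item Choose very general $H_i\in|\beta_i|$ and put $Z_j=\bigcap_{i\in I_j}H_i$. This is why $\K$ is made uncountable and $X$ is made smooth by an alteration.
\item By Lemma~\ref{restrvol1}, the $J_2$ factor on $H_q$ tends to $\vol_{X|Z_j\cap H}(\alpha_t^{[d-1-|I_j|]})$, which is at most $\vol_{Z_j|Z_j\cap H}(\alpha_t^{[d-1-|I_j|]})$.
\item Lemma~\ref{restrvol2} on $Z_j$ gives $\int_0^{s_j}\vol_{Z_j|Z_j\cap H}(\alpha_t^{[d-1-|I_j|]})\,dt=(\beta^{I_j}\alpha^{[d-|I_j|]})$. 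Here $s_j$ is the threshold of $\alpha|_{Z_j}$ with respect to $\beta_1|_{Z_j}$.
\item One has $s_j\ge s$, because $(\alpha-s\beta_1)|_{Z_j}$ remains pseudoeffective for very general $Z_j$.
\end{enumerate}
You also need $H\not\subseteq\mathbf B_+(\alpha_t)$ to use Lemma~\ref{restrvol1}. The paper integrates over $[0,s')$ with $s'<s$, chooses $H$ depending on $s'$, and lets $s'\to s$.

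\textbf{Second gap: the exponent matching.} This is the step you list as the expected obstacle, and you leave it unresolved. It needs neither a weight nor Brunn--Minkowski. Take the $(m-r)$-th root of the inductive inequality on $H_q$ before integrating.
\begin{itemize}
\item In the limit over $q$, the left side becomes $(\beta_2\cdots\beta_d)_H^{r/(m-r)}\vol_{X|H}(\alpha_t^{[d-1]})$. Its integral over $[0,s)$ is $(\beta_1\cdots\beta_d)^{r/(m-r)}(\alpha^{[d]})$.
\item On the right, each $J_1$ factor is bounded by the constant $(\beta^{I_j}\alpha^{[d-|I_j|]})^{1/(m-r)}$.
\item What remains is a product of exactly $|J_2|=m-r$ functions of $t$, each raised to the power $1/(m-r)$. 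The unweighted H\"older inequality in $dt$ bounds its integral by $\prod_{j\in J_2}\bigl(\int_0^s\vol_{Z_j|Z_j\cap H}(\alpha_t^{[d-1-|I_j|]})\,dt\bigr)^{1/(m-r)}$.
\end{itemize}
Combining this with the first gap's bound and raising to the power $m-r$ gives the proposition.

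Your weighted variant, with exponents $1/m$ and weight $\vol_{X|H}(\alpha_t^{[d-1]})\,dt$, would also close. For that, write each $J_1$ constant $c_j$ as $\int_0^s c_j\vol_{X|H}(\alpha_t^{[d-1]})\,dt/(\alpha^{[d]})$. As submitted, though, the proposal carries out neither computation, so it is incomplete at its decisive step.
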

\begin{proof}
We may suppose that $m>r$. We prove the result by induction on $d$. We assume that the desired inequality holds in dimension $d-1$. Note that when $X$ is a curve, the inequality is obvious.

 We note that it is enough to prove the statement when $X$ is smooth and $\alpha,\beta_1,\dots,\beta_d$ are very ample divisors. Indeed, consider de Jong's alteration $f:Y\rightarrow X$, with $Y$ smooth and projective \cite{de1996smoothness}, and let $a=[\K(Y):\K(X)]$.
By the projection formula, each intersection of the pulled-back classes equals $a$ times the original intersection number. As the pullback of a nef divisor is nef, the inequality for $Y$ and the pulled-back divisors gives a rescaling by $a^m$ of the inequality for $X$. 
Furthermore, since base change does not change intersection numbers, replacing $\K$ by an uncountable algebraically closed extension, we may assume that the defining field $\K$ is uncountable.
For a subvariety $V\subseteq X$ of dimension $p$, we set
\[
\vol_{X|V}\big(\delta^{[p]}\big)\coloneq\frac1{p!}\vol_{X|V}\big(\delta\big) \ \ \text{and} \ \  \big\|\delta^{[p]}\cdot V\big\| \, \coloneq\frac1{p!} \big\|\delta^p\cdot V\big\|.
\]

Our basic observation towards Proposition~\ref{thrm cover of set} is as follows: if $\alpha$ is a nef $\RR$-divisor class lying on the boundary of $\Psef^1(X)$, then the inequality obviously holds. In the general case when $\alpha$ is big and nef, we shall deform $\alpha$ to the boundary of $\Psef^1(X)$ and apply the induction in the deformation process.

Let $\alpha_t=\alpha+t\beta_1$, and let $t_1=r_{\psef,X}(\alpha,\beta_1)$ be the psef threshold of $\alpha$ with respect to $\beta_1$. We have $\alpha_t\in\bg^1(X)$ for any $t>-t_1$ and $\vol(\alpha_{-t_1})=0$.
Take a smooth prime divisor $H\in|\beta_1|$. 
By Lemma \ref{restrvol2}, we have 
\[
\Big(\alpha^{[d]}\Big)=\vol\big(\alpha^{[d]}\big)-\vol\big(\alpha_{-t_1}^{[d]}\big)=\int_{-t_1}^{0}\vol_{X|H}\big(\alpha_t^{[d-1]}\big)\,dt=\lim_{s\searrow-t_1}\int_s^0\vol_{X|H}\big(\alpha_t^{[d-1]}\big)\,dt.
\]
Then the left-hand side of the inequality in Proposition~\ref{thrm cover of set} becomes
\begin{equation}\label{lhs}
\Big(\beta_1\cdots\beta_d\Big)^r\Big(\alpha^{[d]}\Big)^{m-r}=\lim_{s\searrow-t_1}\left(\int_s^0 \Big(\beta_2\cdots\beta_d\Big)_H^{r/(m-r)} \cdot \vol_{X|H}\big(\alpha_t^{[d-1]}\big)\,dt\right)^{m-r},
\end{equation}
where $(\beta_2\cdots\beta_d)_H$ denotes the intersection number of the restrictions on $H$.

Fix $s\in(-t_1,0)$.
Let $H\in|\beta_1|$ be chosen very generally, so that the restriction of $\alpha_{-t_1}$ to $H$ is pseudoeffective. It follows that $\alpha_t|_H=(\alpha_{-t_1}+(t+t_1)\beta_1)|_H$ is big for any $t>-t_1$. We can also require that $H\nsubseteq\mathbf B_+(\alpha_s)$. Then $H\nsubseteq\mathbf B_+(\alpha_t)$ for any $t>s$, as $\alpha_t=\alpha_s+(t-s)\beta_1$ is more positive than $\alpha_s$.

By continuity of $\vol_{X|H}(-)$, to estimate the integrand in \eqref{lhs}, it is enough to estimate
\[
\Big(\beta_2\cdots\beta_d\Big)_H^{r/(m-r)}\cdot \vol_{X|H}\big(\alpha_t^{[d-1]}\big) \ \ \text{for $t\in(s,0)\cap\QQ$}.
\]
By Lemma~\ref{restrvol1}, we have $\vol_{X|H}\big(\alpha_t^{[d-1]}\big)=\big\|\alpha_t^{[d-1]}\cdot H\big\|$.
By definition of asymptotic intersection numbers, we may take a sequence of blow-ups $\pi_q:X_q\rightarrow X$ realizing $\big\|\alpha_t^{[d-1]}\cdot H\big\|$.
For $t\in(s,0)\cap\QQ$, we have
\[
\Big(\beta_2\cdots\beta_d\Big)_H^{r/(m-r)} \cdot \Big\|\alpha_t^{[d-1]}\cdot H\Big\|=\limsup_{q\rightarrow\infty}\Big(\pi_q^*\beta_2\cdots\pi_q^*\beta_d\Big)_{H_q}^{r/(m-r)} \cdot \Big(\alpha_{t,q}^{[d-1]}\Big)_{H_q},
\]
where $H_q$ is the strict transform of $H$ by $\pi_q$ and $\alpha_{t,q}$ is $1/q$ times the free part of the pullback of $q\alpha_t$ with $q$ sufficiently divisible.

Note that the collection given by $I_j\setminus\{1\}$ with $1\in I_j$ and $I_j$ with $1\notin I_j$ constitutes an $r$-cover of the set $B \setminus \{1\}$. 
By applying our induction hypothesis to the nef classes $\alpha_{t,q}$ and $\pi_q^*\beta_2,\dots,\pi_q^*\beta_d$ on the hypersurface $H_q$, we obtain the upper bound
\begin{multline}\label{induc1}
\Big(\beta_2\cdots\beta_d\Big)_H^{r/(m-r)} \cdot \vol_{X|H}\big(\alpha_t^{[d-1]}\big)\\
\leq\limsup_{q\rightarrow\infty}\prod_{j \in J_1}\Big(\pi_q^*\beta^{I_j\setminus\{1\}}\alpha_{t,q}^{[d-|I_j|]}\Big)_{H_q}^{1/(m-r)} \cdot \prod_{j \in J_2}\Big(\pi_q^*\beta^{I_j}\alpha_{t,q}^{[d-|I_j|-1]}\Big)_{H_q}^{1/(m-r)},
\end{multline}
where $J_1$ is the set of $j \in \llbracket m\rrbracket$ such that $1 \in I_j$, and $J_2$ is the set of  $j \in \llbracket m\rrbracket$ such that $1 \notin I_j$.

We estimate the factors in the two products for $J_1$ and $J_2$ in \eqref{induc1} as follows. For each $\beta_i$ with $i\neq1$, take a very general smooth hypersurface $H_i\in|\beta_i|$.  Set $Y_j \coloneq \bigcap_{i\in I_j\setminus\{1\}}H_i$. As the $H_i$ are taken very generally, $\alpha_t|_{Y_j\cap H}$ is big, and $Y_j\cap H$ is not contained in $\mathbf B_+(\alpha_t)$ for any $t>s$. 
Then, for $t\in(s,0)\cap\QQ$, a factor in the product for $J_1$ satisfies
\[
\limsup_{q\rightarrow\infty}\Big(\pi_q^*\beta^{I_j\setminus\{1\}}\alpha_{t,q}^{[d-|I_j|]}\Big)_{H_q}
=\vol_{X|Y_j\cap H}\big(\alpha_t^{[d-|I_j|]}\big)
\leq\vol_{X|Y_j\cap H}\big(\alpha^{[d-|I_j|]}\big)=\Big(\beta^{I_j}\alpha^{[d-|I_j|]}\Big),
\]
where in the first equality we apply Lemma \ref{restrvol1} and use the fact that the intersection of $\pi_q^*\beta^{I_j\setminus\{1\}}$ with $H_q$ is represented by the strict transform of $Y_j\cap H$. In the middle inequality, we apply the monotonicity of restricted volumes, and in the last equality, we use the fact that $\alpha$ is ample.
For the factors in the product for $J_2$, set $Z_j\coloneq \bigcap_{i\in I_j}H_i$. Then
\begin{equation*}
\limsup_{q\rightarrow\infty}\Big(\pi_q^*\beta^{I_j}\alpha_{t,q}^{[d-|I_j|-1]}\Big)_{H_q}=\vol_{X|Z_j\cap H}\big(\alpha_t^{[d-|I_j|-1]}\big)\leq\vol_{Z_j|Z_j\cap H}\big(\alpha_t^{[d-|I_j|-1]}\big),
\end{equation*}
where the inequality follows from the definition of restricted volumes. Thus, the left-hand side of \eqref{induc1} is bounded above by the product 
\begin{equation*}
\prod_{j \in J_1}\Big(\beta^{I_j}\alpha^{[d-|I_j|]}\Big)^{1/(m-r)}\prod_{j \in J_2}\vol_{Z_j|Z_j\cap H}\big(\alpha_t^{[d-|I_j|-1]}\big)^{1/(m-r)}.
\end{equation*}
By continuity, the same estimate holds for any $t\in(s,0)$. 
Since   $J_2$ has cardinality $m-r$, H\"older's inequality gives
\begin{equation*}
\int_s^0\prod_{j \in J_2}\vol_{Z_j|Z_j\cap H}\big(\alpha_t^{[d-|I_j|-1]}\big)^{1/(m-r)}\,dt\leq\prod_{j \in J_2}\left(\int_s^0\vol_{Z_j|Z_j\cap H}\big(\alpha_t^{[d-|I_j|-1]}\big)\,dt\right)^{1/(m-r)}.
\end{equation*}
By combining \eqref{lhs} and the above two displayed formulas, 
we obtain
\[
\Big(\beta_1\cdots\beta_d\Big)^r\Big(\alpha^{[d]}\Big)^{m-r}\leq\prod_{j \in J_1}\Big(\beta^{I_j}\alpha^{[d-|I_j|]}\Big) \cdot \prod_{j\in J_2}\int_{-t_1}^0\vol_{Z_j|Z_j\cap H}\big(\alpha_t^{[d-|I_j|-1]}\big)\,dt.
\]
To finish the proof, it is enough to verify that, for any $j \in J_2$, we have
\begin{equation*}\label{volrest}
\int_{-t_1}^0\vol_{Z_j|Z_j\cap H}\big(\alpha_t^{[d-|I_j|-1]}\big)\,dt\leq\big(\beta^{I_j}\alpha^{[d-|I_j|]}\big).
\end{equation*}
To this end, note that the restriction of $\alpha_{-t_1}$ to $Z_j$ is pseudoeffective, as $Z_j$ is very general. Denote by  $t_2=r_{\psef,Z_j}(\alpha|_{Z_j},\beta_1|_{Z_j})$ the psef threshold of $\alpha|_{Z_j}$ with respect to $\beta_1|_{Z_j}$ on $Z_j$. Then $t_2\geq t_1$. Using Lemma \ref{restrvol2} on $Z_j$, we get the desired conclusion
\begin{align*}
\Big(\beta^{I_j}\alpha^{[d-|I_j|]}\Big)=\vol_{Z_j}\big(\alpha^{[d-|I_j|]}\big)
&=\int_{-t_2}^0\vol_{Z_j|Z_j\cap H}\big(\alpha_t^{[d-|I_j|-1]}\big)\,dt\\
&\geq\int_{-t_1}^0\vol_{Z_j|Z_j\cap H}\big(\alpha_t^{[d-|I_j|-1]}\big)\,dt. \qedhere
\end{align*}
\end{proof}

This proves Theorem~\ref{thm introduction} for nef divisors on a projective variety when $B$ is a subset of $E$.

\section{Proof of Theorem~\ref{thm introduction}}\label{sec:setmultiset}

In this section, we prove Theorem~\ref{thm introduction}  when $B$ is any multiset on $E$. As before, we work over a fixed algebraically closed field $\K$. 
We recall two operations on polynomials. 
\begin{enumerate}[(1)]\itemsep 2 pt
\item Let $f$ be a polynomial in $n$ variables $x=(x_1,\ldots,x_n)$. We write $f=\sum_A a_A x^A$, where the sum is over the multisets $A$ on $\llbracket n\rrbracket$.
The \emph{upper truncation} of $f$ with respect to a multiset $B$ on $\llbracket n \rrbracket$ is the partial sum 
$f_{\leq B}\coloneq \sum_{A\subseteq B} a_A x^A$.
\item Let $g$ be a polynomial in $n+1$ variables $(y_0,x)=(y_0,x_1,\ldots,x_n)$, and suppose that the degree of $g$ in $y_0$ is at most $q$. The \emph{polarization operator} $\Pi^\uparrow$ with respect to $y_0^q$ is the linear operator defined by its values on monomials 
\[
\Pi^\uparrow\!\left(y_0^a \, x^A\right)
\coloneq
\binom{q}{a}^{-1} e_a(y_1,\dots,y_q) \ x^A,
\]
for all nonnegative $a \le q$ and multisets $A$ on $\llbracket n \rrbracket$, 
where $e_a$ denotes the $a$-th elementary symmetric polynomial in the new variables $(y_1,\ldots,y_q)$. The \emph{polarization} of $g$ with respect to $y_0^q$ is the polynomial $\Pi^\uparrow g$ in variables $(y_1,\ldots,y_q,x_1,\ldots,x_n)$. 
\end{enumerate}
These operations preserve the class of realizable projective volume polynomials over $\K$; see \cite[Corollary~3.3]{LinearOperator} and \cite[Proposition~4.1]{LinearOperator}.

\begin{theorem}\label{thm:truncation}
Suppose that  $f$ is a realizable projective volume polynomial over $\K$.
\begin{enumerate}[(1)]\itemsep 2 pt
\item The upper truncation $f_{\leq B}$ with respect to any multiset $B$ is a realizable projective volume polynomial over $\K$.
\item The polarization $\Pi^\uparrow f$ with respect to any valid power of a variable is a realizable projective volume polynomial over $\K$.
\end{enumerate}
\end{theorem}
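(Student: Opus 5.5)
This theorem is quoted from \cite{LinearOperator}, Corollary~3.3 and Proposition~4.1, so the plan is to reconstruct the geometric constructions behind those results rather than argue formally with polynomials. Throughout, $f=f_\beta$ with $\beta=(\beta_1,\ldots,\beta_n)$ semiample divisors on a $d$-dimensional projective variety $X$. Replacing each $\beta_i$ by a multiple and rescaling the variables keeps us inside the realizable class, so we may assume each $\beta_i$ is base-point free. Each $\beta_i$ then defines a morphism $\phi_i\colon X\to\PP^{N_i}$ with $\beta_i=\phi_i^*\mathcal O(1)$.

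For the \textbf{upper truncation}, the goal is a variety whose volume polynomial kills exactly the monomials $x^A$ with $A(i)>B(i)$ for some $i$. The natural construction is to pull back through linear projections. Let $\psi_i\colon\PP^{N_i}\dashrightarrow\PP^{B(i)}$ be a general linear projection; in the relevant case $B(i)<d$ it is undefined on a general linear center. Resolve these rational maps by taking $\tilde X$ to be the closure of the graph of $(\phi_i,\psi_i\circ\phi_i)_i$. Set $\tilde\beta_i$ to be the pullback of $\mathcal O_{\PP^{B(i)}}(1)$; this is base-point free and satisfies $\tilde\beta_i^{B(i)+1}=0$.

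The key claim is that $\tilde\beta^A$ has the same degree as $\beta^A$ whenever $A\subseteq B$. To prove it, represent $\beta^A$ by intersecting general hyperplane sections pulled back from the $\PP^{B(i)}$. For $A(i)\le B(i)$, these are general members of $|\beta_i|$ whose intersection avoids the indeterminacy locus, which has codimension $B(i)+1$. Hence the degree is unchanged, and a Bertini/moving argument in arbitrary characteristic suffices since the systems are base-point free. Degree considerations then give $f_{\tilde\beta}=f_{\le B}$. This is the step I expect to be the main obstacle: making the genericity and birationality precise over an arbitrary algebraically closed field.

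For the \textbf{polarization} with respect to $y_0^q$, write $g=f_\gamma$ with $\gamma=(\gamma_0,\gamma_1,\ldots,\gamma_n)$, where $\gamma_0$ is base-point free and $\deg_{y_0}g\le q$. First pass to a finite cover or product construction in which $\gamma_0$ splits into $q$ base-point free classes $\delta_1,\ldots,\delta_q$ with $\delta_j^2=0$ and equal mixed behaviour. The model to imitate is $\mathcal O(1)$ on $\PP^q$ pulled back along $(\PP^1)^q\to\PP^q$, the symmetric quotient map of degree $q!$.

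Concretely, take the fibre product $\tilde X=X\times_{\PP^q}(\PP^1)^q$, using $q+1$ general sections of $\gamma_0$ to map $X\to\PP^q$ (possible after reducing $\gamma_0$ via the truncation step). Then compute the degree of $\delta^{S}\gamma^A$ for $S\subseteq\llbracket q\rrbracket$ with $|S|=a$. By the projection formula and the fact that $\delta$-monomials pull back elementary symmetric classes, this equals $\frac{q!}{\binom{q}{a}}\cdot\deg(\gamma_0^a\gamma^A)$ up to the factorial normalization. That matches the coefficients of $\Pi^\uparrow g$ up to a global positive rational constant, which is allowed in the realizable class. Irreducibility of $\tilde X$ is the other technical point; if needed, replace $\tilde X$ by a component dominating $X$ and rescale.
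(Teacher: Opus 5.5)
The paper gives no argument of its own for this statement; it imports both parts from \cite{LinearOperator}. Your truncation half works as a self-contained substitute. The graph closure $\tilde X$ is irreducible, $\tilde\beta_i^{B(i)+1}=0$, and for $A\subseteq B$ the count $\mathrm{codim}\,\phi_k^{-1}(L_k)+\sum_{i\neq k}A(i)=B(k)+1+d-A(k)>d$ shows that the relevant complete intersection on $X$ misses the indeterminacy locus. Meanwhile, $d$ general members of the base-point-free systems on $\tilde X$ miss the exceptional locus, so the local contributions agree. One correction: the divisors you intersect are members of $|\beta_i|$ that contain $\phi_i^{-1}(L_i)$, not general members, though the dimension count still goes through.

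The genuine gap is in the polarization step. The key claim that $\deg(\delta^S\eta^A)$ depends only on $|S|$ comes from the $S_q$-action on $X\times_{\PP^q}(\PP^1)^q$. That action only symmetrizes the fundamental cycle of the fibre product. It gives a statement about a projective variety only if the fibre product is irreducible, and you have not shown this. Your fallback of keeping one dominating component and rescaling fails. $S_q$ permutes the components, but the stabilizer of a single component need not act transitively on $a$-subsets, so that component's intersection numbers can depend on $S$ and not just on $|S|$. For example, take $X=(\PP^1)^q$, $\gamma_0=h_1+\cdots+h_q$, $\gamma_i=h_i$, and let $\phi$ be the quotient map itself. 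The fibre product is then the union of the graphs of the permutations, and on the identity component $\deg(\delta_1\eta_2\cdots\eta_q)=1$ while $\deg(\delta_2\eta_2\cdots\eta_q)=0$. Nor can you add the components back together, because a sum of volume polynomials need not be one: $x_1^2+x_2^2$ is not Lorentzian.

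So irreducibility for a general choice of $X\to\PP^q$ is a missing idea, not a technicality. Here is one way to supply it.
\begin{enumerate}[(1)]
\item Replace $X$ by its normalization and $\phi$ by $g\circ\phi$ with $g\in PGL_{q+1}$ general.
\item The incidence variety $\mathcal F=\{(x,w,g): g\phi(x)=\pi(w)\}$ is smooth over $X\times(\PP^1)^q$ with fibres that are cosets of a parabolic subgroup. Hence $\mathcal F$ is normal and irreducible.
\item If $\dim\phi(X)\ge 1$ (otherwise $\gamma_0\equiv 0$ and there is nothing to prove), the Fulton--Hansen connectedness theorem applied to $X\times(\PP^1)^q\to\PP^q\times\PP^q$ shows that every fibre $F_g$ is connected.
\item Stein factorization of the proper map $\mathcal F\to PGL_{q+1}$ then forces $K(PGL_{q+1})$ to be separably closed in $K(\mathcal F)$. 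Therefore the general $F_g$ is irreducible.
\item The general $F_g$ is generically reduced, since it is \'etale over $X$ away from $(g\phi)^{-1}$ of the discriminant.
\end{enumerate}
Only after this does the $S_q$-action give the symmetry you need. Then $p_*[F_g]=q![X]$ yields your constant $(q-a)!$.
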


It follows from Theorem~\ref{thm:truncation} and continuity that both upper truncation and polarization preserve the class of projective volume polynomials over $\K$. 
We record the following corollary for intersection numbers of semiample $\RR$-divisors on projective varieties.\footnote{By a semiample $\RR$-divisor, we mean an $\RR$-divisor that becomes a semiample $\QQ$-divisor after multiplication by some positive real number. This convention allows a common positive scalar to be absorbed into the representing classes in Corollary~\ref{cor:truncation} and includes the zero class.}

\begin{corollary}\label{cor:truncation}
Let $\delta_0$ be a semiample $\RR$-divisor and let $\gamma=(\gamma_1,\dots,\gamma_n)$ be a collection of semiample $\RR$-divisors on a $d$-dimensional projective variety $X$. For any positive integer $q \le \dim X$, there are  collections of semiample $\RR$-divisors $\delta=(\delta_1,\dots,\delta_q)$ and $\eta=(\eta_1,\ldots,\eta_n)$ on a $d$-dimensional projective variety $Y$ with the following properties:
\begin{enumerate}[(1)]\itemsep 2pt
\item For any multiset $A$ on $\llbracket q \rrbracket$ and multiset $B$ on $\llbracket n \rrbracket$ with $|A|+|B|=d$, we have
\[
\Big(\delta^{A} \, \eta^{B} \Big)=0 \ \ \text{if $A$ is not a subset of $\llbracket q \rrbracket$}.
\]
\item For any $0\leq a\leq q$ and any multiset $B$ on $\llbracket n \rrbracket$ with $a+|B|=d$, we have
\[
\Big(\delta_0^{a} \gamma^{B} \Big)=\Big((\delta_1+\dots+\delta_q)^{a} \, \eta^{B}  \Big),
\]
where the left-hand side is computed in $X$ and the right-hand side is computed in $Y$.
\end{enumerate}
It follows that, for any $0\leq a\leq q$ and any multiset $B$ on $\llbracket n \rrbracket$ with $a+|B|=d$, we have
\[
\Big(\delta_0^{[a]} \gamma^{B} \Big)=\Big((\delta_1+\dots+\delta_q)^{[a]} \, \eta^{B}  \Big)=\Big(e_a(\delta_1,\ldots,\delta_q) \, \eta^{B}  \Big),
\]
where $e_a$ is the $a$-th elementary symmetric polynomial in $q$ variables.
\end{corollary}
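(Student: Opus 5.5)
The plan is to deduce the corollary from Theorem~\ref{thm:truncation} by applying upper truncation and then polarization to the volume polynomial of $(\delta_0,\gamma)$, and reading off intersection numbers as coefficients.

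\emph{Reduction to $\QQ$-divisors.} By the footnote convention, write $\delta_0=c_0\delta_0'$ and $\gamma_i=c_i\gamma_i'$ with $c_i\ge 0$ real and $\delta_0',\gamma_i'$ semiample $\QQ$-divisors. If the corollary holds for $(\delta_0',\gamma')$ with output $(\delta',\eta')$ on $Y$, then $\delta=c_0\delta'$ and $\eta_i=c_i\eta'_i$ work for $(\delta_0,\gamma)$. Indeed, both sides of (1) and (2) are multihomogeneous in the scalings, with matching degrees. So we may assume all divisors are semiample $\QQ$-divisors.

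\emph{Truncation and polarization.} Let $g(y_0,x)=f_{(\delta_0,\gamma)}(y_0,x)$ be the associated volume polynomial on $X$. It is a realizable projective volume polynomial of degree $d$, and
\[
g=\sum_{a+|B|=d}\big(\delta_0^{[a]}\gamma^{[B]}\big)\,y_0^a x^B .
\]
First take the upper truncation of $g$ with respect to the multiset that has multiplicity $q$ at $0$ and multiplicity $d$ at each $i\in\llbracket n\rrbracket$. This deletes exactly the monomials with $a>q$ and leaves all others unchanged. Then apply the polarization $\Pi^\uparrow$ with respect to $y_0^q$. By Theorem~\ref{thm:truncation}, the result
\[
h=\sum_{a\le q,\ a+|B|=d}\binom{q}{a}^{-1}\big(\delta_0^{[a]}\gamma^{[B]}\big)\,e_a(y_1,\dots,y_q)\,x^B
\]
is a realizable projective volume polynomial of degree $d$. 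So $h=c\,f_{(\delta,\eta)}$ for semiample $\QQ$-divisors $\delta,\eta$ on a $d$-dimensional projective variety $Y$ and some rational $c\ge0$. If $h=0$, everything is zero: take $Y=X$ with all divisors zero, which satisfies (1) and (2) trivially since then $\big(\delta_0^{a}\gamma^{B}\big)=0$ for all $a\le q$. Otherwise $c>0$, and we replace each divisor by $c^{1/d}$ times itself (allowed by the $\RR$-convention), so that $h=f_{(\delta,\eta)}$ exactly.

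\emph{Reading off (1) and (2).} The coefficient of $y^Ax^B$ in $f_{(\delta,\eta)}$ is $\big(\delta^{[A]}\eta^{[B]}\big)$. Since $h$ is multilinear in $y_1,\dots,y_q$, these coefficients vanish whenever $A$ is not a set, which gives (1) after multiplying by $A!B!$. For (2), substitute $y_1=\dots=y_q=s$. The polynomial $f_{(\delta,\eta)}(s,\dots,s,x)$ is the volume polynomial of $(\delta_1+\dots+\delta_q,\eta)$, whose $s^ax^B$ coefficient is $\big((\delta_1+\dots+\delta_q)^{[a]}\eta^{[B]}\big)$. On the other side, $e_a(s,\dots,s)=\binom qa s^a$, so the corresponding coefficient of $h$ is $\big(\delta_0^{[a]}\gamma^{[B]}\big)$. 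Multiplying by $B!$ gives (2) together with the first equality of the final display.

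\emph{The last identity.} The identity $\big((\sum\delta_i)^{[a]}\eta^B\big)=\big(e_a(\delta)\eta^B\big)$ follows by expanding $(\sum\delta_i)^a$ multinomially. By (1), all non-squarefree monomials contribute zero, and each squarefree monomial of degree $a$ appears with coefficient $a!$.

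The main point needing care is the normalization: matching the factor $\binom qa^{-1}$ from $\Pi^\uparrow$ against $e_a(s,\dots,s)=\binom qa s^a$, and handling the real scalars in the reduction and the absorption of $c$. Beyond that, the argument is bookkeeping of coefficients.
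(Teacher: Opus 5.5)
Your proposal is correct and follows essentially the same route as the paper: rescale to semiample $\QQ$-divisors, truncate away the terms with $y_0^{a}$ for $a>q$, polarize with respect to $y_0^q$ using Theorem~\ref{thm:truncation}, and read off (1) and (2) from the coefficients of the realizing volume polynomial after absorbing the rational constant as $c^{1/d}$. Your version additionally works out the bookkeeping the paper leaves implicit: matching $\binom{q}{a}^{-1}$ against $e_a(s,\dots,s)=\binom{q}{a}s^a$, and handling the degenerate case $h=0$.
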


\begin{proof}
By rescaling, we may suppose that $\delta_0$ is a semiample $\mathbb{Q}$-divisor and $\gamma$ is a collection of semiample $\mathbb{Q}$-divisors on $X$. Consider the realizable projective volume polynomial
\[
f(y_0,x_1,\dots,x_n)=\frac{1}{d!}\deg_X \left((y_0\delta_0+x_1\gamma_1+\cdots+x_n\gamma_n)^d\right).
\]
 By Theorem \ref{thm:truncation}, truncating $f$ by removing all terms involving $y_0^{q+1}$ or higher exponents, we obtain a realizable projective volume polynomial $g$. Next, we consider the  polarization $\Pi^\uparrow g$ with respect to $y_0^q$, which has $q$ new variables $y_1,\ldots,y_q$. Up to rescaling, the semiample divisors witnessing the fact that $\Pi^\uparrow g$ is a realizable projective volume polynomial satisfy the desired properties.
 \end{proof}

We now prove Theorem~\ref{thm introduction} for nef divisors on projective varieties. Later in this section, we use it to prove Theorem~\ref{thm introduction} for nef $(1,1)$-classes on compact K\"ahler manifolds, extending Corollary~\ref{cor:exponents1intersection}.

\begin{proof}[Proof of Theorem~\ref{thm introduction} for nef divisors on projective varieties]
Since every nef divisor class is the limit of semiample $\QQ$-divisor classes, we may assume that all the divisors $\alpha$ and $\beta_i$ are semiample $\QQ$-divisors. Let $n$ be the cardinality of $E$, and identify $E$ with the set $\llbracket n \rrbracket$. 

We may discard the indices $i$ with $B(i)=0$. We proceed by induction on the cardinality of the set $\mathcal S(B)\coloneq\{i\in \llbracket n \rrbracket:B(i)>1\}$. If $\mathcal S(B)$ is empty, then $B$ is a subset of $\llbracket n \rrbracket$ and the statement to be proved is precisely Proposition~\ref{thrm cover of set}.

Suppose $\mathcal S(B)$ is nonempty. Without loss of generality, we may assume that $n\in\mathcal S(B)$. 
Applying Corollary \ref{cor:truncation} with $q=B(n)$ to the nef divisors
\[
\delta_0\coloneq \beta_n \ \ \text{and} \ \ \gamma=(\gamma_1,\ldots,\gamma_{n-1},\gamma_n)=(\beta_1,\ldots,\beta_{n-1},\alpha),
\]
 we know that there is a projective variety $Y$ and collections of semiample $\RR$-divisors $\delta=(\delta_1,\dots,\delta_q)$ and $\eta=(\eta_1,\dots,\eta_{n-1},\eta_n)$ on $Y$ such that for any $j \in \llbracket m \rrbracket$, we have 
\begin{align}
\Big(\beta^{[B]}\Big)\Big(\alpha^{[d]}\Big)
&=\Big(\Big(\prod_{i=1}^{n-1}\eta_i^{[B(i)]}\Big)
\delta_1\cdots\delta_q\Big)\Big(\eta_n^{[d]}\Big),
\label{eq:polarization-B}\\
\Big(\beta^{[I_j]}\alpha^{[d-|I_j|]}\Big)
&=\Big(\Big(\prod_{i=1}^{n-1}\eta_i^{[I_j(i)]}\Big)
e_{a_j}(\delta_1,\ldots,\delta_q)
\eta_n^{[d-|I_j|]}\Big),
\label{eq:polarization-I}
\end{align}
 where $a_j \coloneq I_j(n) \le q = B(n)$. 
Applying the Arithmetic Mean--Geometric Mean inequality to the right-hand side of \eqref{eq:polarization-I}, we have
\begin{equation}\label{eq:AM-GM}
\frac{1}{\binom{q}{a_j}}\Big(\beta^{[I_j]}\alpha^{[d-|I_j|]}\Big)
\geq \prod_{S}\Big(\Big(\prod_{i=1}^{n-1}\eta_i^{[I_j(i)]}\Big) \, \delta^S \, \eta_n^{[d-|I_j|]}\Big)^{\frac{1}{\binom{q}{a_j}}},
\end{equation}
where the outer product is over all subsets $S$ of $\llbracket q \rrbracket$ of cardinality $a_j$.

The multiset $B$ lifts to a multiset $\underline{B}$ on $\underline{E} \coloneq\{1,\dots,n-1,1^*,\dots, q^*\}$ defined by
\[
\underline{B}(i)=B(i) \ \ \text{for $1\leq i<n$} \ \ \text{and} \ \ \underline{B}(a^*)=1 \ \ \text{for $1\leq a\leq q$}.
\]
Similarly, for any $j \in \llbracket m \rrbracket$ and subset $S$ of $\llbracket q \rrbracket$ of cardinality $a_j$, we define a multiset $\underline{I}_{j,S}$ on $\underline{E}$ by
\[
\underline{I}_{j,S}(i)=I_j(i) \ \ \text{for $1\leq i< n$} \ \ \text{and} \ \ 
\underline{I}_{j,S}(a^*)=\begin{cases}1&\text{if $a\in S$},\\0&\text{if $a\notin S$}.\end{cases}
\]
Let $N$ be the least common multiple of $\binom{q}{a_j}$ for $j \in \llbracket m \rrbracket$, and denote $N/\binom{q}{a_j}$ by $q_j$. We consider the collection of multisets
\[
\underline{I}=\Big(\underline{I}_{j,S}^{\times q_j} \Bigm| \text{$j \in \llbracket m\rrbracket$ and $S$ is a subset of $\llbracket q \rrbracket$ of cardinality $a_j$}\Big),
\]
where each multiset $\underline{I}_{j,S}$ appears $q_j$ times. It is straightforward to check that $\underline{I}$ is an $rN$-cover of $\underline{B}$ by $mN$ multisets. 

Since $\mathcal{S}(\underline{B})=\mathcal{S}(B) \setminus \{n\}$, the induction hypothesis applies to $\underline{I}$ and nef divisors on $Y$:
\[
\binom{\underline{B}}{\underline{I}}\Big(\Big(\prod_{i=1}^{n-1}\eta_i^{[B(i)]}\Big)\delta_1\cdots\delta_q\Big)^{rN}\Big(\eta_n^{[d]}\Big)^{(m-r)N}\\
\leq \prod_{j, S}\Big(\Big(\prod_{i=1}^{n-1}\eta_i^{[I_j(i)]}\Big) \, \delta^S \, \eta_n^{[d-|I_j|]}\Big)^{q_j},
\]
where the outer product on the right-hand side is over all $j \in \llbracket m \rrbracket$ and subsets $S$ of $\llbracket q \rrbracket$ of cardinality $a_j$.
Taking the $N$-th root and combining with the inequality \eqref{eq:AM-GM}, we have
\begin{equation}\label{eq:Nthroot}
\binom{\underline{B}}{\underline{I}}^{\frac{1}{N}}\Big(\Big(\prod_{i=1}^{n-1}\eta_i^{[B(i)]}\Big)\delta_1\cdots\delta_q\Big)^{r}\hspace{-1mm}\Big(\eta_n^{[d]}\Big)^{m-r}
\leq \prod_{j=1}^m \frac{1}{\binom{q}{a_j}} \Big(\beta^{[I_j]}\alpha^{[d-|I_j|]}\Big).
\end{equation}
Notice that the constant on the left-hand side satisfies
\begin{equation*}
\binom{\underline{B}}{\underline{I}}^{\frac{1}{N}}
=\prod_{j,S}\binom{\underline{B}}{\underline{I}_{j, S}}^{\frac{1}{\binom{q}{a_j}}}
=\prod_{j=1}^m\frac{1}{\binom{q}{a_j}}\binom{B}{I_{j}}
=\binom{B}{I} \prod_{j=1}^m\frac{1}{\binom{q}{a_j}}.
\end{equation*}
Thus, using \eqref{eq:polarization-B}, we can simplify \eqref{eq:Nthroot} to the desired inequality
\[
\binom{B}{I}\left(\beta^{[B]}\right)^{r}\left(\alpha^{[d]}\right)^{(m-r)}
\leq \prod_{j=1}^m\left(\beta^{[I_j]}\alpha^{[d-|I_j|]}\right). \qedhere
\]
\end{proof}

\begin{example}\label{ex:rKT violation}
Consider the cubic polynomial in three variables
\[
f=14x_1^3+6x_1^2x_2+24x_1^2x_3+12x_1x_2x_3+6x_1x_3^2+3x_2x_3^2.
\]
Using \cite[Theorem 2.25]{branden2020lorentzian}, one can check that $f$ is a Lorentzian polynomial.
The fact that $f$ is not a projective volume polynomial follows from the reverse Khovanskii--Teissier inequality, a special case of  Theorem~\ref{thm introduction}; see \cite[Section 3.2]{HuhVolume} for details.
\end{example}

\begin{example}\label{ex:new inequality}
Consider the quartic polynomial in five variables
\begin{multline*}
f=
x_0^4+x_0^3(x_1+x_2+x_3+x_4)
+x_0^2\left(\frac{111}{100}x_1x_2+\frac{113}{100}(x_1x_3+x_1x_4+x_2x_3+x_2x_4)+\frac1{100}x_3x_4\right)\\
+x_0\left(\frac{117}{100}(x_1x_2x_3+x_1x_2x_4)+\frac{53}{5000}(x_1x_3x_4+x_2x_3x_4)\right)+\frac{21}{2000}x_1x_2x_3x_4.
\end{multline*}
Using \cite[Theorem 2.25]{branden2020lorentzian}, one can check that $f$ is a Lorentzian polynomial. 
Furthermore, $f$ satisfies the reverse Khovanskii--Teissier inequalities, including those obtained after applying any nonnegative linear change of coordinates; there are $295$ distinct cases to check. 
However, 
\[
(f^2\cdot\partial^{\{1,2,3,4\}}f)(1,0,0,0,0)=\frac{21}{2000}
\ \ \text{and} \ \ 
(\partial^{\{1\}}f\cdot\partial^{\{2\}}f\cdot\partial^{\{3,4\}}f)(1,0,0,0,0)=\frac1{100}.
\]
This contradicts the inequality in Corollary~\ref{thm:intro-volume-polynomials} for the partition of  $\{1,2,3,4\}$ into $\{1\},\{2\},\{3,4\}$, so $f$ cannot be a projective volume polynomial. 
Thus, Theorem~\ref{thm introduction} imposes new constraints on volume polynomials that are not implied by the combination of Lorentzian and reverse Khovanskii--Teissier conditions.
\end{example}

We now prove Corollary~\ref{cor:monomial}.
While the result follows by taking complete intersections and a limit of ample classes, we include its proof from the viewpoint of linear operators preserving volume polynomials.

\begin{proof}[Proof of Corollary~\ref{cor:monomial}]
When $k=0$, the statement is Theorem \ref{thm introduction}. When $k>0$, consider the projective volume polynomial
\[
f(x_0,x_1,\dots,x_n,y_1,\dots,y_k)=\frac{1}{(d+k)!}\deg\left(\left(x_0\alpha+\sum_{i=1}^n x_i\beta_i+\sum_{j=1}^k y_j\gamma_j\right)^{d+k}\right).
\]
By \cite[Corollary~1.4]{LinearOperator}, $\partial_{y_1}\cdots\partial_{y_k}f$ is also a projective volume polynomial. Consider the multiset $U$ on the indices of these variables defined by $U(x_i)=d$ for $0\leq i\leq n$ and $U(y_j)=0$ for $1\leq j\leq k$. 
By Theorem~\ref{thm:truncation} and continuity, the polynomial
\[
g:=
(\partial_{y_1}\cdots\partial_{y_k}f)_{\le U}
=
\left.
\partial_{y_1}\cdots\partial_{y_k}f
\right|_{y_1=\cdots=y_k=0}
\]
is also a projective volume polynomial. Applying Corollary \ref{thm:intro-volume-polynomials} to $g$ at $(1,0,\dots,0)$, we derive the desired inequality. 
\end{proof}

\begin{remark}\label{rmk:covolume}
We note that the key step in Corollary \ref{cor:monomial} used the fact that $\partial_{y_1}\cdots\partial_{y_k}f$ is a projective volume polynomial when $f$ is a projective volume polynomial. This follows from the fact that the monomial $\partial_{y_1}\cdots\partial_{y_k}$ is a covolume polynomial. In general, by \cite[Corollary~1.4]{LinearOperator} we may apply any covolume polynomial to a volume polynomial, and the coefficients of the resulting polynomial still satisfy the inequality of Theorem \ref{thm introduction}. See \cite[Remark~1.8]{LinearOperator} for more details.
\end{remark}

\begin{proof}[Proof of Corollaries~\ref{cor:discriminant main} and ~\ref{cor: mixed volumes}] 
By Corollary~\ref{cor:projective strong}, any non-strict inequality of intersection numbers of nef real divisor classes on smooth projective varieties also applies to mixed volumes of convex bodies and mixed discriminants of positive semidefinite matrices. Thus, Corollary~\ref{cor:monomial} implies Corollaries~\ref{cor:discriminant main} and ~\ref{cor: mixed volumes}.
\end{proof}

We may now deduce Theorem~\ref{thm introduction} for nef $(1,1)$-classes on a compact K\"ahler manifold $X$ of dimension $d$.

\begin{proof}[Proof of Theorem~\ref{thm introduction} for nef $(1,1)$-classes on compact K\"ahler manifolds]
Applying Proposition~\ref{thm:disc=>Kahler} to  Corollary \ref{cor:discriminant main} with $k=0$, we obtain Theorem~\ref{thm introduction} for nef $(1,1)$-classes on compact K\"ahler manifolds.
\end{proof}

\section{Applications}\label{sec:applications}

\subsection{Negative correlation for matroids}\label{sec:matroids}

We use Theorem~\ref{thm introduction} to prove the following extension of Theorem~\ref{thm:intro-basis-partition}. 
Let $M$ be a matroid on a finite set $E$.
For any subset $S \subseteq E$, let $\Pr(S)$ be the probability that a uniformly random basis of $M$ contains $S$.

\begin{theorem}\label{thm:matroid}
Let $M$ be a matroid linearly realizable over a field. 
For any $r$-cover $I=(I_1,\ldots,I_m)$ of a basis $B$ of $M$,
we have
\[
 \Pr(B)^r \leq \Pr(I_1)\Pr(I_2)\cdots\Pr(I_m).
\]
\end{theorem}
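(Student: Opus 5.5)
The plan is to realize the basis-generating polynomial of $M$ as a projective volume polynomial. Then Theorem~\ref{thm introduction}, applied with $B$ a set (so that $\binom{B}{I}=1$ and Proposition~\ref{thrm cover of set} already suffices), will read off exactly the claimed inequality.

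\emph{Construction.} Let $M$ have rank $d$ on $E$, and suppose it is realized over a field. By base change, the realization may be taken over an algebraically closed field $\K$. Realize $M$ as a $d$-dimensional linear subspace $L\subseteq \K^E$, namely the row space of a $d\times E$ representing matrix. Then a subset $S\subseteq E$ with $|S|=d$ is a basis exactly when the coordinate projection $L\to\K^S$ is an isomorphism. Let $X$ be the closure of $L$ in $(\PP^1)^E$. It is a $d$-dimensional projective variety.

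For $e\in E$, let $h_e$ be the pullback to $X$ of the point class under the $e$-th projection $X\to\PP^1$. Each $h_e$ is base-point free, hence nef.

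\emph{Key computation.} For a $d$-subset $S\subseteq E$, we have $(\prod_{e\in S}h_e)=\deg(X\to(\PP^1)^S)$. This degree is $1$ if $S$ is a basis, since the projection restricted to $L$ is then an isomorphism onto $\K^S$ and the map is birational. It is $0$ otherwise, since the image of $L$ is then a proper linear subspace and the map is not dominant. Also $h_e^2=0$, because the point class of $\PP^1$ squares to zero. Hence the volume polynomial of $(h_e)_{e\in E}$ is
\[
\sum_{B'}x^{B'},
\]
the sum running over all bases $B'$ of $M$. This is the step requiring the most care: I must justify that the multidegree of the closure $X$ is computed by generic fibers of coordinate projections, and handle boundary points at infinity. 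A general fiber of $X\to(\PP^1)^S$ over $\K^S$ meets only $L$, so the degree is determined on the open chart. I expect this to be the main obstacle, but it is standard.

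\emph{Applying the theorem.} Take $\beta_e=h_e$ and $\alpha=\sum_{e\in E}h_e$. Since $h_e^2=0$, multinomial expansion gives
\[
\alpha^{[k]}=\sum_{|T|=k}h^T,
\]
summing over $k$-subsets $T\subseteq E$. Therefore, for $S\subseteq B$,
\[
\Big(\beta^{S}\alpha^{[d-|S|]}\Big)=\#\{T:\ S\sqcup T\text{ a basis}\}=N\cdot\Pr(S),
\]
where $N$ is the number of bases. In particular $(\alpha^{[d]})=N$ and $(\beta^B)=1=N\Pr(B)$.

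Theorem~\ref{thm introduction}, for the $r$-cover $I$ of the set $B$, then gives
\[
1^r\cdot N^{m-r}\le\prod_{j=1}^m N\Pr(I_j).
\]
Dividing by $N^m$ yields $N^{-r}=\Pr(B)^r\le\Pr(I_1)\cdots\Pr(I_m)$, which is the claimed inequality.
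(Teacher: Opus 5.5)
Your proof is correct and follows the paper's argument. The paper also takes the closure of a realizing subspace in $(\PP^1)^E$ (the arrangement Schubert variety), uses the pulled-back point classes $\beta_e$ with $\alpha=\sum_e\beta_e$, and applies Theorem~\ref{thm introduction} to the set $B$, where $\binom{B}{I}=1$. The only difference is that you justify $(\prod_{e\in S}\beta_e)\in\{0,1\}$ directly via the degree of the projection $X\to(\PP^1)^S$, whereas the paper cites \cite[Lemma~15]{HSW22}.
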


Since the partitions of $B$ are precisely the $1$-covers of $B$, Theorem~\ref{thm:intro-basis-partition} is the special case $r=1$ of Theorem~\ref{thm:matroid}.

\begin{proof}
Let $d$ be the rank of $M$, and suppose that $M$ is linearly realizable over a field $\K$. Without loss of generality, we may assume that $d$ is positive and $\K$ is algebraically closed. 
Since $M$ is linearly realizable over $\K$, there is a $d$-dimensional  linear subspace $V \subseteq \K^E$ such that a subset $S \subseteq E$ is independent in $M$ if and only if the projection $V \to \K^S$ is surjective.

Let $Y_V$ be the closure of $V$ in $(\PP^1)^E$.
The projective variety $Y_V$, called the \emph{arrangement Schubert variety} of $V$, plays a central role in the theory of intersection cohomology for matroids; see \cite[Section 1.3]{SHCG} and \cite[Section~4]{HuhWang17} for overviews. 
For any $e\in E$, let $\beta_e$ be the nef divisor class on $Y_V$ obtained as the pullback of the class of a point under the composition
\[
Y_V\longrightarrow (\PP^1)^E \longrightarrow \PP^1,
\]
where the first map is the inclusion and the second is the $e$-th projection. Then, by \cite[Lemma~15]{HSW22}, for any $d$-element subset $S\subseteq E$, we have
\[
\left(\prod_{e\in S}\beta_e\right)=\begin{cases}1&\text{when $S$ is a basis of $M$},\\0&\text{otherwise.}\end{cases}
\]
In particular, we have $(\beta^{[B]})=(\beta^{B})=1$.

Let $\alpha$ be the sum of all $\beta_e$ for $e \in E$. Since $\beta_e^2=0$ for any $e \in E$,  the displayed formula for the intersections of $\beta_e$ implies that
\[
(\alpha^{[d]})=(\alpha^d)/d!=(\text{the number of bases of $M$}).
\]
Similarly, for any independent set $S$ of $M$, we have
\[
(\beta^{[S]}\alpha^{[d-|S|]})=(\beta^{S}\alpha^{[d-|S|]})=(\text{the number of bases of $M$ containing $S$}).
\]
Thus, the conclusion follows from Theorem~\ref{thm introduction} applied to $\alpha$ and $\beta=(\beta_e)_{e \in E}$ on $Y_V$.
\end{proof}

As observed in the introduction, the multiple symmetric basis-exchange property implies that the bipartition case of Theorem~\ref{thm:matroid} holds for all matroids.
Another simple special case arises when $d \ge 2$ and $I$ is the $(d-1)$-cover of $B$ consisting of the $d$ sets $B\setminus e$ for $e \in B$.
In this case, Theorem~\ref{thm:matroid} follows from the \emph{bijective basis-exchange property} of Brualdi \cite{Brualdi}:
\begin{quote}
\emph{For any matroid $M$,  any bases $B$ and $B'$ of $M$, there is a bijection $\sigma:B \to B'$ such that $(B \setminus e) \cup \sigma(e)$ is a basis for all $e \in B$.}
\end{quote}
Applying this property to each basis $B'$ of $M$, we obtain an injective map
\[
\big\{\text{bases of $M$}\big\} \hookrightarrow \prod_{e \in B} \big\{\text{bases of $M$ containing $B \setminus e$}\big\} .
\]
Consequently, the inequality of Theorem~\ref{thm:matroid} holds for every matroid $M$ in this case, regardless of whether $M$ is linearly realizable over a field. More generally, the \emph{partition basis-exchange property} of Greene and Magnanti \cite{GreeneMagnanti} implies that the inequality of Theorem~\ref{thm:matroid} holds for any matroid when $r=m-1$.

\begin{conjecture}
The inequality of Theorem \ref{thm:matroid} holds for any matroid. 
\end{conjecture}

More generally, we conjecture that the inequality of Theorem~\ref{thm introduction} holds for any matroid $M$ 
and for any nef divisor classes in the augmented Chow ring of $M$ in the sense of \cite{SS}.

\subsection{Quermassintegrals of projections}

As another application of Theorem~\ref{thm introduction}, we  prove the projection inequality proposed by Fradelizi--Giannopoulos--Meyer in \cite[Remark~3]{fradelizi2003mixed}.
For a convex body $L$ in a $q$-dimensional Euclidean space $F$, we set
\[
V_j(L)=\MV_F\left(L^{\times j},\mathbb B_F^{\times(q-j)}\right) \ \ \text{for} \ \ 0\leq j\leq q,
\]
where $\MV_F$ denotes the mixed volume computed in $F$ and $\mathbb B_F$ is the Euclidean unit ball in $F$. Thus, for example, we have
\[
V_q(L)=\vol_F(L) \ \  \text{and} \ \  V_0(L)=\vol_F(\mathbb B_F).
\]
The case $\ell=0$ of the following theorem recovers \cite[Theorem~4.1]{fradelizi2003mixed}.

\begin{theorem}
Let $K\subseteq\RR^d$ be a convex body with nonempty interior, and let $\pi:\RR^d\to F$ be the orthogonal projection onto a nonzero $p$-dimensional subspace $F \subseteq \RR^d$.
For all integers $0\leq\ell\leq k\leq p$, we have
\[
\binom{d-p+k-\ell}{d-p} \frac{V_{d-k}(K)}{V_{d-\ell}(K)}\geq\frac{V_{p-k}(\pi K)}{V_{p-\ell}(\pi K)}.
\]
\end{theorem}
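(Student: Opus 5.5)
The plan is to reduce the statement to the bipartition (reverse Khovanskii--Teissier) case of Corollary~\ref{cor: mixed volumes}, with auxiliary bodies $Q$, and then translate mixed volumes in $\RR^d$ into mixed volumes in $F$ via the projection formula.

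Let $G=F^\perp$, which has dimension $d-p$. Write $\mathbb B$ for the unit ball of $\RR^d$, and $\mathbb B_F$, $\mathbb B_G$ for the unit balls of $F$ and $G$.

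\textbf{Step 1: the projection formula.} I will use the standard formula \cite[Section 5.3]{Schneider:BM}: for convex bodies $L_1,\dots,L_p\subseteq\RR^d$,
\[
\MV\bigl(L_1,\dots,L_p,\mathbb B_G^{\times(d-p)}\bigr)=c\,\MV_F(\pi L_1,\dots,\pi L_p),
\qquad c=\binom{d}{p}^{-1}\vol_{G}(\mathbb B_G)>0.
\]
The right-hand side depends on the $L_i$ only through their projections $\pi L_i$. Since $\pi\mathbb B=\mathbb B_F$, this gives, for every $0\le j\le p$,
\[
c\,V_{p-j}(\pi K)=\MV\bigl(K^{\times(p-j)},\mathbb B^{\times j},\mathbb B_G^{\times(d-p)}\bigr).
\]
On the other side, $V_{d-j}(K)=\MV(K^{\times(d-j)},\mathbb B^{\times j})$ by definition. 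After multiplying out and cancelling $c$, the desired inequality becomes
\[
\MV\bigl(K^{\times(p-k)},\mathbb B^{\times k},\mathbb B_G^{\times(d-p)}\bigr)\,\MV\bigl(K^{\times(d-\ell)},\mathbb B^{\times \ell}\bigr)
\le \binom{N}{d-p}\MV\bigl(K^{\times(d-k)},\mathbb B^{\times k}\bigr)\,\MV\bigl(K^{\times(p-\ell)},\mathbb B^{\times\ell},\mathbb B_G^{\times(d-p)}\bigr),
\]
where $N=d-p+k-\ell$.

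\textbf{Step 2: apply Corollary~\ref{cor: mixed volumes}.} I take the following data.
\begin{itemize}
\item Auxiliary list $Q=(\mathbb B^{\times\ell},K^{\times(p-k)})$. This uses $d-N$ slots, so the corollary is applied with $N$ in the role of its $d$.
\item Bodies $P_1=\mathbb B$ and $P_2=\mathbb B_G$, with the multiset taking value $k-\ell$ on index $1$ and $d-p$ on index $2$. It has cardinality $N$.
\item The bipartition $1$-cover $I_1=\{1^{(k-\ell)}\}$, $I_2=\{2^{(d-p)}\}$, so $m=2$ and $r=1$. For this cover the constant $\binom{B}{I}$ equals $1$.
\end{itemize}

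Unwinding the divided powers, the factorials $(k-\ell)!\,(d-p)!$ appear on the left for $P^{[B]}$ and once for each factor on the right. The remaining factor $N!$ comes from $K^{[N]}$. Thus the corollary reads
\[
\frac{\MV(\mathbb B^{\times k},\mathbb B_G^{\times(d-p)},K^{\times(p-k)})\,\MV(K^{\times(d-\ell)},\mathbb B^{\times\ell})}{(k-\ell)!\,(d-p)!\,N!}
\le\frac{\MV(\mathbb B^{\times k},K^{\times(d-k)})\,\MV(\mathbb B_G^{\times(d-p)},K^{\times(p-\ell)},\mathbb B^{\times \ell})}{((k-\ell)!\,(d-p)!)^2}.
\]
Multiplying through, this is exactly the reduced inequality of Step 1 with constant $N!/((k-\ell)!\,(d-p)!)=\binom{N}{d-p}$.

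\textbf{Degenerate cases and main obstacle.} If $k=\ell$, the statement is an equality and there is nothing to prove. The nonempty interior of $K$ guarantees that the $V_{d-\ell}(K)$ and $V_{p-\ell}(\pi K)$ we divide by are positive, so the ratios make sense.

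The only delicate point is the bookkeeping in Step 1. One has to see that the ball $\mathbb B_F$ in $F$ may be replaced by the full ball $\mathbb B$ because only projections matter. With that replacement, all four quantities become mixed volumes in $\RR^d$ that share the common block $\mathbb B^{\times\ell}$ and fit the two-part reverse Khovanskii--Teissier pattern. I would double-check the normalization constant $c$ only to confirm that it is the same for both projected terms, so that it cancels.
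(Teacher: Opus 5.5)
Your proposal is correct and follows the paper's proof. Both apply the bipartition case of Corollary~\ref{cor: mixed volumes} with $K$ as the reference body and the same auxiliary entries $\mathbb{B}^{\times\ell}$ and $K^{\times(p-k)}$, then use the projection formula to identify the two mixed volumes carrying the $F^\perp$-part with fixed multiples of $V_{p-k}(\pi K)$ and $V_{p-\ell}(\pi K)$. The only difference is cosmetic: you encode $F^\perp$ by $d-p$ copies of its unit ball $\mathbb{B}_G$, with constant $\binom{d}{p}^{-1}\vol_G(\mathbb{B}_G)$, whereas the paper uses the unit segments $\mathscr{E}_{p+1},\dots,\mathscr{E}_d$, with constant $p!/d!$; in both versions this constant cancels and the divided-power factorials produce $\binom{d-p+k-\ell}{d-p}$.
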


\begin{proof}
The cases $k=\ell$ and $p=d$ are equalities, so suppose that $\ell<k$ and $p<d$. Choose an orthonormal basis $e_1,\dots,e_d$ of $\RR^d$ such that $F$ is spanned by $e_1,\dots,e_p$. 
Let $\mathscr{E}_i$ be the unit segment in the $e_i$-th direction.
Let $\mathbb B$ be the Euclidean unit ball in $\RR^d$ and set
\[
Q_j=\MV\left(\mathscr{E}_{p+1},\dots,\mathscr{E}_d,K^{\times(p-j)},\mathbb B^{\times j}\right) \ \ \text{for $0\leq j\leq p$.}
\]
The mixed-volume formula for orthogonal projections \cite[Section 5.3]{Schneider:BM} gives
\[
Q_j=\frac{p!}{d!}V_{p-j}(\pi K).
\]
Indeed, adding a unit segment as one of the mixed-volume arguments amounts to projecting the remaining arguments  onto its orthogonal complement and dividing by the ambient dimension. Iterating this identity gives the factor $p!/d!$, and $\pi \mathbb B$ is the Euclidean unit ball $\mathbb B_F$.

We apply the bipartition case of Corollary \ref{cor: mixed volumes}, with $K$ as the reference body. The first part consists of the $d-p$ segments $\mathscr{E}_{p+1},\dots,\mathscr{E}_d$, and the second consists of $k-\ell$ copies of $\mathbb{B}$. 
As common auxiliary entries of $V$, insert $\ell$ copies of $\mathbb{B}$ and $p-k$ copies of $K$.
There are $d-p+k-\ell$ nonauxiliary arguments, so the resulting inequality is
\[
\binom{d-p+k-\ell}{d-p}V_{d-k}(K)Q_\ell\geq V_{d-\ell}(K)Q_k.
\]
Substituting the expressions for $Q_\ell$ and $Q_k$ and cancelling $p!/d!$ gives the desired result.
\end{proof}

\providecommand{\MR}[1]{}
\renewcommand{\MR}[1]{}

\bibliographystyle{amsalpha}
\bibliography{biblio}

\end{document}